\begin{document}

\begin{center}

% ======= Title =======
  {\bf{\LARGE{Screening Cut Generation for Sparse Ridge Regression}}}

\vspace*{.2in}

% ======= Author =======
{\large{
\begin{tabular}{ccc}
Haozhe Tan$^{1}$, Guanyi Wang$^{1}$
\end{tabular}
}}
\vspace*{.2in}

\begin{tabular}{c}
$^{1}$Department of Industrial Systems Engineering and Management, \\
National University of Singapore\\

\end{tabular}

\vspace*{.2in}

\today

\vspace*{.2in}

\begin{abstract}
Sparse ridge regression is widely utilized in modern data analysis and machine learning. However, computing globally optimal solutions for sparse ridge regression is challenging, particularly when samples are arbitrarily given or generated under weak modeling assumptions. This paper proposes a novel cut-generation method, Screening Cut Generation (SCG), to eliminate non-optimal solutions for arbitrarily given samples. In contrast to recent safe variable screening approaches, SCG offers superior screening capability by identifying whether a specific $\{\pm 1\}$ combination of multiple features (binaries) lies in the set of optimal solutions. This identification is based on a convex relaxation solution rather than directly solving the original sparse ridge regression. Hence, the cuts generated by SCG can be applied in the pre-processing step of branch-and-bound and its variants to construct safe outer approximations of the optimal solution set. Numerical experiments are reported to validate the theoretical results and demonstrate the efficiency of SCG, particularly in hard real instances and synthetic instances with high dimensions, low ridge regularization parameters, or challenging modeling assumptions. 
\end{abstract}
\end{center}

\section{Introduction} \label{sec:intro}

Sparse ridge regression is a preponderantly used tool for feature selection while ensuring interpretability and generalization in various domains, including biological genetic analysis \citep{ogutu2012genomic}, sparsity-aware learning in compressed sensing \citep{theodoridis2014sparsity}, healthcare diagnostics \citep{gramfort2013time}, and consumer defection prediction for cloud-based software \citep{kuswanto2015logistic}, to name but a few. The problem is formulated as follows:  
\begin{equation}
    \label{eq:original-formulation}
    v^* := \min_{\bm{\beta} \in \mathbb{R}^d} ~ \mathcal{L}(\bm{\beta}) + \gamma \vecnorm{\bm{\beta}}{2}^2 \quad \text{s.t.} \quad \vecnorm{\bm{\beta}}{0} \leq k ~~,  \tag{$\mathcal{P}$} 
\end{equation}
where $\mathcal{L}: \mathbb{R}^d \to \mathbb{R}$ represents a given convex loss, $\gamma > 0$ is a parameter for $\ell_2$-norm ridge regularizer that promotes coefficient shrinkage, and the sparsity constraint $\vecnorm{\bm{\beta}}{0} \leq k$ controls the size of support set $\supp(\bm{\beta}) := \{i \in [\![d]\!] ~|~ \beta_i \neq 0 \}$ by at most $k$ with $\vecnorm{\bm{\beta}}{0} := |\supp(\bm{\beta})| \leq k$. In Mixed-Integer Programming (MIP) community, the sparsity constraint $\vecnorm{\bm{\beta}}{0} \leq k$ can be reformulated by introducing binary variables $\{z_i\}_{i = 1}^d \in \{0,1\}^d$ as $Z^k := \{ (\bm{\beta}, \bm{z}) \in \mathbb{R}^d \times \{0,1\}^d ~|~ \bm{1}^{\top} \bm{z} \leq k, ~ \beta_i (1 - z_i) = 0 ~ \forall ~ i \in [\![d]\!] \}.$

Given the NP-hardness of general sparse ridge regression \citep{chen2019approximation} and its variants, a substantial body of research has focused on developing either inexact (approximate) iterative algorithms (i.e., local search, first-order based algorithm) or exact algorithms based on Branch-and-Bound (BnB) frameworks tailored to the original formulation \eqref{eq:original-formulation}, as detailed in the literature review (see Section~\ref{sec:preliminary-review}). A significant advancement in exact algorithm studies involves using (safe) variable screening approaches in the pre-processing step of BnB and its variants. Specifically, (safe) variable screening aims to determine whether a binary variable is guaranteed to be zero or one in an optimal solution of the original problem \eqref{eq:original-formulation}, based on optimal solutions from convex relaxations rather than solving \eqref{eq:original-formulation} directly. Notably, for general MIPs (also \eqref{eq:original-formulation}), identifying a single binary variable roughly halves the number of feasible binary solutions. Such a reduction yields significant computational benefits, especially for enumeration-based optimization methods (e.g., BnB), since BnB usually scales exponentially with the number of binaries identified or eliminated. \emph{Therefore, the effectiveness of variable screening approaches critically affects the computational efficiency on solving \eqref{eq:original-formulation}.}

While promising results of safe screening techniques \citep{Ata20,Dez22} shed light on the efficiency and potential applicability of existing screening rules, challenges remain in many instances -- particularly synthetic instances with relatively low ridge regularization parameter regime or signal-to-noise ratios, and real instances in machine learning community (see Section~\ref{sec:numerical} for details). In these instances, a significant part of binary variables remain hard to identify using current screening rules in pre-processing step. These observations therefore lead to the following research question:
\begin{center}
    \emph{How and to what extent can convex relaxation solutions be leveraged to enhance screening approaches?}
\end{center}
As a first step to address the above question, this paper introduces a novel cut-generation method, Screening Cut Generation (SCG). By leveraging fractional convex relaxation solutions, SCG safely eliminates non-optimal but feasible solutions for sparse ridge regression. Unlike existing approaches, SCG determines not only single binary variables but also specific $\{\pm 1\}$ combinations of multiple binaries is in the optimal solution set.  

\subsection{Main Contributions \& Paper Organization} 
Our main contributions are given as follows: Section~\ref{sec:preliminary-review} provides a literature review followed by a brief preliminary on existing optimization algorithms and pre-processing techniques for sparse ridge regression. 
Section~\ref{sec:main-results} presents our main contributions. Specifically, Section~\ref{eq:screening-cut-generation} introduces criteria of the proposed novel cut-generation method, Screening Cut Generation (SCG), which enhances existing variable screening approaches in the pre-processing step of BnB. Notably, we demonstrate that convex relaxation solutions can be more effectively utilized to generate screening cuts that eliminate not only single non-optimal binary variables but also specific $\{\pm 1\}$ combinations of multiple binaries that are not part of any optimal solutions, which generalizes existing screening approaches. Section~\ref{sec:SCG-property} explores the theoretical properties of SCG by establishing necessary and sufficient conditions for certifying minimal (dominating) screening cuts. Additionally, Section~\ref{sec:k-knapsack-polytope} examines connections between SCG and the $k$-cardinality constrained binary knapsack polytope, providing an initial step toward investigating the closure of the proposed SCG cuts, which tackles the proposed research question on to what extent SCG cuts could outer approximate the optimal solution set. Section~\ref{sec:implementation-technique} introduces novel selection criteria for SCG cuts by quantifying their potential screening ability to further improve numerical efficiency, which is validated in Section~\ref{sec:numerical}. Numerical experiments are reported in Section~\ref{sec:numerical} to demonstrate the advantages of the proposed SCG compared to existing baselines. 

\subsection{Notation} 
We use lowercase letters, e.g., $x$, boldface lowercase letters, e.g., $\bm{x}$, and boldface uppercase letters, e.g., $\bm{X}$, to represent scalars, vectors, and matrices, respectively. We use $\bm{1}$ to denote a vector of all ones. Given two integers $a, b$ with $a \leq b$, we use $[\![ a,b ]\!]$ to denote the set of consecutive integers from $a$ to $b$, i.e., $[\![ a,b ]\!] := \{a, \ldots, b\}$, and use $[\![b]\!] := \{1, \ldots, b\}$ as a shorthand notation for $[\![1,b]\!]$. Given a vector $\bm{x} \in \mathbb{R}^d$, for any $i \in [\![d]\!]$, we use $(\bm{x})_i$ to denote the \( i \)-th component with $x_i$ a shorthand notation without further explanations, \( (x)_{[k]} \)  or \( x_{[k]} \) to denote the \( k \)-th largest component, and \( \underline{x} := x_{[d]} \) to be the smallest component (i.e., \( \min_{i \in [\![d]\!]} x_i \)) of \( \bm{x} \). Given two vectors $\bm{x}, \bm{y} \in \mathbb{R}^d$, we use $\bm{x} \circ \bm{y} \in \mathbb{R}^d$ to denote the Hadamard product with $(\bm{x} \circ \bm{y})_i = x_i y_i$ for all $i \in [\![d]\!]$. We use \(|\cdot|\) to denote the cardinality of a set and \(\cdot \setminus \cdot\) to denote the set difference. Given an index subset $S \subseteq [\![d]\!]$, we use \( \bm{x}_S \in \mathbb{R}^{|S|} \) as a subvector of \( \bm{x} \) with its entries restricted on index subset \( S \).  

\section{Preliminary \& Literature Review} \label{sec:preliminary-review}

Sparse ridge regression and its optimization methods have been extensively explored over the past decades, and it is beyond the scope of this work to comprehensively review this vast literature. Instead, this section highlights the studies most relevant to our research. Ridge regression was originally introduced by \cite{hoerl1970ridge} to improve the robustness of regression coefficients $\bm{\beta}$. Subsequently, $\ell_1$ regularization (LASSO) and combined $\ell_1$-$\ell_2$ regularization (elastic net) were developed to promote sparsity and robustness in regression coefficients. In recent years, there has been growing interest in solving sparse ridge regression directly, which can be, in general, separated into the following categories.

The first category focuses on designing approximation algorithms for sparse ridge regression under additional statistical assumptions. Examples include greedy and rounding algorithms \citep{Xie20}, dual iterative hard thresholding \citep{Yuan20}, and projected gradient descent \citep{Con19}. \emph{In comparison, this paper aims to solve \eqref{eq:original-formulation} exactly to optimality without relying on any sample generation or modeling assumptions.}

The second category explores MIP-based approaches for sparse ridge regression. Recent advances \citep{Ber16,Park20,Ber20,Gomez21} have demonstrated promising results in solving sparse ridge regression problems with hundreds of variables. Additionally, the state-of-the-art method \citep{Liu24} introduces a customized branch-and-bound (BnB) framework that integrates fast lower-bound estimation and heuristic search, enabling the resolution of instances with thousands of variables. \emph{In contrast, this paper aims to enhance the pre-processing step of BnB by screening binary variables based on convex relaxation solutions.}

The third category studies variable screening approaches used in the pre-processing step, which reduce the searching space of feasible binary solutions based on convex relaxation solutions. For LASSO-type problems, a series of studies \citep{Lau11,Wang13,Liu14,Fer15,Dan21} have introduced \emph{safe} screening rules, while \cite{Tib10} proposes \emph{strong} rules that are computationally efficient but may occasionally exclude optimal solutions. More recently, \cite{Ata20,Dez22} have developed \emph{exact safe} screening rules for sparse ridge regression. We highlight that the convex relaxation solutions used in our work, as well as in \cite{Ata20,Dez22}, are derived by solving the following perspective transformation of the $\ell_2$-norm regularization term, 

\begin{align}
v_{\tt conic} ~ & := \min_{\bm{t}, \bm{\beta}, \bm{z} \in [0,1]^d} \; \mathcal{L}(\bm{\beta}) + \gamma \bm{1}^{\top} \bm{t} \quad \text{s.t.} \quad \bm{1}^{\top} \bm{z} \leq k,~~t_i \geq \frac{\beta_i^2}{z_i}, ~\forall ~ i \in [\![d]\!] \label{eq:conic-relaxation}\\
&= \max_{\bm{p}} \min_{\bm{\beta}, \bm{z} \in[0,1]^d} \; \mathcal{L}(\bm{\beta}) + \gamma \sum_{i=1}^d \left(p_i \beta_i - \frac{p_i^2 z_i}{4}\right) \quad \text{s.t.} \quad \bm{1}^{\top} \bm{z} \leq k \label{eq:fenchel-relaxation}
\end{align}
where \eqref{eq:conic-relaxation} is shown as a tight convex relaxation of \eqref{eq:original-formulation} \citep{Xie20} and has been widely adopted in the literature \citep{Xie20,Ata20,Liu24}. The reformulation \eqref{eq:fenchel-relaxation} is obtained by taking the Fenchel conjugate on $\beta_i^2 / z_i$, then exchanging the minimization and maximization due to strong duality. Using convex relaxation solutions from solving \eqref{eq:fenchel-relaxation}, \cite{Ata20} show that:

\begin{proposition}[Restatement, Safe Screening Rules from \cite{Ata20}\footnote{We realized that a recent work ``\emph{Logic Rules and Chordal Graphs for Sparse Learning}'' by A. Deza, A. G\'{o}mez, A. Atamt\"{u}rk proposed a refined safe screening rule using techniques about chordal graphs. However, to the best of our knowledge, there is only one presentation slide for the International Symposium on Mathematical Programming (ISMP 2024) without any accessible preprint online. Additionally, based on their slides, we believe our main result (see Theorem~\ref{thm:generalized-cuts-generation-rule} below) is distinct from their main contributions by taking multiple binaries into consideration, and proposes a SCG framework with both theoretical and practical advantages by selecting ``strong'' screening cuts in the pre-processing step. 
}]
    \label{prop:AA-screen-rule}
    Denote the optimal solution of (\ref{eq:fenchel-relaxation}) as $\hat{\bm{\beta}}$ and $\hat{\bm{p}}$. Assume there is no tie in element-wise square of $\hat{\bm{p}}$, i.e. $\hat{\bm{w}}:=\hat{\bm{p}} \circ \hat{\bm{p}}$. Let $v_{\tt ub}$ be any upper bound of $v^*$, usually obtained by any feasible solutions of \eqref{eq:original-formulation}. Then, for any $j \in [\![d]\!]$, the following safe screening rule 
    \begin{equation*}
        z_j = \left\{
        \begin{array}{lll}
            0, & \text{if} \quad \gamma \frac{\hat{w}_j}{4} \leq \gamma \frac{\hat{w}_{[k+1]}}{4} \quad 
             \text{and} \quad \gamma \frac{\hat{w}_{[k]}}{4} - \gamma \frac{\hat{w}_{j}}{4} > \gap \\
            1, & \text{if} \quad \gamma \frac{\hat{w}_{j}}{4} \geq \gamma \frac{\hat{w}_{[k]}}{4} 
             \quad \text{and} \quad -\gamma \frac{\hat{w}_{[k+1]}}{4} + \gamma \frac{\hat{w}_{j}}{4} > \gap
        \end{array}
    \right. ~~ 
\end{equation*} 
identifies whether a binary $z_i$ could be zero or one in optimal solution sets with $\gap := v_{\tt ub} - v_{\tt conic}$. 
\end{proposition}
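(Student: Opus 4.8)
The plan is to certify each screening assertion by lower-bounding a \emph{restricted} version of \eqref{eq:original-formulation}. Write $v^*_{z_j=1}$ (resp.\ $v^*_{z_j=0}$) for the optimal value of \eqref{eq:original-formulation} with the auxiliary binary $z_j$ fixed to $1$ (resp.\ to $0$, which forces $\beta_j=0$). It will be enough to prove: (a) when the ``$z_j=0$'' condition holds, $v^*_{z_j=1} > v_{\tt ub}\ge v^*$, so that $\beta^*_j=0$ in every optimal $\bm{\beta}^*$ (otherwise $j\in\supp(\bm{\beta}^*)$ would force $z_j=1$, contradicting $v^*_{z_j=1}>v^*$), hence $z_j=0$ may be fixed safely; and (b) when the ``$z_j=1$'' condition holds, $v^*_{z_j=0} > v_{\tt ub}\ge v^*$, so $\beta^*_j\neq 0$, i.e.\ $z_j=1$, in every optimal solution. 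On $Z^k$ the ridge term equals $\gamma\sum_i \beta_i^2/z_i$ (with $0/0:=0$), and the elementary conjugate inequality $\beta_i^2/z_i \ge p_i\beta_i - p_i^2 z_i/4$, valid for all $p_i$ and $z_i\in[0,1]$ (with the usual conventions at $z_i=0$), is what will let me pass from a restricted problem to a computable lower bound by substituting the \emph{specific} vector $\bm{p}=\hat{\bm{p}}$.

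The core computation is as follows. For any $\bm{p}$ set $h(\bm{p}):=\inf_{\bm{\beta}}\{\mathcal{L}(\bm{\beta})+\gamma\,\bm{p}^{\top}\bm{\beta}\}$. For a fixed $\bm{p}$ the map $\mathcal{L}(\bm{\beta})+\gamma\sum_i(p_i\beta_i - p_i^2 z_i/4)$ decouples: minimizing over $\bm{\beta}$ gives $h(\bm{p})$, and minimizing over $\bm{z}$ in the fractional $k$-cardinality box $\{\bm{1}^{\top}\bm{z}\le k,\ \bm{z}\in[0,1]^d\}$ gives $-\tfrac{\gamma}{4}$ times $\max\{\sum_i p_i^2 z_i:\bm{1}^{\top}\bm{z}\le k,\ \bm{z}\in[0,1]^d\}$, which (all $p_i^2\ge0$) is attained by putting $z_i=1$ on the $k$ coordinates with largest $p_i^2$. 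Since $\hat{\bm{p}}$ is an optimal solution of \eqref{eq:fenchel-relaxation} and \eqref{eq:fenchel-relaxation}$\,=v_{\tt conic}$, this gives the closed form $v_{\tt conic} = h(\hat{\bm{p}}) - \tfrac{\gamma}{4}\sum_{i=1}^{k}\hat{w}_{[i]}$, with $\hat{w}_i=\hat{p}_i^2$; this identity is the only place strong duality \eqref{eq:conic-relaxation}$\,=$\eqref{eq:fenchel-relaxation} is invoked.

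Next I would bound the two restricted problems. Appending $z_j=1$ to \eqref{eq:conic-relaxation} keeps it a lower bound on $v^*_{z_j=1}$; applying the conjugate inequality with $\bm{p}=\hat{\bm{p}}$ and repeating the decoupling (the $\bm{\beta}$-minimization is still unconstrained, hence $=h(\hat{\bm{p}})$), the $\bm{z}$-part becomes $-\tfrac{\gamma}{4}\max\{\sum_i\hat{w}_iz_i:\bm{1}^{\top}\bm{z}\le k,\ \bm{z}\in[0,1]^d,\ z_j=1\}$. Under the hypothesis $\hat{w}_j\le \hat{w}_{[k+1]}$ and the no-tie assumption, $j$ is \emph{not} among the $k$ largest coordinates of $\hat{\bm{w}}$, so this constrained knapsack sets $z_j=1$ and fills the rest of the budget with the top $k-1$ coordinates, value $\hat{w}_j+\sum_{i=1}^{k-1}\hat{w}_{[i]}$. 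Hence $v^*_{z_j=1}\ge h(\hat{\bm{p}})-\tfrac{\gamma}{4}\big(\hat{w}_j+\sum_{i=1}^{k-1}\hat{w}_{[i]}\big) = v_{\tt conic}+\tfrac{\gamma}{4}\big(\hat{w}_{[k]}-\hat{w}_j\big)$, and $\tfrac{\gamma}{4}(\hat{w}_{[k]}-\hat{w}_j)>\gap=v_{\tt ub}-v_{\tt conic}$ yields (a). Symmetrically, appending $z_j=0$ forces $\beta_j=0$ (from $\beta_j^2/z_j\le t_j<\infty$), so the $\bm{\beta}$-minimization over $\{\beta_j=0\}$ is $\ge h(\hat{\bm{p}})$ while the $\bm{z}$-knapsack over $\{z_j=0\}$ selects the $k$ largest coordinates \emph{other than} $j$; under $\hat{w}_j\ge \hat{w}_{[k]}$ and no ties, $j$ \emph{is} among the top $k$, so this value is $\sum_{i=1}^{k}\hat{w}_{[i]}-\hat{w}_j+\hat{w}_{[k+1]}$, whence $v^*_{z_j=0}\ge v_{\tt conic}+\tfrac{\gamma}{4}\big(\hat{w}_j-\hat{w}_{[k+1]}\big)$, and $\tfrac{\gamma}{4}(\hat{w}_j-\hat{w}_{[k+1]})>\gap$ yields (b).

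The main obstacle is the combinatorial bookkeeping of which $k$-subset the fractional knapsack picks once $j$ is forced in or out; this is exactly where the no-tie hypothesis on $\hat{\bm{w}}$ is essential, since it makes ``$\hat{w}_j\le \hat{w}_{[k+1]}$'' equivalent to ``$j$ lies outside the (unique) set of $k$ largest coordinates'' and ``$\hat{w}_j\ge \hat{w}_{[k]}$'' equivalent to ``$j$ lies inside it'', so that the change in knapsack value is exactly $\hat{w}_{[k]}-\hat{w}_j$ (resp.\ $\hat{w}_j-\hat{w}_{[k+1]}$). A secondary point needing care is directional consistency: the relaxation of the restricted problem must inherit the fixed value of the \emph{binary} $z_j$, not merely of $\beta_j$ — appending only $\beta_j=0$ would leave the $\bm{z}$-knapsack free to still select $j$ and would give no strict improvement over $v_{\tt conic}$ — together with the routine check that a strict gap in the restricted optimum does translate into the stated property of $z_j$ in \emph{every} optimal solution.
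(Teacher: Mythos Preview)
Your proposal is correct and follows essentially the same argument the paper uses: the paper proves Theorem~\ref{thm:generalized-cuts-generation-rule} by fixing $z_i=1$ for $i\in S$, $z_i=0$ for $i\in N$, plugging $\hat{\bm{p}}$ into the Fenchel relaxation~\eqref{eq:fenchel-relaxation}, decoupling into the unconstrained $\bm{\beta}$-part (yielding what you call $h(\hat{\bm{p}})$) and the constrained top-$k$ knapsack in $\bm{z}$, and comparing the resulting lower bound to $v_{\tt ub}$; Proposition~\ref{prop:AA-screen-rule} is then recovered (Remark~\ref{rmk:AA_rule}) as the special cases $(S,N)=(\{j\},\emptyset)$ and $(S,N)=(\emptyset,\{j\})$, which is exactly your direct computation. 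The only cosmetic difference is that the paper works throughout in the Fenchel form where $\bm{\beta}$ is already unconstrained, whereas you retain $\beta_j=0$ in case~(b) and invoke the one-sided inequality $\min_{\beta_j=0}\{\cdot\}\ge h(\hat{\bm{p}})$; both routes give the same bound.
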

To be concise, in later discussions, we use \emph{comparison criteria} to denote the first if condition, i.e., $\gamma \frac{\hat{w}_j}{4} \leq \gamma \frac{\hat{w}_{[k+1]}}{4}$ or $\gamma \frac{\hat{w}_{j}}{4} \geq \gamma \frac{\hat{w}_{[k]}}{4}$, respectively; and \emph{reduced-cost criteria} to denote the second if condition, i.e., $\gamma \frac{\hat{w}_{[k]}}{4} - \gamma \frac{\hat{w}_{j}}{4} > \gap$ or $-\gamma \frac{\hat{w}_{[k+1]}}{4} + \gamma \frac{\hat{w}_{j}}{4} > \gap$, respectively. We call the second if condition as \emph{reduced-cost criteria} since its left-hand-side term plays a similar role as the reduced-cost (see discussions after Theorem~\ref{thm:generalized-cuts-generation-rule} for details). As we can observe, the above safe-screening rules have a strong dependence on the $\ell_2$-norm regularization parameter $\gamma$ and the gap between upper and lower bounds of \eqref{eq:original-formulation}. Notably, for sparse ridge regression, its upper bound $v_{\tt ub}$ is obtained by any feasible solution of \eqref{eq:original-formulation} and its lower bound $v_{\tt conic}$ is computed from solving the conic relaxation \eqref{eq:conic-relaxation}, the corresponding $\gap$ is therefore lower bounded by the integrality gap between original \eqref{eq:original-formulation} and corresponding relaxation \eqref{eq:conic-relaxation}.

\section{Main Results} \label{sec:main-results}

\subsection{Screening Cut Generation Rule} \label{eq:screening-cut-generation}

Recall screening rules in Proposition~\ref{prop:AA-screen-rule}, the $j$-th binary variable $z_j$ is identified if both comparison and reduced-cost criteria are satisfied. Such a screening rule will be weakened/violated under the scenarios when regularization parameter $\gamma$ or the differences between $\hat{w}_j, \hat{w}_{[k]}, \hat{w}_{[k + 1]}$ are small (see Section~\ref{sec:numerical} for detailed scenario settings). To overcome such scenarios, Theorem~\ref{thm:generalized-cuts-generation-rule} proposes the SCG rule that greatly enhances the reduced-cost criteria in Proposition~\ref{prop:AA-screen-rule} by taking into account a specific subset of binaries, other than a single binary variable.

Before presenting detailed results in Theorem~\ref{thm:generalized-cuts-generation-rule}, let us start with the definition of SCG-tuple as follows: Recall $\hat{\bm{\beta}}$ the optimal solution of \eqref{eq:conic-relaxation}, $\hat{\bm{p}}$ the Fenchel dual variable obtained from \eqref{eq:fenchel-relaxation} given $\hat{\bm{\beta}}$. Let $v_{\tt ub}$ and $v_{\tt conic}$ be an upper and lower (optimal value from \eqref{eq:fenchel-relaxation}) bounds of \eqref{eq:original-formulation}, respectively.   

\begin{definition}[\textbf{SCG-tuple $(S, N, C)$}] \label{def:SCG-tuple}
    Given two disjoint index subsets $S, N \subseteq [\! [d]\!]$ with $|S| \leq k$, let $R$ be the remaining set of their union, i.e. $R := [\! [d]\!] \setminus (S \cup N)$. For any index set $C \subseteq R$, we say $(S,N,C)$ is a \emph{SCG-tuple} with respect to $\hat{\bm{w}} = \hat{\bm{p}} \circ \hat{\bm{p}}$, if index set $C$ is constructed as follows: $$C := \left\{ i \in R ~|~ \hat{w}_i \geq (\hat{w}_R)_{[c]} \right\}$$ with $c := \min \{k - |S|, ~ |R|\}$. Specifically, when $c = 0$, we set $C := \emptyset$. In the later of this paper, for a SCG-tuple $(S, N, C)$, we call index set $S$ as the \emph{candidate set of support}, index set $N$ as the \emph{candidate set of non-support}, and index set $C$ as the  \emph{complement set of support} to $S$, which is established based on $S, N$ and $\hat{\bm{w}}$. 
\end{definition}
Intuitively, given a SCG-tuple $(S, N, C)$, set $S$ represents the set of indices that are believed to become support, $N$ represents the set of indices that are believed not to become support, while set $C$ is fully determined by sets $S$ and $R$ as the ``best'' possible set based on $\hat{\bm{w}}$ that, combining with $S$, makes-up a $k$-sparse support set for \eqref{eq:original-formulation}. We are poised to present the theorem for Screening Cut Generation (SCG).

\begin{restatable}[\textbf{Screening Cuts Generation}]{theorem}{optcutgen}
\label{thm:generalized-cuts-generation-rule} 
Suppose components in $\hat{\bm{w}} = \hat{\bm{p}} \circ \hat{\bm{p}}$ are distinct. Given a SCG-tuple $(S, N, C)$ with respect to $\hat{\bm{w}}$, if the inequality of reduced-costs 
\begin{equation}
    \label{ineq:cuts-condition}
    \sum_{i=1}^k \hat{w}_{[i]} - \sum_{i \in S} \hat{w}_i - \sum_{i \in C} \hat{w}_i > \frac{4}{\gamma} \gap 
\end{equation} 
holds, then the following screening cut $$\sum_{i \in S}z_i + \sum_{i \in N} (1 - z_i) \leq |S| + |N| - 1$$
eliminates some non-optimal but remains all optimal solutions for \eqref{eq:original-formulation}. Additionally, we call a SCG-tuple $(S,N,C)$ a \emph{valid SCG-tuple} if $(S,N,C)$ also ensures the above inequality~\eqref{ineq:cuts-condition}. 
\end{restatable}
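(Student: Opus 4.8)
The plan is to argue by contradiction: suppose the screening cut is invalid, i.e., there exists an optimal solution $\bm{\beta}^*$ of \eqref{eq:original-formulation} whose support $S^* := \supp(\bm{\beta}^*)$ satisfies $S \subseteq S^*$ and $N \cap S^* = \emptyset$ (this is precisely the negation of $\sum_{i \in S} z_i + \sum_{i \in N}(1 - z_i) \leq |S| + |N| - 1$ when $\bm{z}$ is the incidence vector of $S^*$). I will then show that the objective value attained on such a support is strictly larger than $v_{\tt ub} \geq v^*$, contradicting optimality.

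The key is a lower bound on the restricted problem. Fix any support set $T \subseteq [\![d]\!]$ with $|T| \leq k$. Using the Fenchel/dual form \eqref{eq:fenchel-relaxation} with the fixed dual certificate $\hat{\bm{p}}$ (so that $\mathcal{L}(\bm{\beta}) \geq \mathcal{L}(\hat{\bm{\beta}}) + \sum_i \hat{p}_i(\beta_i - \hat{\beta}_i) + \tfrac{\gamma}{?}\dots$ — more precisely, weak duality in \eqref{eq:fenchel-relaxation} gives, for any $\bm{\beta}$ supported on $T$ and any feasible $\bm{z}$,
\begin{equation*}
\mathcal{L}(\bm{\beta}) + \gamma \vecnorm{\bm{\beta}}{2}^2 \;\geq\; \min_{\bm{\beta}', \bm{z}'\in[0,1]^d,\ \bm{1}^\top\bm{z}'\le k} \mathcal{L}(\bm{\beta}') + \gamma\sum_{i=1}^d\Big(\hat{p}_i\beta_i' - \tfrac{\hat{p}_i^2 z_i'}{4}\Big),
\end{equation*}
and when $\bm{\beta}$ is constrained to be supported on $T$ one further restricts $\bm{z}'$ to be supported on $T$, which forces $z_i' = 0$ for $i\notin T$. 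Evaluating the resulting minimization, the "$-\tfrac{\gamma}{4}\hat{p}_i^2 z_i'$" terms are minimized by putting the $z'$-mass on the $\min\{k,|T|\}$ largest values of $\hat{w}_i = \hat{p}_i^2$ among $i\in T$; comparing to the unrestricted optimum $v_{\tt conic}$, which puts mass on the $k$ largest $\hat w_i$ over all of $[\![d]\!]$, yields
\begin{equation*}
\min_{\bm{\beta}:\ \supp(\bm{\beta})\subseteq T} \ \mathcal{L}(\bm{\beta}) + \gamma\vecnorm{\bm{\beta}}{2}^2 \;\geq\; v_{\tt conic} + \frac{\gamma}{4}\Big(\sum_{i=1}^{k}\hat w_{[i]} - \sum_{i \in \text{(top }\min\{k,|T|\}\text{ of }\hat w\text{ on }T)} \hat w_i\Big).
\end{equation*}

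Now specialize $T = S^*$. Since $S\subseteq S^*$ and $|S^*|\le k$, the set $R\cap S^*$ has size at most $k-|S|$, so the top-$\min\{k,|S^*|\}$ indices of $\hat{w}$ within $S^*$ all lie in $S\cup(R\cap S^*)$, and the $\hat w$-sum over them is at most $\sum_{i\in S}\hat w_i + \sum_{i\in C}\hat w_i$ by the extremal definition of $C$ in Definition~\ref{def:SCG-tuple} (namely $C$ collects the $c=\min\{k-|S|,|R|\}$ largest $\hat w_i$ over $R$, which dominates any other admissible choice of $\le k-|S|$ indices from $R$, hence in particular from $R\cap S^*$). Substituting gives
\begin{equation*}
v^* \;=\; \mathcal{L}(\bm{\beta}^*) + \gamma\vecnorm{\bm{\beta}^*}{2}^2 \;\geq\; v_{\tt conic} + \frac{\gamma}{4}\Big(\sum_{i=1}^k \hat w_{[i]} - \sum_{i\in S}\hat w_i - \sum_{i\in C}\hat w_i\Big) \;>\; v_{\tt conic} + \gap \;=\; v_{\tt ub},
\end{equation*}
where the strict inequality is exactly hypothesis \eqref{ineq:cuts-condition}. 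This contradicts $v^* \le v_{\tt ub}$, so no optimal solution can have a support containing $S$ and avoiding $N$; the cut is valid. That it eliminates \emph{some} feasible solution is immediate since, e.g., any $\bm{z}$ with $S\subseteq\supp$, $\supp\cap N=\emptyset$, $|\supp|\le k$ violates it (such $\bm{z}$ exists because $|S|\le k$).

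The main obstacle is making the restricted lower bound rigorous: one must correctly handle the perspective constraint $t_i \geq \beta_i^2/z_i$ when $z_i = 0$ forces $\beta_i = 0$, and verify that the "fixed-$\hat{\bm{p}}$" weak-duality bound, minimized over $\bm{\beta}'$ and $\bm{z}'$ \emph{restricted to support $T$}, indeed evaluates to $v_{\tt conic}$ minus exactly the top-$k$-versus-top-on-$T$ discrepancy in the $\tfrac{\gamma}{4}\hat w$ terms — the $\mathcal{L}$-part and the linear $\hat p_i\beta_i'$-part must be shown to contribute the same quantity in both the restricted and unrestricted problems (they do, because the unconstrained minimizer of $\mathcal{L}(\bm\beta')+\gamma\sum \hat p_i\beta_i'$ in $\bm\beta'$ is unaffected by which $z_i'$ are active, as $\bm z'$ only enters the separable $-\tfrac{\gamma}{4}\hat p_i^2 z_i'$ term). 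Once that bookkeeping is pinned down, the combinatorial step — that $C$ is the $\hat w$-maximal completion of $S$ to a $k$-set — is exactly Definition~\ref{def:SCG-tuple}, and the rest is the displayed chain of inequalities.
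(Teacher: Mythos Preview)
Your proposal is correct and follows essentially the same approach as the paper: both arguments plug the fixed dual certificate $\hat{\bm{p}}$ into the Fenchel form, separate the objective into a $\bm{\beta}$-part (whose unconstrained minimizer is $\hat{\bm{\beta}}$) and a $\bm{z}$-part, and compare the restricted optimal $\bm{z}$-allocation to the unrestricted top-$k$ allocation to produce the displayed lower bound $v_{\tt conic} + \tfrac{\gamma}{4}\bigl(\sum_{i=1}^k \hat w_{[i]} - \sum_{i\in S\cup C}\hat w_i\bigr)$. The paper organizes this as a direct argument on the restricted problem $v_{S,N}$ (with $z_i=1$ on $S$, $z_i=0$ on $N$) and splits into the cases $|N|>d-k$ and $|N|\le d-k$ to identify the optimal $\bm z$, whereas your contradiction argument on a single hypothetical optimal support $T=S^*$ handles both cases uniformly via the extremal definition of $C$; this is a mild presentational difference, not a substantive one.
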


The proof of Theorem~\ref{thm:generalized-cuts-generation-rule} is presented in Appendix~\ref{app:prop-generalized-cuts-generation-rule}.One can relax the assumption ``components in $\hat{\bm{w}} = \hat{\bm{p}} \circ \hat{\bm{p}}$ are distinct'' by breaking ties lexicographically. To simplify our analysis, in the latter of this paper, \emph{we always assume that there is no tie in $\hat{\bm{w}}$ without special instructions.}   

To be concise, Theorem~\ref{thm:generalized-cuts-generation-rule} provides a generalized screening rule that identifies whether a specific $\{\pm 1\}$ combination of binaries, i.e., $z_i = 1$ for $i \in S$ and $z_i = 0$ for $i \in N$, is in one of optimal solutions or not for the original sparse ridge regression \eqref{eq:original-formulation}. The validness of the corresponding SCG-tuple $(S, N, C)$ is determined by the inequality \eqref{ineq:cuts-condition}, which is a linear underestimate of Lagrangian relaxation constrained by candidate sets $S$ and $N$ given $\hat{\bm{p}}$, 
\begin{align*}
     \begin{array}{rlll}
        \max_{\bm{p}}\min_{\bm{\beta}, \bm{z}} & \mathcal{L}(\bm{\beta}) + \gamma \sum_{i=1}^d ({p}_i\beta_i - \frac{{p}_i^2}{4}z_i) \\
        \text{s.t.} & \bm{1}^{\top} \bm{z} \leq k, ~ \bm{z} \in [0, 1]^d \\
        & z_i = 1 ~ \forall ~ i \in S, ~~ z_i = 0 ~ \forall ~ i \in N
    \end{array} 
    \geq & ~ v_{\tt conic} + \frac{\gamma}{4}\left(\sum_{i=1}^k \hat{w}_{[i]}  - \sum_{i \in S} \hat{w}_i - \sum_{i \in C} \hat{w}_i \right) > v_{\tt ub} \\
    \Leftrightarrow & ~~ \text{inequality~\eqref{ineq:cuts-condition}} ~~. 
\end{align*}
That is to say, if inequality~\eqref{ineq:cuts-condition} holds, the corresponding choice of support subset $S$ and non-support subset $N$ will not be included in any of the optimal solutions of (\ref{eq:original-formulation}). To further enhance the validity or effectiveness of inequality~\eqref{ineq:cuts-condition}, in addition to increasing the term on the left-hand-side by taking multiple binaries into account, it is reasonable to reduce the right-hand-side term $4 \gap / \gamma$. Notably, Theorem~\ref{thm:generalized-cuts-generation-rule} does not specify which upper bounds $v_{\tt ub}$ of \eqref{eq:original-formulation} are used in SCG. Indeed, any upper bounds are allowed, and \textit{a tighter relaxation or a method for improved primal feasible solutions will lead to a smaller gap, thereby enhancing the effectiveness of the proposed screening criterion.} However, in some real applications, the ridge regularization parameter $\gamma$ lies in a relatively low regime, which weakens the effectiveness of Inequality~\eqref{ineq:cuts-condition}. One possible direction (also discussed in Section~\ref{sec:conclusion}) to handle low/zero $\gamma$ regime is to study the following sequence of optimal solutions $\{\hat{\boldsymbol{\beta}}_i\}_{i \geq 1}$ and their support sets obtained from a sequence of auxiliary sparse ridge regressions 
\begin{align*}
    \hat{\boldsymbol{\beta}}_i := \argmin_{\bm{\beta}} ~ \mathcal{L}(\boldsymbol{\beta}) + \gamma_i \|\boldsymbol{\beta}\|_2^2 ~~ \text{s.t.} ~~ \|\boldsymbol{\beta}\|_0 \leq k ~~, 
\end{align*}
where $\{\gamma_i\}_{i \geq 1}$ is a sequence of monotonically decreasing positive reals that tends to zero. Some preliminary numerical results (will be investigated as future directions, also mentioned in Section~\ref{sec:conclusion}) indicate that in many cases, the supports of the solutions $\{\hat{\boldsymbol{\beta}}_i\}_{i \geq 1}$ remain unchanged for sufficiently small $\gamma_i$. \textit{This stability on support suggests that the screening cuts\footnote{These screening cuts may not be the safe screening cuts that only eliminates non-optimal solutions.} computed from sparse ridge regression with small $\gamma_i$ could extend to the general convex sparse regression setting.}

\begin{remark} \label{rmk:AA_rule}
Compared to existing screening rules (Proposition~\ref{prop:AA-screen-rule}), since the left-hand-side term of inequality \eqref{ineq:cuts-condition} involves multiple binaries other than a single binary, the reduced-cost criteria for SCG are more likely to ensure the validity than the reduced-cost criteria in Proposition~\ref{prop:AA-screen-rule}. Moreover, let $\texttt{Top}_k(\hat{\bm{w}}):=\{i \in [\![d]\!] ~|~\hat{w}_i \geq \hat{w}_{[k]} \}$ be the set of indices for the top-$k$ components. By setting SCG-tuple as 

\begin{align*}
    (S, N, C) = & ~\left\{
    \begin{array}{lll}
        \left( \{i\}, ~ \emptyset, ~ \texttt{Top}_k(\hat{\bm{w}}) \setminus \{i\} \right) & ~\text{if}~~~ i \in \texttt{Top}_k(\hat{\bm{w}}) \\
        \left( \{i\}, ~ \emptyset, ~ \texttt{Top}_{k-1}(\hat{\bm{w}}) \right) & ~ \text{otherwise}
    \end{array}
    \right. \\
    ~~\text{or} ~ & ~\left\{
    \begin{array}{lll}
        \left( \emptyset, ~ \{i\}, ~  \texttt{Top}_{k+1}(\hat{\bm{w}}) \setminus \{i\} \right) & ~\text{if}~~~ i \in \texttt{Top}_k(\hat{\bm{w}})\\
        \left( \emptyset, ~ \{i\}, ~  \texttt{Top}_{k}(\hat{\bm{w}}) \right) & ~ \text{otherwise}
    \end{array}
    \right. ~~. 
\end{align*}
The proposed SCG criteria reduce to existing criteria in Proposition~\ref{prop:AA-screen-rule}. The proof of this reduction is given in Appendix~\ref{app:AA_rule}.  
\end{remark}

Additionally, a corollary (Corollary~\ref{coro:valid-tuple}) of Theorem~\ref{thm:generalized-cuts-generation-rule} is presented in Appendix~\ref{app:valid-tuple}, which gives an equivalent algebraic (and more direct) approach to verify whether a given SCG-tuple is valid or not. The Corollary~\ref{coro:valid-tuple} will be used later in the proof of Proposition~\ref{prop:minimal-tuple}.

\subsection{Minimal SCG-Tuple \& Dominating Screening Cuts} \label{sec:SCG-property}

This subsection introduces minimal SCG-tuple and corresponding dominating screening cuts. 

\begin{restatable}{proposition}{cutdomination} \label{prop:cut-domination}
Consider two valid SCG-tuples $(S_1, N_1, C_1), (S_2, N_2, C_2)$, if $S_1 \subseteq S_2$ and $N_1 \subseteq N_2$, then the screening cut generated by $(S_2, N_2, C_2)$ is dominated by the screening cut generated by $(S_1, N_1, C_1)$.
\end{restatable}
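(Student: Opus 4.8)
The plan is to show that the screening cut from $(S_1,N_1,C_1)$ logically implies the one from $(S_2,N_2,C_2)$, so that every binary point cut off by the latter is already cut off by the former. Recall the cut associated to a valid SCG-tuple $(S,N,C)$ is
\[
\sum_{i \in S} z_i + \sum_{i \in N}(1-z_i) \leq |S| + |N| - 1,
\]
which is violated precisely by those $\bm{z} \in \{0,1\}^d$ having $z_i = 1$ for all $i \in S$ and $z_i = 0$ for all $i \in N$. So I would argue at the level of these "forbidden patterns": the cut of $(S_1,N_1,C_1)$ forbids exactly the patterns with $S_1 \subseteq \supp(\bm{z})$ and $N_1 \cap \supp(\bm{z}) = \emptyset$, and similarly for $(S_2,N_2,C_2)$. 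Since $S_1 \subseteq S_2$ and $N_1 \subseteq N_2$, any $\bm{z}$ forbidden by the second tuple (i.e.\ $S_2 \subseteq \supp(\bm{z})$, $N_2 \cap \supp(\bm{z}) = \emptyset$) automatically satisfies $S_1 \subseteq S_2 \subseteq \supp(\bm{z})$ and $N_1 \cap \supp(\bm{z}) \subseteq N_2 \cap \supp(\bm{z}) = \emptyset$, hence is forbidden by the first tuple too. Therefore the feasible region left by cut $1$ is contained in that left by cut $2$, which is exactly the statement that cut $1$ dominates cut $2$.

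The one subtlety is that Proposition~\ref{prop:cut-domination} presupposes both tuples are \emph{valid}, so I should make clear where (if anywhere) validity is used. In fact the containment-of-forbidden-sets argument above never invokes inequality~\eqref{ineq:cuts-condition}; validity is needed only to guarantee that each cut is genuinely a safe screening cut (i.e.\ that it removes non-optimal solutions only, per Theorem~\ref{thm:generalized-cuts-generation-rule}), so that "dominating a cut'' is a meaningful comparison between two legitimate safe cuts rather than between arbitrary inequalities. I would state this explicitly: given that both are valid, both are safe by Theorem~\ref{thm:generalized-cuts-generation-rule}, and then the set-inclusion argument shows the cut from $(S_1,N_1,C_1)$ is at least as strong. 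It may also be worth noting for completeness that $C_1, C_2$ play no role in the domination — they only enter the validity test — so the hypothesis does not need to relate $C_1$ and $C_2$.

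I anticipate no real obstacle here; the main thing to get right is the precise logical translation of "cut $A$ dominates cut $B$'' into "the $\{0,1\}$-points violating $A$ form a superset of those violating $B$,'' equivalently "$A$'s feasible set is contained in $B$'s.'' Once that dictionary is fixed, the proof is a two-line monotonicity observation: enlarging $S$ and $N$ can only shrink the set of forbidden patterns, since the pattern must simultaneously contain all of $S$ in its support and exclude all of $N$, and both of these are conjunctions of more conditions as the sets grow. I would write this up as a short paragraph, possibly with a one-line lemma isolating the forbidden-pattern characterization of the cut if it is reused elsewhere.
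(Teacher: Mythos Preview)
Your proposal is correct and follows essentially the same approach as the paper: both arguments establish that the feasible region of the cut from $(S_1,N_1,C_1)$ is contained in that of the cut from $(S_2,N_2,C_2)$, the paper by direct algebraic manipulation starting from a point feasible for the first cut, and you by the equivalent (complementary) observation that the violating patterns of the second cut are a subset of those of the first. Your remark that validity plays no role in the domination itself, and that $C_1,C_2$ are irrelevant to the cut inequalities, is accurate and worth including.
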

The proof of Proposition~\ref{prop:cut-domination} is given in Appendix~\ref{app:cut-domination}. Note that the above dominating/dominated relationship is established based on the inclusion of index sets $S$ and $N$. To avoid dominated cuts and improve computational efficiency in numerical experiments, we first define the following minimal SCG-tuple (from the perspective of set inclusion). 

\begin{definition}
    \label{def:minimal-tuple}
    We call a valid SCG-tuple $(S,N,C)$ with respect to $\hat{\bm{w}}$ as a \emph{minimal SCG-tuple}, if the following two conditions hold simultaneously: 
    \begin{enumerate}
        \item[(1)] $N = \emptyset$, or $(S,N \setminus \{i\},C)$ cannot form a valid SCG-tuple for any $i \in N$,
        \item [(2)] $S = \emptyset$, or $(S \setminus \{i\}, N, C\cup\{i\})$ cannot form a valid SCG-tuple for any $i \in S$.
    \end{enumerate}
\end{definition}

Next, we give necessary and sufficient conditions that ensure a valid SCG-tuple is minimal.  

\begin{restatable}{proposition}{minimal}
    \label{prop:minimal-tuple}
Suppose components in $\hat{\bm{w}} = \hat{\bm{p}} \circ \hat{\bm{p}}$ are distinct. For a valid SCG-tuple $(S, N, C)$ with respect to $\hat{\bm{w}}$, we have:
\begin{itemize}
    \item \textbf{Case 1.} If $|N| > d - k$, then $(S,N,C)$ is minimal if and only if $S = \emptyset$.
    \item \textbf{Case 2.} If $|N| \leq d - k$ and $C = \emptyset$, then $(S,N,C)$ is minimal if and only if $N = \emptyset$ and $\operatorname{max}_{i \in S} ~\hat{w}_i < \hat{w}_{[1]}$. 
    \item \textbf{Case 3.} If $|N| \leq d - k$ and $C \neq \emptyset$, then $(S,N,C)$ is minimal if and only if (1) \& (2) hold simultaneously, 
    \begin{enumerate}
        \item [(1)]  $N = \emptyset$, or $\underline{\hat{{w}}_{N}} \geq \underline{\hat{{w}}_{C}}$;
        \item [(2)] $S = \emptyset$, or there exists some index $j \notin  (S \cup N \cup C)$ such that $\underline{\hat{w}_{C}} >\hat{w}_j > \max_{i \in S} ~ \hat{w}_i$.
    \end{enumerate}
\end{itemize}
\end{restatable}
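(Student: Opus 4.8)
\textbf{Proof proposal for Proposition~\ref{prop:minimal-tuple}.}

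The plan is to analyze directly when the two minimality conditions from Definition~\ref{def:minimal-tuple} can fail, working from the algebraic reformulation of validity given by Corollary~\ref{coro:valid-tuple} (which rewrites inequality~\eqref{ineq:cuts-condition} in a form that isolates how the left-hand side changes when a single index is moved between $S$, $N$, $C$, and $R$). The key observation is that, because $C$ is not a free parameter but is regenerated from $S$, $N$, $\hat{\bm{w}}$ via the rule in Definition~\ref{def:SCG-tuple}, removing an element from $N$ or from $S$ has a predictable effect: it enlarges $R$ and changes $c = \min\{k - |S|, |R|\}$, hence shifts which indices enter $C$. I would first set up notation for the ``slack'' in inequality~\eqref{ineq:cuts-condition}, call it $\Delta(S,N,C) := \sum_{i=1}^k \hat{w}_{[i]} - \sum_{i\in S}\hat{w}_i - \sum_{i\in C}\hat{w}_i - \tfrac{4}{\gamma}\gap$, so that validity means $\Delta > 0$, and minimality means that the two specified one-element modifications each make $\Delta \le 0$.

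First I would handle \textbf{Case 1}, $|N| > d - k$. Here $|R| = d - |S| - |N| < k - |S|$, so $c = |R|$ and $C = R$ (all of the remaining set); thus $\sum_{i\in S}\hat w_i + \sum_{i\in C}\hat w_i = \sum_{i\in S\cup R}\hat w_i = \sum_{i\notin N}\hat w_i$, which is independent of how the union $S\cup R$ is split. So the left-hand side of~\eqref{ineq:cuts-condition} depends only on $N$, not on $S$. Consequently, modifying $S$ (condition (2)) cannot change $\Delta$ at all — so if $S\neq\emptyset$, condition (2) fails and the tuple is not minimal; whereas if $S = \emptyset$, condition (2) is vacuous and condition (1) also holds vacuously since removing an index from $N$ only increases the left-hand side (it leaves $\sum_{i\notin N}\hat w_i$ unchanged or, after re-deriving $C$, the sum over $S\cup R$ stays $\sum_{i\notin N}\hat w_i$ — I would check the edge case where removing an index from $N$ could push $|R|$ up to $k - |S|$, i.e. $c$ switches regime). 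That gives the ``if and only if $S = \emptyset$'' characterization.

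Next, \textbf{Case 2}: $|N| \le d - k$ and $C = \emptyset$. $C = \emptyset$ forces $c = 0$, i.e. $k - |S| = 0$, so $|S| = k$ (the case $|R| = 0$ is excluded by $|N|\le d-k \Rightarrow |R| = d - k - |N| \ge 0$, and if $|R|=0$ then $|S|+|N| = d$ with $|S| = k$ again after using $c=\min\{0,0\}$; I'd state this cleanly). With $|S| = k$, the left side of~\eqref{ineq:cuts-condition} is $\sum_{i=1}^k \hat w_{[i]} - \sum_{i\in S}\hat w_i \ge 0$, with equality iff $S = \texttt{Top}_k(\hat{\bm w})$. For condition (2): replacing $S$ by $S\setminus\{i\}$ and $C=\emptyset$ by $C=\{i\}$ leaves $\sum_{i\in S}\hat w_i + \sum_{i\in C}\hat w_i$ unchanged, but now $c$ might become $1$ and $C$ is re-derived — I need $C$ to still equal $\{i\}$, which requires $\hat w_i$ to be the largest in the new $R$; this is exactly where $\max_{i\in S}\hat w_i < \hat w_{[1]}$ enters (it guarantees the index of the global max is \emph{not} in $S$, so it lands in $R$ and the regeneration of $C$ picks something at least as large as any $\hat w_i$, $i\in S$, making $\Delta$ drop). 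For condition (1): if $N\neq\emptyset$, removing an index $j$ from $N$ enlarges $R$, and since $|S|=k$ the new $c$ is still $0$... wait, $c = \min\{k-|S|,|R|\} = \min\{0,|R|+1\} = 0$, so $C$ stays $\emptyset$ and the left side is unchanged, so $\Delta$ stays positive and condition (1) fails — hence minimality forces $N = \emptyset$. Assembling: minimal iff $N=\emptyset$ and $\max_{i\in S}\hat w_i < \hat w_{[1]}$.

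Finally, \textbf{Case 3}: $|N|\le d-k$ and $C\neq\emptyset$, which is the substantive case where $0 < c = k - |S| \le |R|$. For condition (1), I would compute the change in the left side of~\eqref{ineq:cuts-condition} upon replacing $N$ by $N\setminus\{j\}$: this adds $j$ to $R$, keeps $c$ fixed (since $c = k - |S|$ and $c \le |R| < |R|+1$), and $C$ is re-derived as the top-$c$ of the enlarged $R$; so $\sum_{i\in C}\hat w_i$ weakly increases, and strictly increases (making $\Delta$ weakly decrease — we want it to become $\le 0$, equivalently to \emph{not} decrease would break minimality) precisely when $\hat w_j \ge \underline{\hat w_C}$, i.e. when $j$ would displace the current smallest element of $C$; the condition $\underline{\hat w_N} \ge \underline{\hat w_C}$ says every such $j\in N$ does displace, so every removal strictly shrinks $\Delta$ — I must then argue the shrink is by at least the slack, which is where validity being \emph{tight enough} matters; actually re-examining, condition (1) of Definition~\ref{def:minimal-tuple} only requires $(S,N\setminus\{j\},C)$ to \emph{fail} to be a valid SCG-tuple, and since removing $j$ strictly increases $\sum_{i\in C}\hat w_i$, hmm — that would make $\Delta$ \emph{smaller}, hence more likely invalid, so I need the increase to be enough; I'd track that the increment is exactly $\hat w_j - \underline{\hat w_C} + (\text{higher-order from } C\text{-reshuffling})$, and here is where I expect the bookkeeping to be delicate. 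Symmetrically for condition (2): replacing $S$ by $S\setminus\{i\}$, $C$ by $C\cup\{i\}$ increases $c$ to $c+1$, so $C$ is re-derived as top-$(c+1)$ of $R\cup\{i\}$; the net change in $-\sum_{i\in S}\hat w_i - \sum_{i\in C}\hat w_i$ is $\hat w_i - (\text{the }(c+1)\text{-th largest of } R\cup\{i\})$, and condition (2)'s requirement — existence of $j\notin S\cup N\cup C$ with $\underline{\hat w_C} > \hat w_j > \max_{i\in S}\hat w_i$ — is exactly the statement that for every $i\in S$, such a $j$ gets pulled into the new $C$, contributing a strict decrease of $\Delta$. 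I would prove both directions by these monotonicity computations, being careful about ties (excluded by hypothesis) and about the boundary subcases ($S=\emptyset$, $N=\emptyset$, $|R| = c$).

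\textbf{Main obstacle.} The delicate part is the exact accounting in Case 3 of how $\sum_{i\in C}\hat w_i$ changes when $C$ is regenerated after a one-element modification of $S$ or $N$ — one must verify that the change is not merely in the right direction but is by exactly the margin that flips validity, and that the stated conditions (1)--(2) are \emph{equivalent} to (not merely sufficient for) every such modification breaking validity. Handling all the regime-switch boundary cases for $c = \min\{k-|S|,|R|\}$ simultaneously, without ties, is where I would spend the most care; Corollary~\ref{coro:valid-tuple} is the tool that should make these increments explicit and keep the argument from becoming a case explosion.
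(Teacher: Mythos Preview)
Your proposal has a genuine gap rooted in a misreading of Definition~\ref{def:minimal-tuple}. The definition fixes the third component of the modified tuple: you are asked whether $(S,N\setminus\{i\},C)$ (same $C$) or $(S\setminus\{i\},N,C\cup\{i\})$ (that specific set, not a regenerated one) is a valid SCG-tuple. With the third component fixed this way, the quantity you call $\Delta$ is \emph{invariant} under both modifications: removing $i$ from $N$ leaves $\sum_{S}\hat w + \sum_{C}\hat w$ untouched, and moving $i$ from $S$ into $C\cup\{i\}$ leaves $\sum_{S'}\hat w + \sum_{C'}\hat w = \sum_{S}\hat w + \sum_{C}\hat w$. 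Hence condition~(iii) of Corollary~\ref{coro:valid-tuple} is \emph{always} inherited from the original valid tuple, and the only way the modified triple can fail to be a valid SCG-tuple is by violating condition~(i) (wrong cardinality $|C|$) or condition~(ii) (the fixed $C$ is no longer the top-$c$ of the new remaining set). This is exactly how the paper argues every $\Leftarrow$ direction: in Case~1 the failure is~(i), in Cases~2 and~3 the failure is~(ii).

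Consequently, the ``main obstacle'' you flag---showing the change in $\Delta$ is large enough to flip the sign---is a red herring: there is no change in $\Delta$ to track, and no margin argument is needed. Your Case~1 $\Leftarrow$ sketch (``removing an index from $N$ only increases the left-hand side'') is accordingly off target: the modified tuple $(\emptyset,N\setminus\{j\},C)$ is invalid because $|C|=d-|N|$ while the required cardinality is $\min\{k,\,d-|N|+1\}=d-|N|+1$, not because of any inequality shift. Similarly in Case~3, the correct equivalence is direct: $(S,N\setminus\{j\},C)$ remains a valid SCG-tuple iff condition~(ii) survives iff $\hat w_j<\underline{\hat w_C}$, which immediately gives $\underline{\hat w_N}\geq\underline{\hat w_C}$ as the minimality criterion; and $(S\setminus\{i\},N,C\cup\{i\})$ remains valid iff no index outside $S\cup N\cup C$ has weight between $\hat w_i$ and $\underline{\hat w_C}$, which yields criterion~(2). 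Once you see that validity of the modified tuple hinges on the structural conditions~(i)--(ii) rather than the inequality~(iii), each case becomes a short bookkeeping check with no delicate accounting.
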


The proof of Proposition \ref{prop:minimal-tuple} is given in Appendix \ref{app:minimal-tuple}. The results proposed in Proposition~\ref{prop:minimal-tuple} will be later used to construct screening cuts with relatively strong ``potential screening ability'' in Section \ref{sec:implementation-technique}. One quick example is that the valid SCG-tuple $(\{i\},\emptyset,\texttt{Top}_{k-1}(\hat{\bm{w}}))$ guaranteed in Remark \ref{rmk:AA_rule} with $i \notin \texttt{Top}_k(\hat{\bm{w}})$ is minimal. It can be shown by verifying: (1) $N = \emptyset$; (2) the index $i^*$ with respect to $k$-th largest component of $\hat{\bm{w}}$, i.e., $\hat{w}_{i^*} = \hat{w}_{[k]}$, satisfies $i^* \notin \left(\texttt{Top}_{k-1}(\hat{\bm{w}}) \cup \{i\} \right)$ and $\hat{w}_{[k-1]} > \hat{w}_{i^*} > \hat{w}_i$.

\subsection{Connections With $k$-Cardinality Binary Knapsack Polytope} \label{sec:k-knapsack-polytope}

Recall the inequality~\eqref{ineq:cuts-condition} proposed in Theorem~\ref{thm:generalized-cuts-generation-rule} can be represented as: $\sum_{i \in S}\hat{w}_i + \sum_{i \in C}\hat{w}_i  < - \frac{4}{\gamma} \gap + \sum_{i=1}^k \hat{w}_{[i]} =: c(\hat{\bm{w}})$. Consider the following $k$-cardinality binary knapsack polytope $K(\hat{\bm{w}})$: 
\begin{equation}
    \label{eq:knapsack-set}
    K(\hat{\bm{w}}):=\left\{ \bm{x} \in \{0,1\}^d~\Bigg{|}~ \sum_{i =1}^d\hat{w}_i x_i < c(\hat{\bm{w}}),~ \|\bm{x}\|_0 \leq k \right\} 
\end{equation}
It is easy to conclude that: On one side, every valid SCG-tuple $(S, N, C)$ (with respect to $\hat{\bm{w}}$) corresponds to one feasible point $\bm{x} \in K(\hat{\bm{w}})$ by letting  $\supp(\bm{x}) = S \cup C$. On the other side, if $K(\hat{\bm{w}}) \neq \emptyset$ with $c(\hat{\bm{w}}) > 0$, every feasible point $\bm{x} \in K(\hat{\bm{w}})$ corresponds to at least $2^{|\supp(\bm{x})|}$ valid SCG-tuples $(S, N, C)$ by setting $N = [\! [d]\!] \setminus \supp(\bm{x})$ and disjoint subsets $S, C$ such that $S \cup C = \supp(\bm{x})$. 

To simplify the analysis in this subsection, \emph{we assume that values of components in $\hat{\bm{w}}$ are in decreasing order, i.e., $\hat{w}_1 > \hat{w}_2 > \cdots > \hat{w}_d$.} Otherwise, we can define a new vector $\tilde{\bm{w}} := \bm{E} \hat{\bm{w}}$ with $\bm{E}$ a permutation matrix that re-orders every component of $\hat{\bm{w}}$ in a decreasing order. In the latter part of the section, we use $\tilde{\bm{w}}$ to denote the re-ordering of $\hat{\bm{w}}$ for simplicity.

Based on previous discussions, we note that every feasible $\bm{y} \in K(\tilde{\bm{w}})$ leads to multiple valid SCG-tuples. However, as hinted from the Proposition~\ref{prop:minimal-tuple}, we are only interested in minimal SCG-tuples for dominating screening cuts. To figure out the connections between any feasible vector $\bm{x} \in K(\tilde{\bm{w}})$ and minimal SCG-tuples, we give the following definition.

\begin{definition}[\textbf{Consecutive index set}] \label{def:consecutive-index-set} 
A non-empty index set $A \subseteq [\![d]\!]$ is called a \emph{consecutive index set} if, for every index $i \in A$, one of the following three cases holds: $\{i\} = A$, $i+1 \in A$, or $i-1 \in A$.
\end{definition} 

Note that for a given index set $I$, there are usually multiple ways to partition $I$ into several disjoint consecutive index sets. For example, one can partition index set $I = \{2,3,5,6,9\}$ into 
\begin{equation*}
    I =  ~~ \{2,3\} \cup \{5,6\} \cup \{9\} \quad
     \text{or} \quad \{2\} \cup \{3\} \cup \{5,6\} \cup \{9\} \quad
    \text{or} \quad \{2\}\cup \{3\}\cup\{5\}\cup \{6\}\cup \{9\} ~~. 
\end{equation*}
Later in the proof, we use \emph{minimum consecutive partition} to denote the approach that partite a given index set into disjoint consecutive index sets with the minimum number of partitions. For example, the minimum consecutive partition of $I$ is $\{2,3\} \cup \{5,6\} \cup \{9\}$. Moreover, based on our assumption that values of components in $\tilde{\bm{w}}$ are in decreasing order, the complementary index set can be defined as follows: 

\begin{definition}[\textbf{Complementary index set}] 
Given a non-empty index subset $A \subseteq [\![d]\!]$, define \emph{complementary index set} of $A$ as $\comp(A):= \left \lbrace i \in [\![d]\!]~|~i \notin A ~\text{and}~ \exists ~ j \in A ~\text{such that}~ i < j \right\rbrace.$ 
\end{definition}

Based on Defintion~\ref{def:consecutive-index-set}, for every $\bm{0} \neq \bm{y} \in K(\Tilde{\bm{w}})$, the minimum consecutive partition of $\supp(\bm{y})$ can be represented as 
\begin{align*}
    \supp(\bm{y}) = \cup_{j=1}^m A_j(\bm{y}) ~~ \text{for some $m \leq k$ with} ~~ A_j (\bm{y}) := \left\{i^{(j)}_1, \ldots, i^{(j)}_{|A_j|} \right\} \subseteq \supp(\bm{y})
\end{align*}
for all $j  = 1, \ldots, m$. Then, the minimum consecutive partition of $\comp(\supp(\bm{y}))$ can be partitioned into at most $m$ disjoint blocks:
\begin{align*}
    \comp(\supp(\bm{y})) = \cup_{j=1}^m B_j(\bm{y}) ~~\text{with}~~B_j(\bm{y}) := \left\{i^{(j - 1)}_{|A_{j - 1}|} + 1, \ldots, i^{(j)}_1 - 1 \right\} 
\end{align*}
for all $j = 1, \ldots, m$, where we let $i^{(0)}_{|A_{0}|} = 0$. Here, $B_1(\bm{y})$ might be empty. Now, we are ready to characterize the minimal SCG-tuple $(S, N, C)$ based on any non-trivial $\bm{y} \in K(\tilde{\bm{w}})$. 

\begin{restatable}{proposition}{knapsackcuts}
    \label{prop:knapsack-cuts-generation}
Suppose components in $\tilde{\bm{w}}$ are distinct. Given any non-trivial $\bm{y} \in K(\tilde{\bm{w}})$. Let the minimum consecutive partitions of $\supp(\bm{y})$ and $\comp(\supp(\bm{y}))$ be 
\begin{equation*}
     ~ \supp(\bm{y}) = \cup_{j = 1}^m A_j(\bm{y}) \quad 
    \text{and} \quad  \comp(\supp(\bm{y})) = \cup_{j = 1}^m B_j(\bm{y}) 
\end{equation*}
with some $m \leq k$. Then, one can construct minimal SCG-tuple $(S, N, C)$ with $S \cup C = \supp (\bm{y})$ as follows:
\begin{enumerate}
    \item[(1)]If $|\supp(\bm{y})| < k$, then $$(S, N, C) = (\emptyset, ~ [\![d]\!] \setminus \supp(\bm{y}), ~ \supp(\bm{y}) )$$ is a minimal SCG-tuple. 

    \item[(2)] If $|\supp(\bm{y})| = k$, for $\ell = 1, \ldots, m - 1$, we have the following SCG-tuple
    \begin{align*}
        (S, N, C) = \left( \cup_{j = \ell + 1}^m A_j (\bm{y}), ~ \cup_{j = 1}^{\ell} B_j (\bm{y}), ~ \cup_{j = 1}^{\ell} A_j (\bm{y}) \right)
    \end{align*}
    is a minimal SCG-tuple. For $\ell = 0$ or $\ell = m$, define
    \begin{equation*}
         ~ (S_1, N_1, C_1) = (\supp(\bm{y}), ~ \emptyset, ~ \emptyset) \quad
        \text{or} \quad (S_2, N_2, C_2) = (\emptyset, ~ \comp(\supp(\bm{y})), ~ \supp(\bm{y})) ~~, 
    \end{equation*}
    respectively. Then $(S_2,N_2,C_2)$ is minimal. Moreover, if $B_1(\bm{y}) \neq \emptyset$, $(S_1,N_1,C_1)$ is minimal as well.
\end{enumerate}

\end{restatable}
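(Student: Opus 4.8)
\textbf{Proof proposal for Proposition~\ref{prop:knapsack-cuts-generation}.}

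The plan is to verify, for each of the SCG-tuples listed, first that it \emph{is} a valid SCG-tuple (in the sense of Definition~\ref{def:SCG-tuple} and Theorem~\ref{thm:generalized-cuts-generation-rule}), and then that it satisfies the minimality characterization of Proposition~\ref{prop:minimal-tuple}. The validity part is where the hypothesis $\bm{y} \in K(\tilde{\bm{w}})$ is used: since $\sum_{i=1}^d \tilde{w}_i y_i < c(\tilde{\bm{w}}) = \sum_{i=1}^k \tilde{w}_{[i]} - \tfrac{4}{\gamma}\gap$ and $\supp(\bm{y}) = S \cup C$ in every construction, we get $\sum_{i \in S}\tilde{w}_i + \sum_{i \in C}\tilde{w}_i < c(\tilde{\bm{w}})$, which is exactly the restatement of inequality~\eqref{ineq:cuts-condition}. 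The only thing to check for validity is therefore that the given $(S,N,C)$ is a genuine SCG-tuple, i.e., that $C$ is the set $\{i \in R \mid \tilde{w}_i \geq (\tilde{w}_R)_{[c]}\}$ with $c = \min\{k - |S|, |R|\}$. This is where the decreasing order of $\tilde{\bm{w}}$ and the consecutive-partition bookkeeping do the work: because $\supp(\bm{y})$ is split so that $S = \cup_{j > \ell} A_j$ collects the ``later'' (smaller-weight) blocks and $C = \cup_{j \le \ell} A_j$ collects the ``earlier'' (larger-weight) blocks, while $N = \cup_{j \le \ell} B_j$ sits in the gaps strictly before the blocks of $C$, one checks that $R = [\![d]\!]\setminus(S \cup N)$ consists of $C$ together with all indices lying at or beyond the first index of $S$; the top $c = k - |S| = |C|$ weights within $R$ are then precisely the indices in $C$, since every index of $C$ precedes (hence dominates in weight) every index of $R \setminus C$. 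Cases (1) and the $\ell = m$ sub-case ($S = \emptyset$) are immediate: then $c = k - 0 = k > |\supp(\bm{y})|$ in case (1) forces $C = \{i \in R \mid \tilde{w}_i \geq (\tilde{w}_R)_{[c]}\}$ with $c = \min\{k, |R|\}$, and one checks $\supp(\bm{y}) \subseteq C$ with equality; the $\ell = 0$ sub-case $(S_1,N_1,C_1) = (\supp(\bm{y}),\emptyset,\emptyset)$ has $c = k - |S_1| = 0$, so $C_1 = \emptyset$ by convention, consistent.

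With validity in hand, minimality follows by matching each construction against the appropriate case of Proposition~\ref{prop:minimal-tuple}. For construction (1) and for $(S_2,N_2,C_2)$: here $S = \emptyset$, so condition (2) of Definition~\ref{def:minimal-tuple} is vacuous, and we are in Case~1 or Case~3 of Proposition~\ref{prop:minimal-tuple} depending on $|N|$; when $C \neq \emptyset$ we must verify condition (1) there, namely $N = \emptyset$ or $\underline{\tilde{w}_N} \geq \underline{\tilde{w}_C}$. For construction (1), $N = [\![d]\!]\setminus\supp(\bm{y})$ and $C = \supp(\bm{y})$, and $\underline{\tilde{w}_N} \geq \underline{\tilde{w}_C}$ can fail in general, so construction (1) must actually land in Case~1 ($|N| > d-k$, which holds since $|N| = d - |\supp(\bm{y})| > d - k$) where the condition is simply $S = \emptyset$ — true. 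For $(S_2,N_2,C_2)$ we have $|\supp(\bm{y})| = k$ so $|N_2| = |\comp(\supp(\bm{y}))| \le d - k$; with $C_2 = \supp(\bm{y}) \neq \emptyset$ we are in Case~3 and must check $\underline{\tilde{w}_{N_2}} \geq \underline{\tilde{w}_{C_2}}$: by definition every index in $\comp(\supp(\bm{y}))$ is strictly smaller than some index of $\supp(\bm{y})$, but $\underline{\tilde{w}_{C_2}} = \tilde{w}_{[\,\text{largest index of }\supp(\bm{y})\,]}$ — here I need the sharper observation that since $\comp(\supp(\bm{y}))$ contains only indices below the maximal element of $\supp(\bm{y})$, and $\tilde{\bm{w}}$ is decreasing, the smallest weight appearing on $\comp(\supp(\bm{y}))$ is attained at its largest index, which is $\le i^{(m)}_{|A_m|} - 1 < i^{(m)}_{|A_m|}$, so $\underline{\tilde{w}_{N_2}} > \underline{\tilde{w}_{C_2}}$; condition (1) holds, and (2) is vacuous. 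For the generic $\ell \in \{1,\dots,m-1\}$ construction, $S = \cup_{j>\ell}A_j \neq \emptyset$ and $C = \cup_{j\le\ell}A_j \neq \emptyset$ and $N = \cup_{j\le\ell}B_j$ (possibly empty if $\ell=1$ and $B_1 = \emptyset$), so we are in Case~3 and must verify both (1) and (2): (1) $\underline{\tilde{w}_N} \geq \underline{\tilde{w}_C}$ holds because $N$ lives in gaps $B_1,\dots,B_\ell$ all of whose indices are strictly below $i^{(\ell)}_1 - 1 < i^{(\ell)}_{|A_\ell|} = $ largest index of $C$... again using that the smallest weight on $C$ sits at its largest index and that every index of $N$ is smaller; (2) asks for $j \notin S\cup N\cup C$ with $\underline{\tilde{w}_C} > \tilde{w}_j > \max_{i\in S}\tilde{w}_i$, i.e.\ an index strictly between the largest index of $C$ and the smallest index of $S$ that is not itself in $S\cup C\cup N$ — and $i^{(\ell+1)}_1 - 1$ is exactly such an index, lying in $B_{\ell+1}$ which is \emph{not} part of $N$ (we only put $B_1,\dots,B_\ell$ into $N$), and it is nonempty precisely because the partition is the \emph{minimum} consecutive partition, forcing a gap between block $A_\ell$ and block $A_{\ell+1}$. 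Finally for $(S_1,N_1,C_1) = (\supp(\bm{y}),\emptyset,\emptyset)$ when $B_1(\bm{y}) \neq \emptyset$: here $C_1 = \emptyset$ and $|N_1| = 0 \le d-k$, so Case~2 applies, and minimality there means $N_1 = \emptyset$ (true) and $\max_{i\in S_1}\tilde{w}_i < \tilde{w}_{[1]} = \tilde{w}_1$; the latter holds iff $1 \notin \supp(\bm{y})$, which is exactly $B_1(\bm{y}) \ni 1$, i.e.\ $B_1(\bm{y}) \neq \emptyset$ (since $B_1(\bm{y}) = \{1,\dots,i^{(1)}_1 - 1\}$ is nonempty iff $i^{(1)}_1 > 1$).

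The main obstacle I anticipate is the validity check in Case~(2): one must show that the ad hoc set $C = \cup_{j\le\ell}A_j$ really coincides with the \emph{canonical} $C$ dictated by Definition~\ref{def:SCG-tuple}, $C = \{i \in R \mid \tilde{w}_i \geq (\tilde{w}_R)_{[c]}\}$ with $c = k - |S| = |\cup_{j\le\ell}A_j|$. This amounts to proving that, inside $R = C \sqcup (\text{indices} \geq i^{(\ell+1)}_1)$, the $c$ largest weights are exactly those on $C$. Equivalently, the largest index of $C$, namely $i^{(\ell)}_{|A_\ell|}$, carries a larger weight than the smallest index of $R \setminus C = \{i \in R : i \geq i^{(\ell+1)}_1\}$, i.e.\ a larger weight than $\tilde{w}_{i^{(\ell+1)}_1}$; since $i^{(\ell)}_{|A_\ell|} < i^{(\ell+1)}_1$ and $\tilde{\bm{w}}$ is strictly decreasing, this is immediate — but spelling out that $R$ decomposes this cleanly (i.e.\ that no index of $R$ smaller than those of $C$ exists, which would require an element of $B_1,\dots,B_\ell$ to have escaped $N$ — it cannot, we put all of them in $N$) requires careful set arithmetic with the $A_j, B_j$ indices, and handling the edge conventions ($B_1 = \emptyset$ when $1 \in \supp(\bm{y})$; $c = 0$ giving $C = \emptyset$) without sign errors. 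Once that decomposition of $R$ is nailed down, every remaining step is a one-line comparison of weights using the decreasing order together with the observation that $\underline{\tilde{w}_A}$ for any index set $A$ is the weight at the \emph{largest} index of $A$, and the existence of separating indices in (2) of Case~3 is handed to us for free by the minimality of the consecutive partition.
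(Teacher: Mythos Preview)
Your proposal is correct and follows essentially the same route as the paper: both arguments reduce everything to the case analysis of Proposition~\ref{prop:minimal-tuple}, matching each constructed tuple to Case~1, Case~2, or Case~3 as appropriate. Your write-up is in fact more explicit than the paper's on two fronts---you spell out the validity check (that the proposed $C$ coincides with the canonical top-$c$ set of Definition~\ref{def:SCG-tuple}), and you verify condition~(2) of Case~3 by exhibiting the separating index in $B_{\ell+1}$, whereas the paper merely asserts the resulting tuple is minimal. One small imprecision to clean up: your description of $R\setminus C$ as ``$\{i\in R:i\ge i^{(\ell+1)}_1\}$'' or ``all indices at or beyond the first index of $S$'' is not quite right, since $R\setminus C$ also contains the gap $B_{\ell+1}$ (starting at $i^{(\ell)}_{|A_\ell|}+1$) and excludes $S$; but the inequality you actually need---that every index of $R\setminus C$ is strictly larger than $i^{(\ell)}_{|A_\ell|}$---remains valid, so the argument goes through unchanged.
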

The proof of Proposition~\ref{prop:knapsack-cuts-generation} is presented in Appendix~\ref{app:knapsack-cuts-generation}. Based on the construction procedure given in Proposition~\ref{prop:knapsack-cuts-generation}, one can generate at most $m + 1$ minimal SCG-tuples (and corresponding screening cuts) from any non-trivial point $\bm{y} \in K(\tilde{\bm{w}})$.

\subsection{Cuts With Strong Screening Ability} \label{sec:implementation-technique}

However, from numerical results reported in Section~\ref{sec:numerical}, it is relatively inefficient (in practice) to incorporate all possible screening cuts (obtained by minimal SCG-tuples) into the original sparse ridge regression. To enhance computational efficiency, this subsection selects screening cuts (obtained by minimal SCG-tuples) with relatively strong ``\emph{potential screening ability},'' a criterion that quantifies the largest possible number of feasible solutions eliminated by corresponding screening cuts. 
\begin{definition} \label{def:potential-screening-ability}
Given a valid SCG-tuple $(S, N, C)$, sorted vector $\tilde{\bm{w}}$ given in Section~\ref{sec:k-knapsack-polytope}, and corresponding screening cut $\sum_{i \in S} z_i + \sum_{i \in N} (1 - z_i) \leq |S| + |N| - 1$, its ``\emph{potential screening ability}'' is $\sum_{i = 0}^ {|C|} {d- |S \cup N| \choose i}$, which exactly denotes the number of feasible solutions that is eliminated by incorporating the above screening cuts into the original sparse ridge regression. 
\end{definition}
The proof of bound given in Definition~\ref{def:potential-screening-ability} is presented in Appendix~\ref{app:potential-screening-ability}. 

It is easy to observe that a smaller cardinality on set $S \cup N$ and a larger cardinality on set $C$ will lead to a relatively greater ``potential screening ability'' for a given valid SCG-tuple $(S, N, C)$. Note that, in practice, a set of screening cuts are selected to incorporate into the original sparse ridge regression. Thus, the additional number of non-optimal solutions removed by a newly added screening cut heavily depends on the rest of the selected screening cuts, and may be smaller than the ``potential screening ability'' provided above. We refer to the above number of eliminated non-optimal solutions as ``\emph{(marginal) screening ability}'', which may be difficult to compute or estimate in practice. \\

\noindent\textbf{High-level idea of SCG algorithm:} To further enhance the computational efficiency for BnB, the proposed algorithm (Algorithm~\ref{alg:alg-framework}) makes a trade-off between the number of screening cuts selected and their corresponding ``screening ability'' based on the following three high-level criterion (decreasing in importance): (1) first, selecting cuts with relatively higher potential screening ability; (2) second, selecting cuts with relatively higher (marginal) screening ability; (3) third, if two cuts enjoy the same screening ability or potential screening ability, selecting the cut with greater left-hand-side value of the inequality in \eqref{eq:knapsack-set}. Thus, this subsection focuses on selecting minimal SCG-tuple (dominating screening cuts) of the following two types. \\

\noindent \textbf{Inclusive cuts.} 
The first type of dominating screening cuts takes the form: $\sum_{i \in N} z_i \geq 1$. In short, such cuts indicate that any optimal solutions of (\ref{eq:original-formulation}) contain at least one support index from $N$. According to Proposition \ref{prop:knapsack-cuts-generation}, for any $\bm{y} \in K(\Tilde{\bm{w}})$, by setting $S = \emptyset$ and $C = \supp (\bm{y})$, the resulting SCG-tuple should be
\begin{align*}
    (S, N, C) = \left\{
    \begin{array}{lll}
        (\emptyset, ~ [\![d]\!] \setminus \supp(\bm{y}), ~ \supp(\bm{y})) & \text{ if } ~~ |\supp(\bm{y})| < k \\
        (\emptyset, ~ \comp(\supp(\bm{y})), ~ \supp(\bm{y})) & \text{ if }  ~~ |\supp(\bm{y})| = k 
    \end{array}
    \right.
\end{align*}
to ensure its validity and minimality. Note that when $|\supp(\bm{y})| < k$, the cardinality of set $N$ in above SCG-tuple $(S, N, C)$ is $d - |\supp(\bm{y})|$, which greatly impedes the ``potential screening ability''. Thus, inclusive cuts generation method only focuses on finding points in the following subset $\mathcal{O}:=\{\bm{y} \in K(\tilde{\bm{w}})~\big|~|\texttt{supp}(\bm{y})| = k\}$ with minimal tuples $(S,N,C) = (\emptyset,\comp(\supp(\bm{y})),\supp(y))$. To simplify algorithm implementation (as will be mentioned in subSection~\ref{sec:algorithm-design}), partition $\mathcal{O}$ into disjoint subsets based on the last index of $\supp(\bm{y})$ as $\mathcal{O} = \cup_{i=1}^d \mathcal{T}_i$ such that 

\begin{equation} \label{eq:inclusive-cut}
    \mathcal{T}_i := \{ \bm{y} \in K(\Tilde{\bm{w}})~\big|~|\supp(\bm{y})| = k, \supp(\bm{y}) \subseteq [\![i]\!], i \in \supp(\bm{y})\}
\end{equation}
a subset of $\mathcal{O}$ with $i$ as the last index of $\supp(\bm{y})$ for all $i \in [\![d]\!]$. Given components of $\Tilde{\bm{w}}$ are in decreasing order, we have $\mathcal{O} = \cup_{i= k + 1}^d \mathcal{T}_i$ since $\mathcal{T}_i = \emptyset$ for all $i = 1, \ldots, k$. In particular, $\mathcal{T}_k = \emptyset$ holds because, if not, for any $\bm{y}' \in \mathcal{T}_k$ with $\texttt{supp}(\bm{y}') = [\![k]\!]$, we have $\sum_{i =1}^d \tilde{w}_i y'_i = \sum_{i=1}^k \tilde{w}_i \geq c(\tilde{\bm{w}}) =  - \frac{4}{\gamma} \texttt{gap} + \sum_{i=1}^k \tilde{w}_i$, which leads to a contradiction. Then we present the Claim~\ref{clm:inclusive}, which is used in Algorithm~\ref{alg:alg-framework}.

\begin{restatable}{claim}{inclusive}
% \begin{claim}
    \label{clm:inclusive}
    Given $\mathcal{T}_i$ with $i \in [\![k+1,d]\!]$, $\mathcal{T}_i \neq \emptyset$ if and only if there exists some $\bm{v} \in K(\Tilde{\bm{w}})$ such that $\supp(\bm{v}) = [\![i - (k-1),i]\!]$. Moreover, if $\mathcal{T}_{i} \neq \emptyset$, we have $\mathcal{T}_j \neq \emptyset$ for all $j = i+1,\ldots,d$.
\end{restatable}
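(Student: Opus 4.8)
\textbf{Proof proposal for Claim~\ref{clm:inclusive}.}

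The plan is to exploit the fact that, since the components of $\tilde{\bm{w}}$ are sorted in strictly decreasing order, minimizing $\sum_{i=1}^d \tilde{w}_i x_i$ over binary vectors $\bm{x}$ with $\|\bm{x}\|_0 = k$ and $\supp(\bm{x}) \subseteq [\![i]\!]$ and $i \in \supp(\bm{x})$ is a greedy problem: the smallest achievable value of the knapsack left-hand-side is obtained by including index $i$ (forced) together with the $k-1$ indices of smallest $\tilde{w}$-value available in $[\![i-1]\!]$, namely indices $i-(k-1), \dots, i-1$. Thus the cheapest support contained in $[\![i]\!]$ with last index $i$ is exactly $[\![i-(k-1),i]\!]$. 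First I would make this precise: for any $\bm{y}$ with $\supp(\bm{y}) \subseteq [\![i]\!]$, $|\supp(\bm{y})| = k$, $i \in \supp(\bm{y})$, write $\supp(\bm{y}) = \{i\} \cup T$ with $T \subseteq [\![i-1]\!]$, $|T| = k-1$; since $\tilde{w}$ is decreasing, $\sum_{j \in T} \tilde{w}_j \geq \sum_{j=i-(k-1)}^{i-1} \tilde{w}_j$, with equality iff $T = [\![i-(k-1),i-1]\!]$. Hence $\sum_j \tilde{w}_j y_j \geq \sum_{j=i-(k-1)}^{i} \tilde{w}_j$, and the minimizing $\bm{y}$ is the indicator of $[\![i-(k-1),i]\!]$.

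This immediately yields the equivalence. If $\mathcal{T}_i \neq \emptyset$, take any $\bm{y} \in \mathcal{T}_i$; the vector $\bm{v}$ with $\supp(\bm{v}) = [\![i-(k-1),i]\!]$ satisfies $\sum_j \tilde{w}_j v_j \leq \sum_j \tilde{w}_j y_j < c(\tilde{\bm{w}})$ and $\|\bm{v}\|_0 = k \leq k$, so $\bm{v} \in K(\tilde{\bm{w}})$ with the required support. Conversely, if such a $\bm{v}$ exists, then $\bm{v}$ itself lies in $\mathcal{T}_i$ (its support has size $k$, is contained in $[\![i]\!]$, and has last index $i$), so $\mathcal{T}_i \neq \emptyset$. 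Here I should note that the case hypothesis $i \geq k+1$ guarantees $i-(k-1) \geq 2 \geq 1$, so the index set $[\![i-(k-1),i]\!]$ is well defined and has exactly $k$ elements.

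For the monotonicity statement, suppose $\mathcal{T}_i \neq \emptyset$ with $i \leq d-1$; by the equivalence just proved there is $\bm{v} \in K(\tilde{\bm{w}})$ with $\supp(\bm{v}) = [\![i-(k-1),i]\!]$, so $\sum_{j=i-(k-1)}^{i}\tilde{w}_j < c(\tilde{\bm{w}})$. For any $j \in [\![i+1,d]\!]$, consider the support $[\![j-(k-1),j]\!]$; I claim its $\tilde{w}$-weight is no larger than that of $[\![i-(k-1),i]\!]$. Indeed, shifting the window of $k$ consecutive indices to the right by one replaces index $i-(k-1)$ by index $i+1$, and since $\tilde{w}$ is decreasing, $\tilde{w}_{i+1} \leq \tilde{w}_{i-(k-1)}$; iterating from the $i$-window to the $j$-window, the total weight is non-increasing, hence $\sum_{\ell=j-(k-1)}^{j}\tilde{w}_\ell \leq \sum_{\ell=i-(k-1)}^{i}\tilde{w}_\ell < c(\tilde{\bm{w}})$. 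Therefore the indicator of $[\![j-(k-1),j]\!]$ lies in $K(\tilde{\bm{w}})$, and by the equivalence $\mathcal{T}_j \neq \emptyset$. The main (and really only) obstacle is bookkeeping: being careful that all the shifted windows stay inside $[\![d]\!]$ (which holds because $j \leq d$ and $j-(k-1) \geq i-(k-1) \geq 1$) and that the strictly-decreasing assumption is invoked only where inequalities, not strict inequalities, are needed — the knapsack constraint in $K(\tilde{\bm{w}})$ is already strict, so non-strict comparisons among the $\tilde{w}$-sums suffice throughout.
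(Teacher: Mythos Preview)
Your proposal is correct and follows essentially the same approach as the paper: both arguments use that, among supports in $\mathcal{T}_i$, the consecutive window $[\![i-(k-1),i]\!]$ has the smallest $\tilde{\bm{w}}$-weight (by the decreasing order of $\tilde{\bm{w}}$), and then shift this window one step to the right to obtain the monotonicity. Your write-up is slightly more detailed in justifying the greedy step and the index bookkeeping, but the underlying idea is identical to the paper's proof.
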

The proof of Claim~\ref{clm:inclusive} is given in Appendix \ref{app:algorithm}. In short, Claim~\ref{clm:inclusive} provides a necessary and sufficient condition to verify whether $\mathcal{T}_i$ is empty or not. Notably, finding the smallest index $s \in [\![k + 1, d]\!]$ such that $\mathcal{T}_s \neq \emptyset$ is equivalent to finding all non-empty $\mathcal{T}_i$s, such a property is also used in Algorithm~\ref{alg:alg-framework}. \\

\noindent \textbf{Exclusive cuts.} The second type of screening cuts takes the form $\sum_{i \in S} z_i \leq |S| - 1$, which ensures that not all indices from $S$ are support indices for any optimal solutions of \eqref{eq:original-formulation}. Recall the minimal consecutive partitions of $\supp(\bm{y})$ and $\comp(\supp(\bm{y}))$ are $\supp(\bm{y}) = \cup_{j = 1}^m A_j(\bm{y})$ and  $\comp(\supp(\bm{y})) = \cup_{j = 1}^m B_j(\bm{y})$. According to Proposition \ref{prop:knapsack-cuts-generation}, for any $\bm{y} \in K(\Tilde{\bm{w}})$ with $|\supp(\bm{y})| = k$, the resulting SCG-tuple should be 

\begin{align*}
    (S, N, C) = \left\{
    \begin{array}{lll}
        (\supp(\bm{y}), ~ \emptyset, ~ \emptyset) 
        & \text{ if } ~~ B_1({\bm{y}}) \neq \emptyset \\
        (\cup_{j=2}^m A_j(\bm{y}), ~ \emptyset, ~ A_1(\bm{y})) 
        & \text{ if } ~~ B_1({\bm{y}}) = \emptyset 
    \end{array}
    \right. ~~
\end{align*}
to ensure its validity and minimality. Note that when $B_1(\bm{y}) \neq \emptyset$, the SCG-tuple $(\supp(\bm{y}),\emptyset,\emptyset)$ only eliminates one single point based on the ``potential screening ability''. Therefore, exclusive cuts generation method focuses on finding solutions in the following subset 
\begin{align*}
    \mathcal{O}':= \{\bm{y} \in K(\Tilde{\bm{w}})~\big|~ |\supp(\bm{y})| = k,~B_1(\bm{y}) = \emptyset \} = \{\bm{y} \in K(\Tilde{\bm{w}})~\big|~ |\supp(\bm{y})| = k,~1 \in A_1(\bm{y}) \}
\end{align*}
% $\mathcal{O}':= \{\bm{y} \in K(\Tilde{\bm{w}})~\big|~ |\supp(\bm{y})| = k,~B_1(\bm{y}) = \emptyset \} = \{\bm{y} \in K(\Tilde{\bm{w}})~\big|~ |\supp(\bm{y})| = k,~1 \in A_1(\bm{y}) \}$ 
with minimal tuples $(\cup_{j=2}^m A_j(\bm{y}), ~ \emptyset, ~ A_1(\bm{y}))$. Similar to inclusive cuts generation, partition $\mathcal{O}'$ into disjoint subsets based on $A_1(\bm{y})$ as $\mathcal{O}' = \cup_{i=1}^d \mathcal{T}_i'$ such that
\begin{equation} \label{eq:exclusive-cut}
    \mathcal{T}_i':= \{ \bm{y} \in K(\Tilde{\bm{w}})~\big|~|\supp(\bm{y})| = k, A_1(\bm{y}) = [\![i]\!]  \} 
\end{equation}

Given components of $\Tilde{\bm{w}}$ are in decreasing order, we have $\mathcal{O}' = \cup_{i=1}^{k-1} \mathcal{T}_i'$ since $\mathcal{T}_i' = \emptyset$ for $i = k,\ldots,d$. The following claim provides a necessary and sufficient condition to certify whether each $\mathcal{T}_i'$ is empty or not.

\begin{restatable}{claim}{exclusive}\label{clm:exclusive}
% \begin{claim} 
    Given $\mathcal{T}_i'$ with $i \in [\![1,k-1]\!]$, we have $\mathcal{T}_i' \neq \emptyset$ if and only if there exists some $\bm{v} \in K(\Tilde{\bm{w}})$ such that $\supp(\bm{v}) = [\![i]\!] \cup [\![d - (k-i-1),d]\!]$; Furthermore, if $\mathcal{T}_{i}' \neq \emptyset$, we have $\mathcal{T}_j' \neq \emptyset$, for all $j = 1,\ldots,i-1$.
\end{restatable}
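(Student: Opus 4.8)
\noindent\textbf{Proof proposal for Claim~\ref{clm:exclusive}.} The plan is to reduce both statements to the single monotonicity principle underlying this subsection: since the entries of $\tilde{\bm{w}}$ are strictly decreasing, replacing a support index by a larger one can only decrease the knapsack load $\sum_{\ell}\tilde w_\ell x_\ell$, and membership in $K(\tilde{\bm{w}})$ is exactly the requirement that this load stay below $c(\tilde{\bm{w}})$ together with $|\supp(\bm{x})|\le k$. I work in the nondegenerate regime $d>k$, which guarantees that the two blocks $[\![i]\!]$ and $[\![d-(k-i-1),d]\!]=\{d-k+i+1,\dots,d\}$ are disjoint, have total size $k$, and are separated by the absent index $i+1$; hence the minimum consecutive partition of $[\![i]\!]\cup[\![d-(k-i-1),d]\!]$ has first block exactly $[\![i]\!]$.

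The ``if'' direction is then immediate: a point $\bm{v}\in K(\tilde{\bm{w}})$ with $\supp(\bm{v})=[\![i]\!]\cup[\![d-(k-i-1),d]\!]$ satisfies $|\supp(\bm{v})|=k$ and $A_1(\bm{v})=[\![i]\!]$, so $\bm{v}\in\mathcal{T}_i'$. For ``only if'', I would take any $\bm{y}\in\mathcal{T}_i'$ and write $\supp(\bm{y})=[\![i]\!]\cup T$ with $T\subseteq[\![i+2,d]\!]$, $|T|=k-i$ (this uses $A_1(\bm{y})=[\![i]\!]$, which forces $1,\dots,i\in\supp(\bm{y})$ and $i+1\notin\supp(\bm{y})$). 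Letting $\bm{v}$ be the indicator of $[\![i]\!]\cup[\![d-(k-i-1),d]\!]$, the set $[\![d-(k-i-1),d]\!]$ indexes the $k-i$ smallest entries among $\{\tilde w_\ell:\ell\in[\![i+1,d]\!]\}\supseteq\{\tilde w_\ell:\ell\in T\}$, so the ``sum of the $m$ smallest'' inequality gives $\sum_{\ell\in[\![d-(k-i-1),d]\!]}\tilde w_\ell\le\sum_{\ell\in T}\tilde w_\ell$; adding $\sum_{\ell=1}^i\tilde w_\ell$ to both sides yields $\sum_\ell\tilde w_\ell v_\ell\le\sum_\ell\tilde w_\ell y_\ell<c(\tilde{\bm{w}})$, and since $|\supp(\bm{v})|=k$ we get $\bm{v}\in K(\tilde{\bm{w}})$ with the desired support.

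For the ``furthermore'' part I would use a pairing estimate together with the characterization just proved: take the witness $\bm{v}\in K(\tilde{\bm{w}})$ with $\supp(\bm{v})=[\![i]\!]\cup\{d-k+i+1,\dots,d\}$, fix $1\le j<i$, and let $\bm{u}$ be the indicator of $[\![j]\!]\cup\{d-k+j+1,\dots,d\}$. Telescoping the tail sums of $\tilde{\bm{w}}$ gives $\sum_\ell\tilde w_\ell v_\ell-\sum_\ell\tilde w_\ell u_\ell=\sum_{t=1}^{i-j}\bigl(\tilde w_{j+t}-\tilde w_{d-k+j+t}\bigr)\ge0$, since $j+t\le d-k+j+t$ (as $d\ge k$) and $\tilde{\bm{w}}$ is decreasing; hence $\sum_\ell\tilde w_\ell u_\ell\le\sum_\ell\tilde w_\ell v_\ell<c(\tilde{\bm{w}})$ and $|\supp(\bm{u})|=k$, so $\bm{u}\in K(\tilde{\bm{w}})$, and the ``if'' direction applied at index $j$ gives $\mathcal{T}_j'\neq\emptyset$. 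The main obstacle I anticipate is not conceptual but bookkeeping: keeping the gap index ($i+1$, resp.\ $j+1$) genuinely absent so that the first block of the minimum consecutive partition is as claimed, and correctly aligning the index window $\{d-k+j+1,\dots,d-k+i\}$ produced when telescoping the tail sums; the substantive ingredient is merely the monotone-rearrangement principle already used implicitly in Proposition~\ref{prop:knapsack-cuts-generation} and in the argument that $\mathcal{T}_k=\emptyset$.
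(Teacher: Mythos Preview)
Your proposal is correct and follows essentially the same monotonicity argument as the paper: both compare the knapsack load of the canonical witness $\bm{v}$ with support $[\![i]\!]\cup[\![d-(k-i-1),d]\!]$ against an arbitrary $\bm{y}\in\mathcal{T}_i'$ using that $\tilde{\bm{w}}$ is decreasing, and both prove the ``furthermore'' part by comparing the canonical witnesses at two different indices. The only cosmetic differences are that you spell out the ``if'' direction (verifying $A_1(\bm{v})=[\![i]\!]$ via the gap at $i+1$, under $d>k$) where the paper calls it obvious, and you prove the monotone chain $\mathcal{T}_i'\neq\emptyset\Rightarrow\mathcal{T}_j'\neq\emptyset$ directly for all $j<i$ via the telescoping identity $\sum_{t=1}^{i-j}(\tilde w_{j+t}-\tilde w_{d-k+j+t})\ge 0$, whereas the paper does a single inductive step $i\mapsto i-1$.
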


The proof of Claim \ref{clm:exclusive} is given in \ref{app:algorithm}. This claim is then used in designing Algorithm \ref{alg:alg-framework}. \\

In summary, the SCG algorithm prefers to select cuts with a relatively stronger potential screening ability than other criteria. For inclusive cut generation, given $\bm{y} \in \mathcal{T}_i \neq \emptyset$, its corresponding inclusive cut involves $|\comp(\supp(\bm{y}))| = i - k$ binary variables. Thus, SCG algorithm prefers to consider $\mathcal{T}_i$ with a smaller index $i ~ (\geq k + 1)$ to ensure a relatively stronger ``potential screening ability''. In contrast, for exclusive cut generation, given $\bm{y}' \in \mathcal{T}_i' \neq \emptyset$, its corresponding exclusive cut involves $|\cup_{j=2}^m A_j(\bm{y})| = k - |A_1(\bm{y})| = k-i$ binary variables. Therefore, SCG algorithm prefers to consider $\mathcal{T}_i'$ with a larger index $i ~ (\leq k-1)$ to ensure a relatively stronger ``potential screening ability''.

\subsubsection{SCG Algorithm Design} \label{sec:algorithm-design}

According to the discussions in Section~\ref{sec:implementation-technique} above, the SCG algorithm is presented in Algorithm \ref{alg:alg-framework}. In particular, the proposed Algorithm \ref{alg:alg-framework} includes two hyper-parameters that control the maximum number of generated cuts $\sharp_{\tt max}$, and the maximum number of binary variables (potential screening ability) involved in each cut $\sharp_{\tt len}$. Additionally, we give brief discussions on two key steps of Algorithm \ref{alg:alg-framework} as follows: \\

\begin{algorithm}[t]
\caption{Inclusive/Exclusive Screening Cuts Generation (SCG)}
\label{alg:alg-framework}
\KwIn{Sorted $\Tilde{\bm{w}}$, sparsity $k$, max number of cuts $\sharp_{\texttt{max}}$, max length of cuts $\sharp_{\texttt{len}}$.}

Set $\texttt{SR} = \texttt{SR}_{\tt inc}$ or $\texttt{SR} = \texttt{SR}_{\tt exc}$ from \eqref{eq:inc-SR-set} or \eqref{eq:exc-SR-set}, set $\texttt{stopflag} = \texttt{false}$\; 
Initialize set of generated cuts: $S_{\texttt{cuts}} = \emptyset$\;
Initialize starting search index $s$ based on (\ref{eq:inc-sr}) or (\ref{eq:exc-sr})\hfill $\triangleright \textcolor{blue}{{\CommentSty{STEP 1}}}$

\SetKw{KwTrue}{\texttt{true}}
\SetKw{StopFlag}{\texttt{stopflag}}

\Repeat{\StopFlag= \KwTrue}{
    Enumerate solutions in $\mathcal{T}_s$ or $\mathcal{T}_s'$ recursively using Algorithm \ref{alg:recursive-iteration}; \hfill $\triangleright \textcolor{blue}{{\CommentSty{STEP 2}}}$

    \For{$\bm{y} \in \mathcal{T}_s$ or $\mathcal{T}_s'$}{
        Add inclusive/exclusive cut into $S_{\texttt{cuts}}$\;
        Update $\sharp_{\texttt{max}} = \sharp_{\texttt{max}} - 1$\;
        \If{$\sharp_{\texttt{max}} \leq 0$}{
            set $\texttt{stopflag} = \texttt{true}$\;
            \textbf{break}\;
        }
    }
    Update search index $s$ based on discussion of STEP 2 in Section~\ref{sec:algorithm-design}\;
    If $s \in \texttt{SR}$, set $\texttt{stopflag} = \texttt{false}$; otherwise set $\texttt{stopflag} = \texttt{true}$\;
    }

\KwOut{Set of generated cuts $S_{\texttt{cuts}}$.}
\end{algorithm}

\noindent\textbf{STEP 1.} 
Given a fixed $\sharp_{\tt len}$ that controls the number of binary variables involved, for inclusive or exclusive cut, it is sufficient to take indices $i$ such that $i \leq k + \sharp_{\tt len}$ or $i \geq k - \sharp_{\tt len}$, respectively. That is to say, SCG algorithm only considers feasible points in $\mathcal{T}_i$s or $\mathcal{T}_i'$s with index $i$ in the corresponding searching range,  
\begin{align}
    & ~ i \in \texttt{SR}_{\tt inc}:=[\![k+1,\min\{k+\sharp_{\tt len},d\}]\!] ~~ \text{for inclusive cuts,} \label{eq:inc-SR-set} \\
    & ~ i \in \texttt{SR}_{\tt exc}:=[\![\max\{1,k - \sharp_{\tt len}\},k-1]\!] ~~ \text{for exclusive cuts}. \label{eq:exc-SR-set}
\end{align}
Therefore, in Step 1, the starting search index $s$ is initialized by  
\begin{align}
    s_{\tt inc} := & ~ \operatorname{argmin} \left\{ i \in \texttt{SR}_{\tt inc}~\big|~\exists~\bm{v} \in K(\Tilde{\bm{w}})~\text{s.t.}~\supp(\bm{v}) = [\![i - (k-1),i]\!]\right\}  \label{eq:inc-sr} \\
    s_{\tt exc} := & ~ \operatorname{argmax} \left\{ i \in \texttt{SR}_{\tt exc}~\big|~\exists~\bm{v} \in K(\Tilde{\bm{w}})~\text{s.t.}~\supp(\bm{v}) = [\![i]\!] \cup [\![d - (k-i-1),d]\!]\right\} \label{eq:exc-sr} 
\end{align}
for inclusive or exclusive cuts, respectively, which denotes the starting search index of $\mathcal{T}_{\cdot}$ or $\mathcal{T}_{\cdot}'$ that Algorithm~\ref{alg:alg-framework} enumerates based on Claim~\ref{clm:inclusive} and Claim~\ref{clm:exclusive}. Additionally, if $\mathcal{T}_i = \emptyset$, $\mathcal{T}_i' = \emptyset$ for all $i$ within the searching range $\texttt{SR}_{\tt inc}, \texttt{SR}_{\tt exc}$, there is no feasible solution in $K(\tilde{\bm{w}})$ that satisfies our needs, and the Algorithm~\ref{alg:alg-framework} terminates. \\

\noindent\textbf{STEP 2.} If current search index $s$ lies in the searching range $\texttt{SR}_{\tt inc}$ or $\texttt{SR}_{\tt exc}$, we run a recursive algorithm (Algorithm \ref{alg:recursive-iteration}) to enumerate solutions in $\mathcal{T}_s$ or $\mathcal{T}_s'$. Note that enumerating all solutions in $\mathcal{T}_s$ or $\mathcal{T}_s'$ with $\sharp_{\tt max} = + \infty$ leads to a worst-case computational complexity $O\big((k-1)^2 \cdot{k + \sharp_{\tt len} - 1 \choose k-1}\big)$ or $O\big((\sharp_{\tt len})^2 \cdot {d \choose \sharp_{\tt len}}\big)$, respectively, which is exponential in $k$ or $\sharp_{\tt len}$. However, in practice, we always set the hyper-parameter $\sharp_{\tt max} \leq O(d)$ and $\sharp_{\tt len} \in \{2,3\}$ (see default hyper-parameter settings in Section~\ref{sec:experimental-setup}), which makes STEP 2 very efficient (takes at most $2$ seconds for synthetic datasets and $30$ seconds for hard real instances, see Table~\ref{tbl:pre-solving-time} for details). Moreover, based on the design of our recursive algorithm, STEP 2 first enumerates solutions with a greater value on the left side of the inequality in \eqref{eq:knapsack-set}, which follows the high-level idea mentioned before.

Then, Algorithm~\ref{alg:alg-framework} updates its current searching index $s$ by adding one for inclusive cuts generation and deleting one for exclusive cuts generation, respectively. Based on Claim \ref{clm:inclusive} and Claim \ref{clm:exclusive}, the updated searching index $s$ ensures $\mathcal{T}_s \neq \emptyset$. The stopping criterion of Algorithm~\ref{alg:alg-framework} is set by: if one of the following two conditions is satisfied: (1) the number of cuts in $S_{\texttt{cuts}}$ exceeds $\sharp_{\tt max}$, (2) search index $s$ beyond its searching range $\texttt{SR}_{\tt inc}$ or $\texttt{SR}_{\tt exc}$. \\

Based on the above analysis, we claim the following result.    
\begin{restatable}{claim}{generateall}\label{clm:generate-all}
By setting the maximal number of inclusive or exclusive cuts $\sharp_{\tt max} = + \infty$, Algorithm \ref{alg:alg-framework} outputs all minimal inclusive cuts with $\sharp_{\tt len}\leq d-k$, or exclusive cuts with maximum length $\sharp_{\tt len} \leq k-1$, respectively. 
\end{restatable}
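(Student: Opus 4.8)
\textbf{Proof proposal for Claim~\ref{clm:generate-all}.}

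The plan is to argue separately for inclusive and exclusive cuts, showing in each case that (i) every solution enumerated by Algorithm~\ref{alg:alg-framework} yields a minimal cut of the prescribed form, and (ii) every minimal cut of that form (with the allowed length) is in fact produced. Part (i) is essentially immediate from Proposition~\ref{prop:knapsack-cuts-generation}: for inclusive cuts, each $\bm{y} \in \mathcal{T}_i$ with $|\supp(\bm{y})| = k$ gives the minimal SCG-tuple $(\emptyset, \comp(\supp(\bm{y})), \supp(\bm{y}))$ (case~(2), $\ell = m$ subcase of Proposition~\ref{prop:knapsack-cuts-generation}), and for exclusive cuts, each $\bm{y} \in \mathcal{T}_i'$ (so $B_1(\bm{y}) = \emptyset$, i.e. $1 \in A_1(\bm{y})$) gives the minimal SCG-tuple $(\cup_{j=2}^m A_j(\bm{y}), \emptyset, A_1(\bm{y}))$. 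I would also invoke the length bookkeeping from STEP~1: the inclusive cut from $\mathcal{T}_i$ has length $|\comp(\supp(\bm{y}))| = i - k$, which is $\leq \sharp_{\tt len}$ precisely when $i \in \texttt{SR}_{\tt inc}$; symmetrically the exclusive cut from $\mathcal{T}_i'$ has length $k - i \leq \sharp_{\tt len}$ precisely when $i \in \texttt{SR}_{\tt exc}$.

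For part (ii), I would first characterize which minimal SCG-tuples correspond to inclusive (resp.\ exclusive) cuts of length $\leq \sharp_{\tt len}$. An inclusive cut $\sum_{i \in N} z_i \geq 1$ has $S = \emptyset$; by the correspondence with $K(\tilde{\bm{w}})$ (discussion preceding Definition~\ref{def:consecutive-index-set}), such a valid tuple arises from some $\bm{y} \in K(\tilde{\bm{w}})$ with $\supp(\bm{y}) = C$, and minimality forces $N = \comp(\supp(\bm{y}))$ when $|\supp(\bm{y})| = k$ (Proposition~\ref{prop:minimal-tuple}, Case~3(1) together with the analysis in the ``Inclusive cuts'' paragraph); the case $|\supp(\bm{y})| < k$ gives $|N| = d - |\supp(\bm{y})| \geq d - (k-1) > \sharp_{\tt len}$ whenever $\sharp_{\tt len} \leq d - k$, so it is excluded by the length restriction. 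Hence every minimal inclusive cut of length $\leq \sharp_{\tt len}$ corresponds to a point of $\mathcal{O} = \cup_{i=k+1}^d \mathcal{T}_i$ with last support index $i \leq k + \sharp_{\tt len}$, i.e.\ $i \in \texttt{SR}_{\tt inc}$. The analogous statement for exclusive cuts uses the ``Exclusive cuts'' paragraph and Proposition~\ref{prop:minimal-tuple} Case~3(2): a minimal exclusive cut of length $k - i \leq \sharp_{\tt len}$ corresponds to a point of $\mathcal{O}' = \cup_{i=1}^{k-1}\mathcal{T}_i'$ with $A_1(\bm{y}) = [\![i]\!]$ and $i \in \texttt{SR}_{\tt exc}$.

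It then remains to show Algorithm~\ref{alg:alg-framework} visits \emph{every} nonempty $\mathcal{T}_s$ (resp.\ $\mathcal{T}_s'$) with $s$ in the searching range, and within each, enumerates \emph{all} of its points. The first is exactly the monotonicity content of Claim~\ref{clm:inclusive} (``if $\mathcal{T}_i \neq \emptyset$ then $\mathcal{T}_j \neq \emptyset$ for all $j > i$'') and Claim~\ref{clm:exclusive} (``if $\mathcal{T}_i' \neq \emptyset$ then $\mathcal{T}_j' \neq \emptyset$ for all $j < i$''): starting from $s_{\tt inc}$ (the smallest nonempty index in $\texttt{SR}_{\tt inc}$, correctly identified via the support-test in Claim~\ref{clm:inclusive}) and incrementing $s$ by one each round sweeps exactly the nonempty $\mathcal{T}_i$'s in $\texttt{SR}_{\tt inc}$; symmetrically for exclusive cuts starting from $s_{\tt exc}$ and decrementing. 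The ``enumerates all points of $\mathcal{T}_s$'' part is the correctness of the recursive subroutine Algorithm~\ref{alg:recursive-iteration}, which I would invoke as an already-established fact (its complexity bound is the one quoted in STEP~2 with $\sharp_{\tt max} = +\infty$). Combining: with $\sharp_{\tt max} = +\infty$ the only stopping condition that fires is $s$ leaving the searching range, so the algorithm produces exactly the cuts coming from all points of all nonempty $\mathcal{T}_i$ (resp.\ $\mathcal{T}_i'$) with $i$ in range — which by part (ii) is the full set of minimal inclusive cuts with $\sharp_{\tt len} \leq d-k$ (resp.\ exclusive cuts with $\sharp_{\tt len} \leq k-1$).

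The main obstacle I anticipate is part~(ii): cleanly proving that \emph{no} minimal inclusive/exclusive cut of bounded length escapes the $\mathcal{T}_i$/$\mathcal{T}_i'$ parametrization. This requires ruling out ``mixed'' minimal tuples — e.g.\ an inclusive-type cut ($S = \emptyset$) whose $C$ does not equal $\supp(\bm{y})$ for the natural $\bm{y}$, or an exclusive-type cut ($N = \emptyset$) with $B_1(\bm{y}) \neq \emptyset$, which Proposition~\ref{prop:knapsack-cuts-generation} shows is minimal only as the trivial single-point cut $(\supp(\bm{y}), \emptyset, \emptyset)$ of length $k > \sharp_{\tt len}$ — and carefully checking that the length thresholds $d - k$ and $k - 1$ are exactly the ones that exclude the degenerate small-support case ($|\supp(\bm{y})| < k$) for inclusive cuts and keep $\mathcal{O}' \neq \emptyset$ meaningful for exclusive cuts. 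All of this is bookkeeping against Proposition~\ref{prop:minimal-tuple} and Proposition~\ref{prop:knapsack-cuts-generation}, but it is the step where an off-by-one in the range definitions \eqref{eq:inc-SR-set}–\eqref{eq:exc-SR-set} would break the claim, so it deserves the most care.
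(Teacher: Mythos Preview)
Your proposal is correct and follows essentially the same approach as the paper's proof: rule out the degenerate cases ($|\supp(\bm{y})| < k$ for inclusive, $B_1(\bm{y}) \neq \emptyset$ for exclusive) by showing their cut lengths exceed the threshold $\sharp_{\tt len}$, then use the length bookkeeping ($i' - k \leq \sharp_{\tt len}$ resp.\ $k - i' \leq \sharp_{\tt len}$) to place the relevant $\bm{y}$ in some $\mathcal{T}_{i'}$ (resp.\ $\mathcal{T}_{i'}'$) with $i'$ in the search range, and finally argue via the monotonicity of Claims~\ref{clm:inclusive}--\ref{clm:exclusive} that the sweep with $\sharp_{\tt max} = +\infty$ reaches $s = i'$ before terminating. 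Your write-up is in fact more explicit than the paper's (which dispatches the case analysis with ``based on previous analysis'' and does not separately state your part~(i)); the extra care you flag in the last paragraph---pinning down $N = \comp(\supp(\bm{y}))$ and $C = A_1(\bm{y})$ from validity plus Proposition~\ref{prop:minimal-tuple}---is exactly the bookkeeping the paper leaves implicit.
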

The proof of Claim~\ref{clm:generate-all} is given in Appendix \ref{app:algorithm}, which ensures that, in theoretical, Algorithm~\ref{alg:alg-framework} outputs all dominating cuts with deserved length (number of binaries involved).

\section{Numerical Experiment} \label{sec:numerical}
This section conducts numerical experiments on both synthetic and real datasets to demonstrate the efficiency of our proposed SCG method compared with the existing baseline, Safe Screening Rules (SSR, \cite{Ata20}), in the pre-processing step.

\subsection{Experimental Setup and Implementation} \label{sec:experimental-setup}

\textbf{Loss function.} 
Numerical experiments are conducted via the following sparse linear ridge regression model: 
\begin{equation} \label{eq:ridge-regression}
    \min_{\|\bm{\beta}\|_0 \leq k} ~ \frac{1}{n} \| \bm{Y} - \bm{X}\bm{\beta}\|_2^2 + \gamma \|\bm{\beta}\|_2^2
\end{equation}
where we use $n$ as the number of samples, $d$ as the dimension of each input sample and decision variable $\bm{\beta}$, coefficient matrix $\bm{X} \in \mathbb{R}^{n \times d}$ is the input sample matrix with its $i$-th row $\bm{X}_{i, :}$ as the $i$-th input sample, coefficient vector $\bm{Y} \in \mathbb{R}^n$ is the output (response) vector with its $i$-th component $\bm{Y}_i$ as the response of $\bm{X}_{i,:}$, $\gamma > 0$ is a pre-determined parameter for $\ell_2$-norm regularization, and $k$ is the sparsity level. We fix the sparsity level $k = 10$ by default. \\

\noindent \textbf{Performance measures.} 
The numerical performance of the proposed SCG and SSR are measured based on the following metrics:
\begin{enumerate}

\item[1.] \emph{Cut Characteristics}: We collect the characteristics or properties of (screening) cuts generated by SCG and SSR. In particular, for SSR, we compute the average number of binaries that can be screened/fixed. For SCG, we check the following two aspects: (i) the average number of generated inclusive and exclusive cuts, and (ii) the average number of binary variables involved in each screening cut.

\item[2.] \emph{Total Running Time}: For SCG and SSR, we measure the total running time (in seconds) required to solve Problem (\ref{eq:ridge-regression}) to same MIPGap, i.e. $\text{MIPGap}:= |v_p - v_d|/|v_p|$, where $v_p$ is the primal objective bound and $v_d$ is the dual objective bound. We use $t_{\tt SCG}$ to denote total running time of SCG, which is given by $t_{\tt SCG} := t^{\tt pre}_{\tt SCG} + t^{\tt sol}_{\tt SCG}$ with $t^{\tt pre}_{\tt SCG}$ its pre-processing time (i.e., time used to select screening cuts by SCG Algorithm~\ref{alg:alg-framework}) and $t^{\tt sol}_{\tt SCG}$ the time used to solving the original problem with added cuts by BnB. Similarly for SSR, its total running time is given by $t_{\tt SSR} := t^{\tt pre}_{\tt SSR} + t^{\tt sol}_{\tt SSR}$.
\end{enumerate}

\noindent \textbf{Testing procedures.} 
For each dataset $(\boldsymbol{X},\boldsymbol{Y})$ with regularization parameter $\gamma$ and default sparsity level $k = 10$, the numerical experiments take the following procedures:
\begin{enumerate}
    \item We first solve the corresponding Problem (\ref{eq:ridge-regression}) using SSR via Gurobi. The Gurobi solver for SSR terminates if one of the following two stopping criteria (SC) is satisfied: (SSR-SC1) the time limit goes to 15 minutes; (SSR-SC2) the MIPGap is less than $1\%$. We use $t^{\texttt{sol}}_{\texttt{SSR}}$ and $\texttt{Gap}_{\texttt{SSR}}$ to denote the running time and MIPGap when Gurobi terminates for SSR method. 

    \item We then solve the corresponding Problem (\ref{eq:ridge-regression}) using the proposed SCG method via Gurobi. The Gurobi solver for SCG terminates if one of the following two stopping criteria is satisfied: (SCG-SC1) the time limit goes to $t^{\texttt{sol}}_{\texttt{SSR}}$ in first bullet; (SCG-SC2) the MIPGap of SCG is less than $\max \{\texttt{Gap}_{\texttt{SSR}} - \varepsilon,1\%\}$ for some small $\varepsilon$ (we set $\varepsilon = 0.1\%$ by default). The (SCG-SC2) is designed to prevent scenarios in which SSR has been stuck at a particular MIP gap value. 
\end{enumerate}

\noindent \textbf{Default hyper-parameter setting for SCG algorithm.} To exceed the maximum number of inclusive cuts ($z_i = 1$ for some $i$) and exclusive cuts ($z_i = 0$ for some $i$) that SSR would select, the hyper-parameter $\sharp_{\tt max}$ in proposed SCG Algorithm~\ref{alg:alg-framework} is set as follows: For inclusive cuts generation, we set max number of inclusive cuts $\sharp_{\tt max} = k = 10$ by default; and for exclusive cuts generation, we set max number of exclusive cuts $\sharp_{\tt max} = d- k = d- 10$ by default. As mentioned above, a larger value on $\sharp_{\tt max}$ gives a tighter outer approximation to the optimal set, while it may impede whole computational efficiency by adding too many constraints. Hence, $\sharp_{\tt max}$ should not be too large in practice. To ensure relatively strong potential screening ability, the hyper-parameter $\sharp_{\tt len}$ is set as $2$ for inclusive cuts generation, and as $3$ for exclusive cuts generation. \\

\noindent \textbf{Hardware $\&$ Software.}  All experiments are conducted in Dell workstation Precision 7920 with a
3GHz 48Cores Intel Xeon CPU and 128GB 2934MHz DDR4 Memory. The proposed method and other baselines are solved using Gurobi 12.0.0 in Python 3.12.8. 

\subsection{Experimental Dataset} \label{sec:description-of-experimental-data}

\noindent \textbf{Synthetic datasets.} We conducted numerical experiments on synthetic datasets with the following settings. 

\begin{itemize}
    \item \textbf{Data generation procedure.} Synthetic datasets $(\bm{X}, \bm{Y})$ are generated as follows (also used in \cite{Ber20}): Each sample $\bm{X}_{i, :}$ is i.i.d. drawn from a $d$-dimensional Gaussian distribution $\mathcal{N}(\boldsymbol{0}, \bm{\Sigma})$ with covariance $\bm{\Sigma}_{ij} = (\rho^{|i-j|})_{i,j}$ for some fixed covariance parameter $\rho \in (0, 1)$ for all $i,j \in [\![d]\!]$. The response vector $\bm{Y}$ is therefore generated by $\bm{Y} = \bm{X}\bm{\beta}^* + \bm{\epsilon}$ for some ground-truth $\bm{\beta}^*$, where $\bm{\beta}^* \in \{-1,0,1\}^d$ is randomly sampled with exactly $k_0$ non-zero entries, and every component of noise vector $\bm{\epsilon}$ is i.i.d. generated from a Gaussian distribution $\mathcal{N}(0, \frac{1}{n} \frac{\|\bm{X} \bm{\beta}^*\|_2^2}{\text{SNR}^2})$ with $\text{SNR}$ denotes the signal-to-noise ratio.

    \item \textbf{Parameter settings.} We set input dimension $d \in \{100,300,500\}$, signal-to-noise ratio $\text{SNR} \in \{0.5, 3.5\}$, and $\gamma \in \{1.4,1.6,1.8,2,2.2\}$. We fix number of samples $n = 50$, covariance parameter $\rho = 0.5$, and number of non-zero entries $k_0 = 10$. For each parameter configuration $(d, \text{SNR},\gamma, k_0 = 10, \rho = 0.5,  n = 50)$, we generate $10$ random independent synthetic datasets, i.e.,  $(\boldsymbol{X}^{(1)}, \boldsymbol{Y}^{(1)}),\ldots, (\boldsymbol{X}^{(10)}, \boldsymbol{Y}^{(10)})$. 

\end{itemize}

\noindent \textbf{Real datasets.} We conducted numerical experiments on two real datasets: CNAE ($856$ features, $1080$ samples) and UJIndoorLoc ($520$ features, $19937$ samples) from the UCI Machine Learning Repository \citep{UCIML}, which are also tested in Section 4.2 in \cite{Ata20}. 

\begin{itemize}
    \item \textbf{Parameter settings.} For each dataset, the sample-dependent parameter $\gamma_0$ for $\ell_2$-norm regularization is set as $\gamma_0:= \frac{d}{k \operatorname{max}_j \| \boldsymbol{X}_{j,:}\|_2^2}$, which follows the setting used in Section 4.2 of \cite{Ata20}. The $\gamma$'s used in our experiments are then ranging from $0.04 / \gamma_0$ to $0.16 / \gamma_0$ under a relatively lower $\gamma$ regime (ranging from 0.04 to 0.16) ignoring the sample-dependent parameter $\gamma_0$. In particular, we set $\gamma \in \{0.04,0.05,0.055,0.06,\ldots,0.09\}$ for dataset CNAE and select $\gamma \in \{ 3000,4000,\ldots,14000\}$ for dataset UJIndoorLoc.
    
    \item \textbf{Reduced sample set.} We further test SSR and SCG on more challenging tasks (in statistics) with reduced sample sets. In particular, we pick the first 200 samples as the reduced sample set for CNAE, and pick 300 samples (first 60 samples among five distinct classes) as the reduced sample set for UJIndoorLoc. 
\end{itemize}

\begin{figure*}[t!]
\vskip 0.2in
\centering
\begin{subfigure}
\centering
\includegraphics[width=0.32\linewidth]{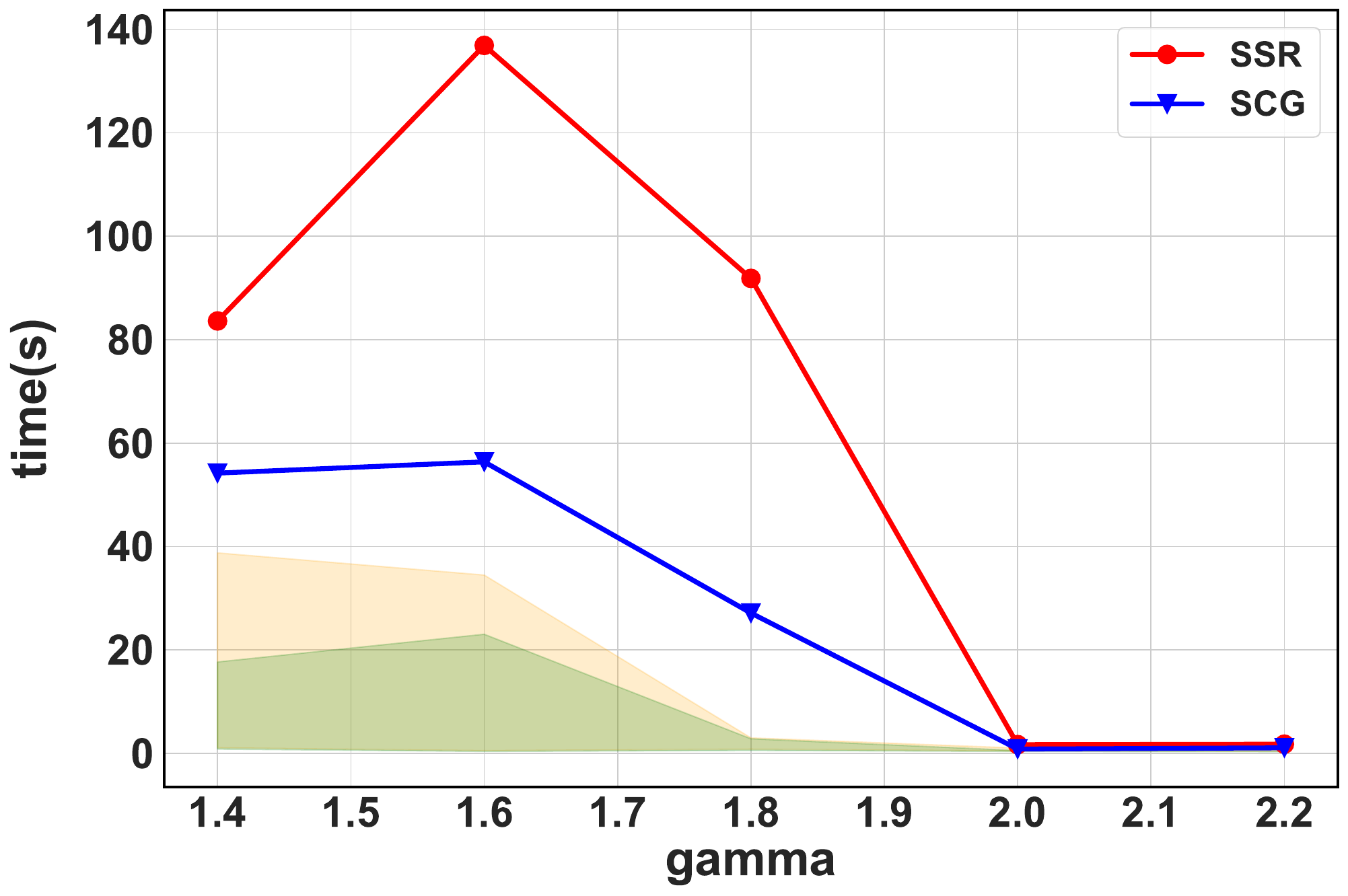}
\end{subfigure}
\begin{subfigure}
\centering
\includegraphics[width=0.32\linewidth]{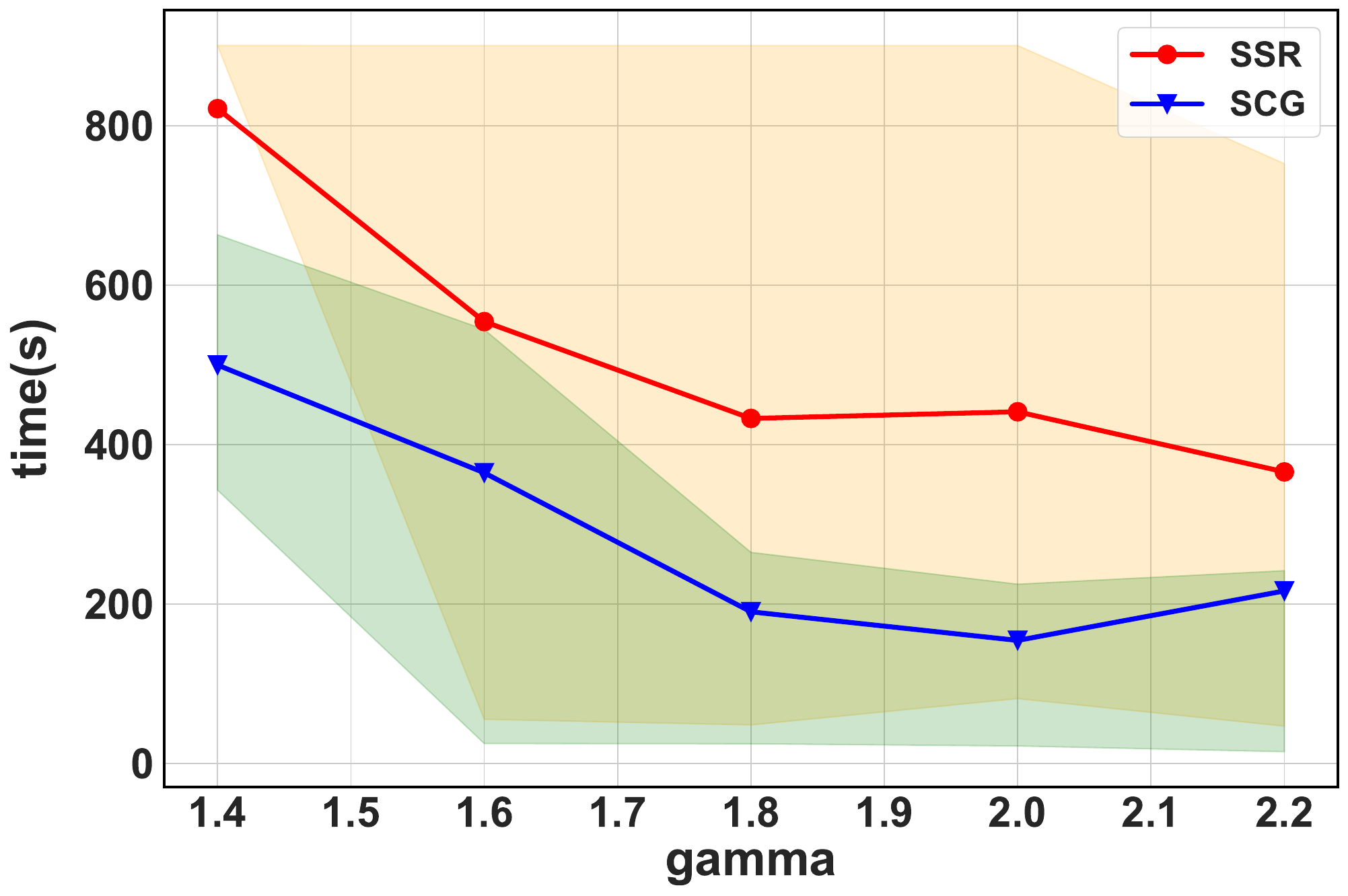}
\end{subfigure}
\begin{subfigure}
\centering
\includegraphics[width=0.32\linewidth]{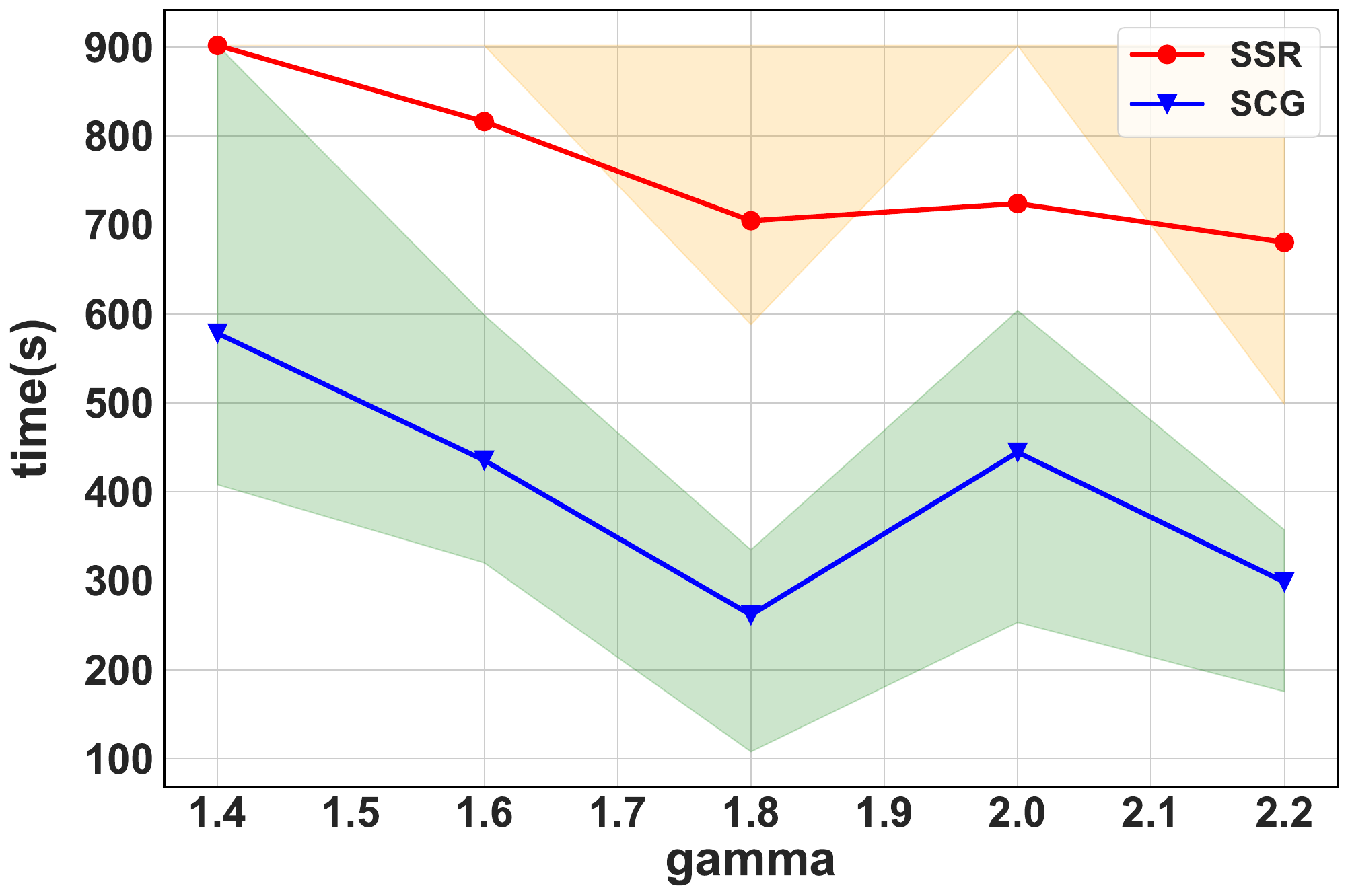}
\end{subfigure}
\begin{subfigure}
\centering
\includegraphics[width=0.32\linewidth]{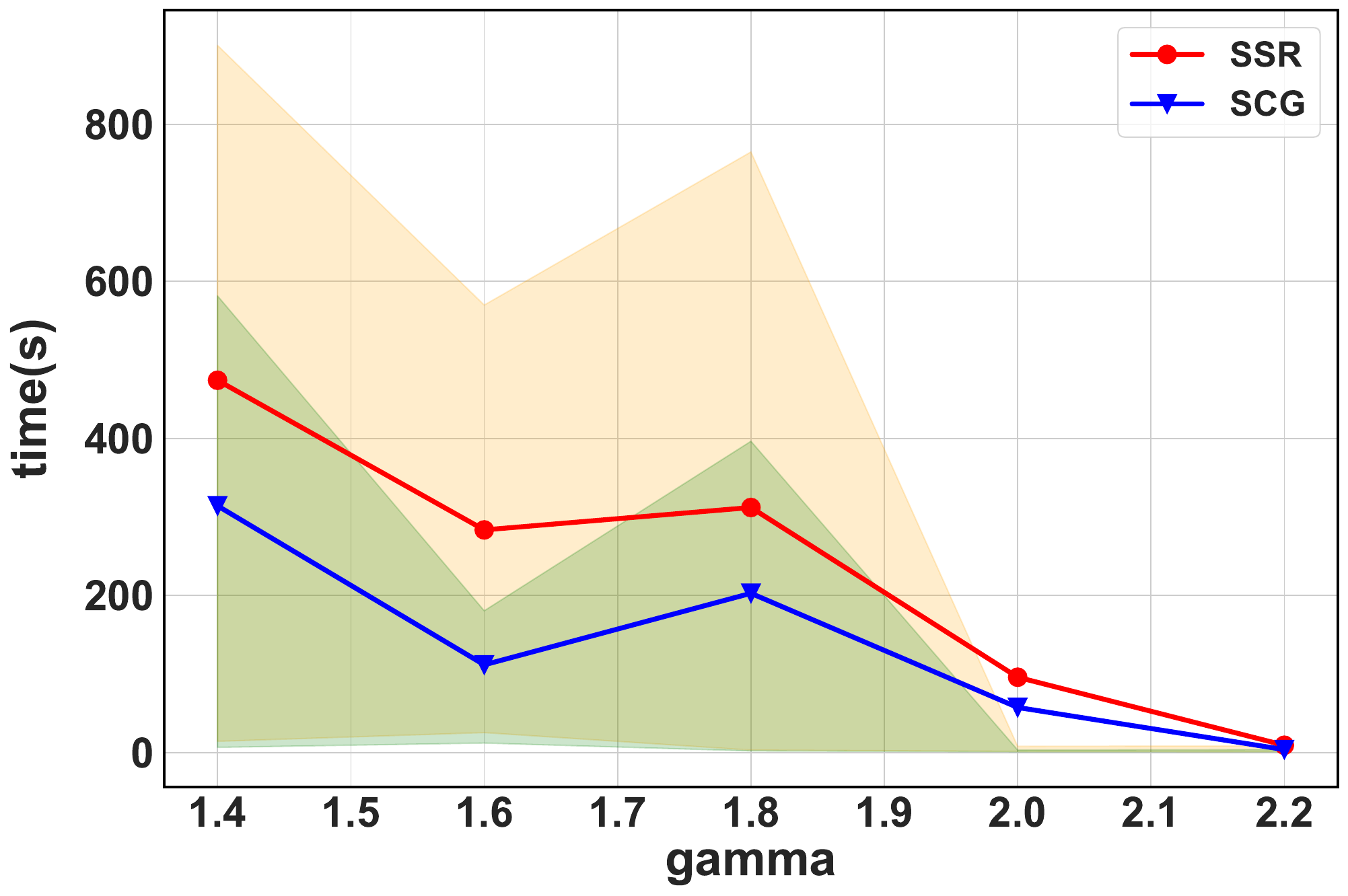}
\end{subfigure}
\begin{subfigure}
\centering
\includegraphics[width=0.32\linewidth]{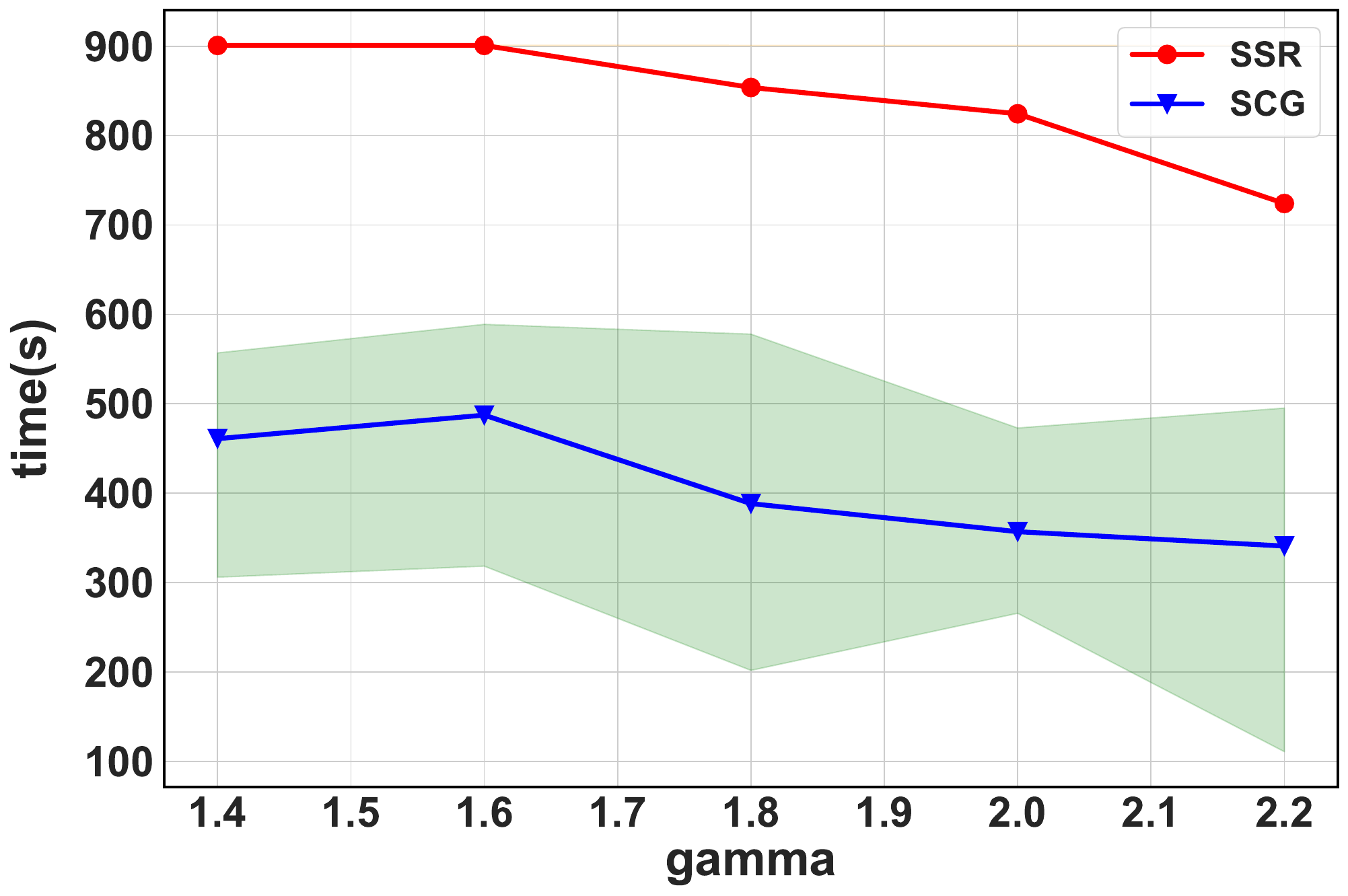}
\end{subfigure}
\begin{subfigure}
\centering
\includegraphics[width=0.32\linewidth]{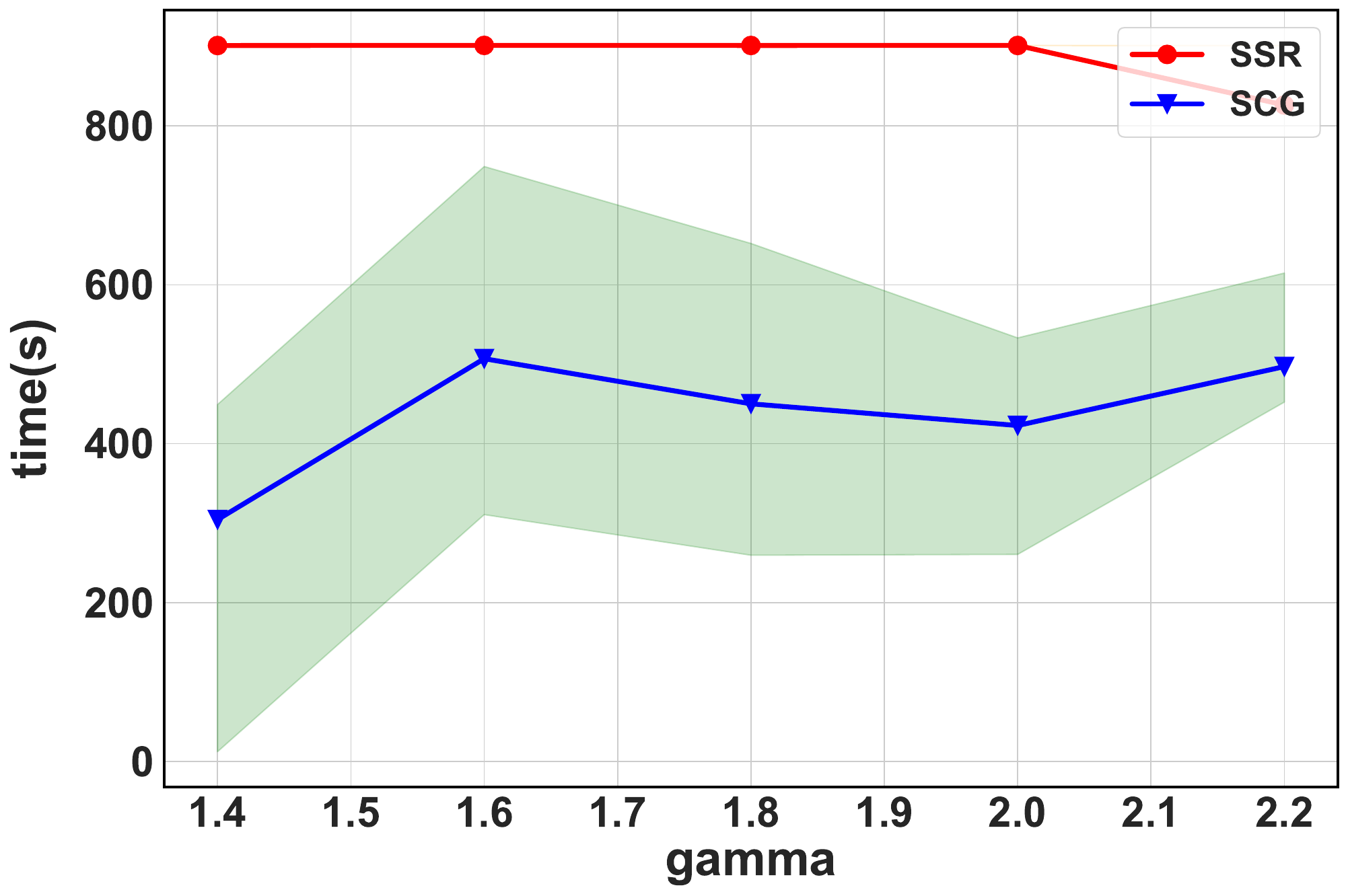}
\end{subfigure}
    \caption{\textbf{Total running time for synthetic datasets}. The parameter configurations are specified in Section \ref{sec:description-of-experimental-data}. The first and second row corresponds to $\text{SNR} = 3.5$ and $\text{SNR} = 0.5$, respectively. The three columns, from left to right, are set as $d = 100$, $d = 300$, $d = 500$, respectively. Each dot represents the average value over $10$ independent datasets. The shaded area represents the inter-quartile range (25th to 75th percentiles).}
\label{fig:synthetic-datasets}
\vskip -0.2in
\end{figure*}

\subsection{Discussions on Numerical Results}\label{sec:numerical-discussion}

\subsubsection{Factors That Influence SCG}

We start with figuring out which factors may greatly influence the effectiveness of selecting screening cuts, i.e., (i) \emph{number of screening cuts selected} and (ii) \emph{potential screening ability with respect to a selected screening cut} by SCG. Due to the space limit, the cut characteristic tables are provided in Appendix \ref{app:cuts-table}. Numerical results reported in Table~\ref{tbl:cuts_100_0.5}, \ref{tbl:cuts_100_3.5}, \ref{tbl:cuts_300_0.5}, \ref{tbl:cuts_300_3.5}, \ref{tbl:cuts_500_0.5}, \ref{tbl:cuts_500_3.5}, \ref{tbl:UJIndoor}, \ref{tbl:CNAE} show that:

\begin{enumerate}
    \item $\gamma$ value has a primary impact on the effectiveness of selecting screening cuts. In particular, as $\gamma$ decreases, the number of generated inclusive and exclusive cuts reduces for both SCG and SSR (see Table~\ref{tbl:cuts_100_0.5}, \ref{tbl:cuts_100_3.5}, \ref{tbl:cuts_300_0.5}, \ref{tbl:cuts_300_3.5}, \ref{tbl:cuts_500_0.5}, \ref{tbl:cuts_500_3.5}), which validates the theoretical result presented in Theorem~\ref{thm:generalized-cuts-generation-rule}. Additionally, we would like to point out that, in low $\gamma$ regime, \textit{with sufficient samples, SCG can still generate strong screening cuts, while SSR cannot.} See Table~\ref{tbl:UJIndoor}, \ref{tbl:CNAE} for details.
    
    \item Other factors, such as lower SNR and higher dimension $d$, have limited impacts on the effectiveness of selecting inclusive/exclusive cuts, compared with $\gamma$ value. As SNR decreases (see Table~\ref{tbl:cuts_100_0.5}, \ref{tbl:cuts_100_3.5}, and Table~\ref{tbl:cuts_300_0.5}, \ref{tbl:cuts_300_3.5}, and Table~\ref{tbl:cuts_500_0.5}, \ref{tbl:cuts_500_3.5}) or dimension increases (see Table~\ref{tbl:cuts_100_0.5}, \ref{tbl:cuts_300_0.5}, \ref{tbl:cuts_500_0.5}, and Table \ref{tbl:cuts_100_3.5}, \ref{tbl:cuts_300_3.5}, \ref{tbl:cuts_500_3.5}), the number of selected inclusive or exclusive cuts reduces for both SSR and SCG, and the length (number of binaries involved) of selected screening cuts increases for SCG. 
\end{enumerate}

In summary, SCG ensures stronger effectiveness in selecting inclusive or exclusive cuts than SSR, particularly in hard real datasets or synthetic datasets with challenging parameter configurations, as described above.

\subsubsection{Running Time Improved By SCG}

We then compare the total running time between the proposed SCG algorithm and the existing SSR. \\

\noindent \textbf{Synthetic dataset.} Figure \ref{fig:synthetic-datasets} shows that, as the dimension increases, the cuts selected by SCG lead to better numerical performances on solving \eqref{eq:ridge-regression} than the cuts screened by SSR under different choices of SNR values. 

In particular, the relative time gaps between SCG and SSR (defined as $\texttt{RTgap} := \frac{t_{\texttt{SSR}} - t_{\texttt{SCG}}}{t_{\texttt{SSR}}}$) keep increasing as dimension increases, i.e., \texttt{RTgap} is about $10 \% \sim 20 \%$ in $d = 100$, $20 \% \sim 30 \%$ in $d = 300$, and roughly $40 \%$ in $d = 500$. \\

\noindent \textbf{Real-world dataset.} Numerical results for CNAE (first two plots in Figure \ref{fig:real-datasets}) show that the running time of SSR and SCG heavily depends on the choice of $\gamma$. Specifically, as $\gamma$ decreases, the running time enjoys a ``\emph{three phase}'' trend, which can be explained as follows: 
\begin{enumerate}
    \item \textbf{Strong SCG v.s. Strong SSR.}  When $\gamma$ starts with a relatively large value, SSR could screen a significant part of binary variables, leading to comparable numerical performances in contrast with SCG.
    
    \item \textbf{Strong SCG v.s. Weak SSR.} When $\gamma$ decreases, we enter a parameter regime of $\gamma$ where SSR's screening ability is significantly weakened, while SCG maintains a good screening ability by generating effective inclusive and exclusive cuts in the pre-processing step of BnB. 

    \item \textbf{Weak SCG v.s. Weak SSR.} When $\gamma$ decreases to sufficiently small values, both methods' capabilities to generate minimal screening cuts become limited. Thus, BnB reduces to solve the original problem without any screening cuts generated in the pre-processing step.
\end{enumerate}

Numerical results for UJIndoorLoc (last two plots in Figure~\ref{fig:real-datasets}) show that the gaps between running times of SCG and SSR are more significant but unstable, compared with gaps for CNAE. Such unstable numerical performances may stem from the ill condition number of the raw UJIndoorLoc dataset. \\

\begin{figure}[t]
\begin{subfigure}
        \centering
    \includegraphics[width=0.235\linewidth]{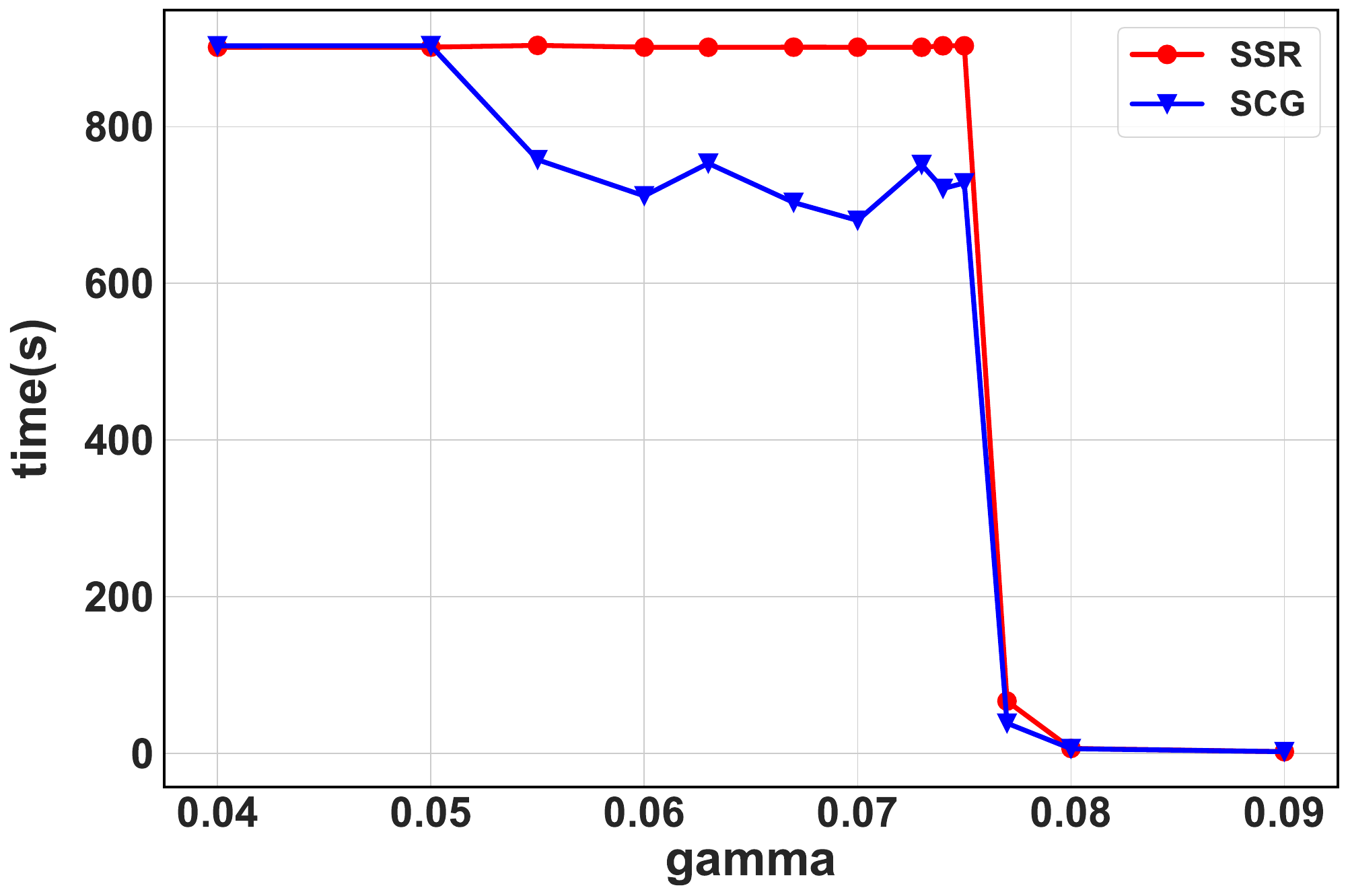}
    \end{subfigure}
    \hfill
    \begin{subfigure}
        \centering
    \includegraphics[width=0.235\linewidth]{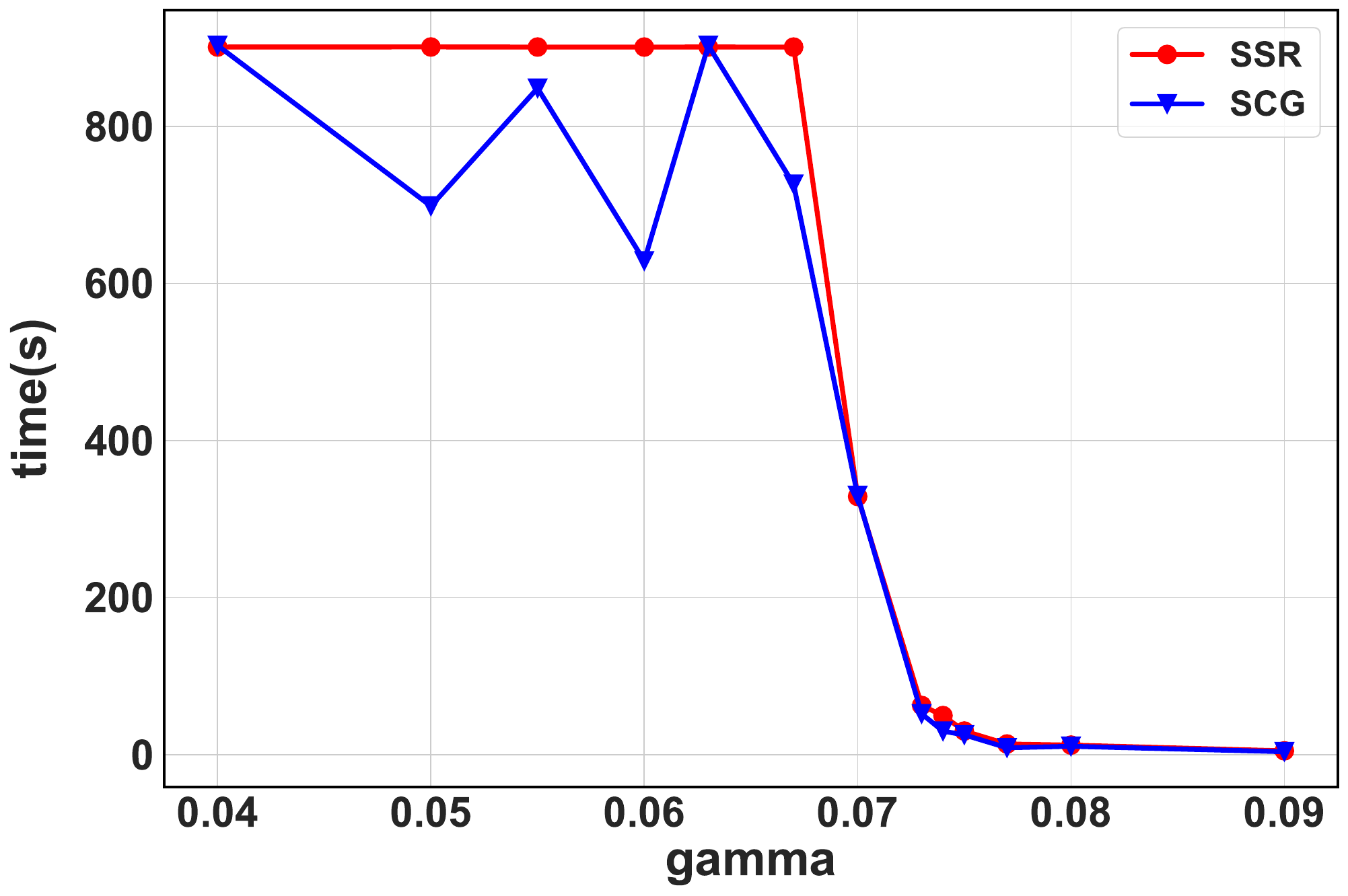}
    \end{subfigure}
    \hfill
    \begin{subfigure}
        \centering
    \includegraphics[width=0.235\linewidth]{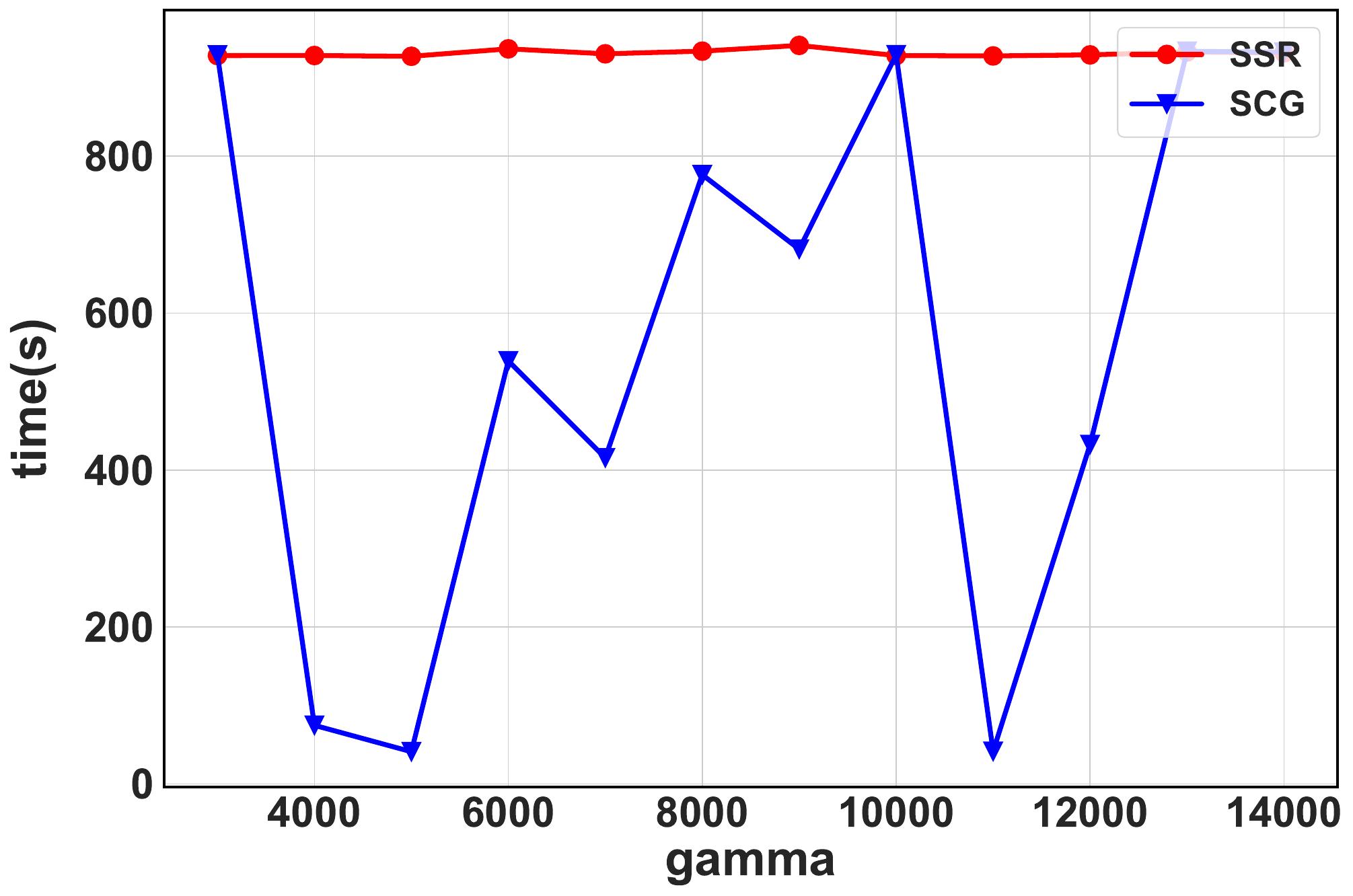}
    \end{subfigure}
    \hfill
    \begin{subfigure}
        \centering
        \includegraphics[width = 0.235\linewidth]{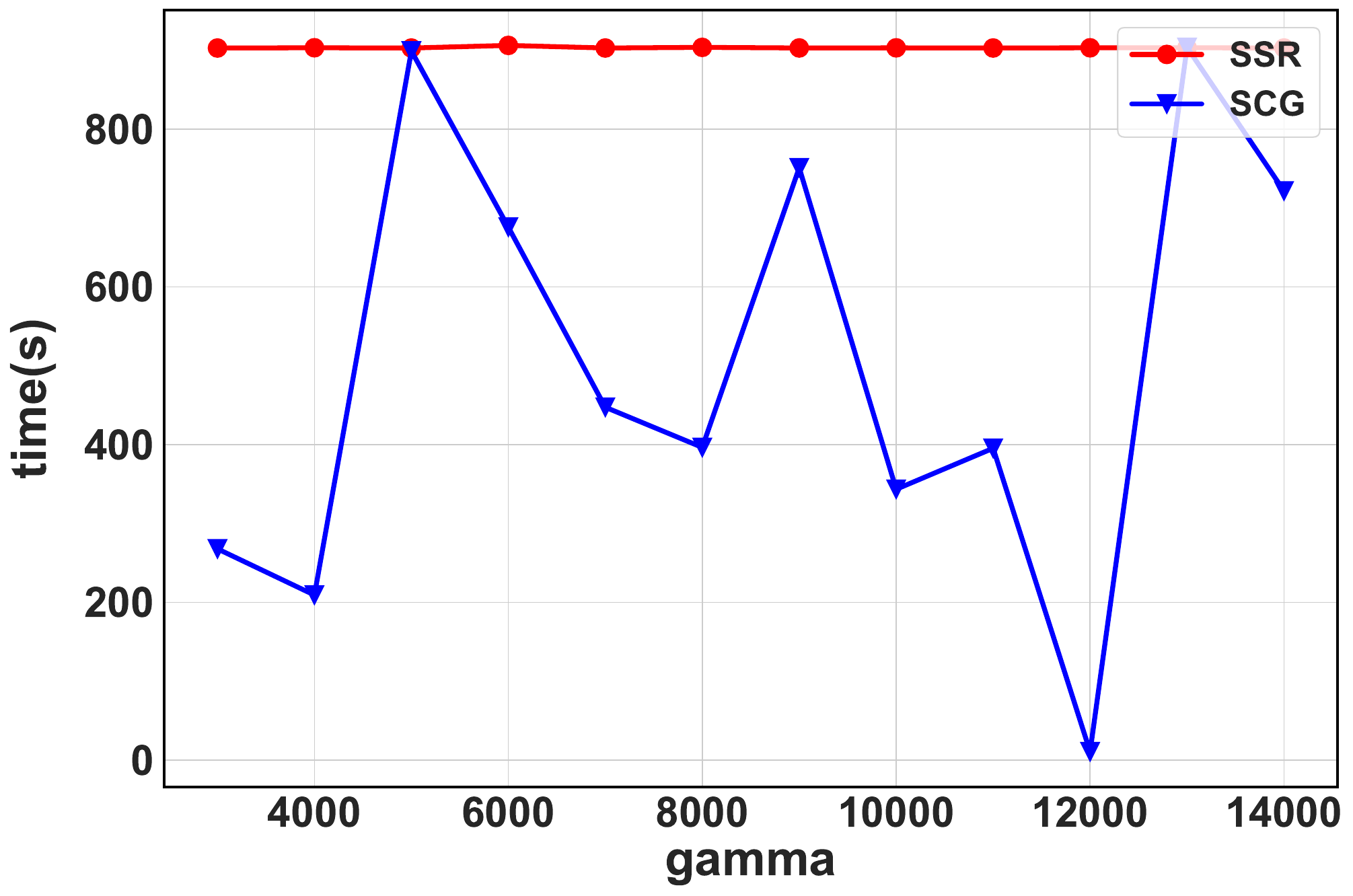}
    \end{subfigure}
\caption{\textbf{Total running time for real datasets}. The parameter configurations are specified in Section \ref{sec:description-of-experimental-data}. The four plots, arranged from left to right, represent the total running time under different $\gamma$ values with: whole CNAE dataset, reduced CNAE dataset with 200 samples, whole UJIndoorLoc dataset, reduced UJIndoorLoc dataset with 300 samples.}
\label{fig:real-datasets}
\end{figure}

\section{Conclusion \& Future Directions} \label{sec:conclusion}

In conclusion, this paper introduces a novel cut-generating method, Screening Cut Generation (SCG), to improve and enhance existing variable screening approaches used in the pre-processing step of BnB and its variants. We show that convex relaxation solutions can be more effectively leveraged to generate screening cuts that eliminate both single non-optimal binary variables and specific $\{\pm 1\}$ combinations of multiple binaries that are not part of any optimal solutions. Furthermore, we establish necessary and sufficient conditions for certifying minimal (dominating) screening cuts. Combined with the cut-selection criteria based on potential screening ability, these theoretical insights form the foundation for our screening cut generation algorithm. Numerical experiments in Section~\ref{sec:numerical} further validate the efficiency and effectiveness of the proposed method. We close with some potential SCG extensions and research questions for future investigation. The first is to study the closure of minimal screening cuts generated by every feasible point in the $k$-cardinality binary knapsack polytope \eqref{eq:knapsack-set}. Second, it is natural to extend existing screening approaches to constrained sparse ridge regression. Lastly, motivated by some preliminary numerical results, whether one can generate screening cuts that are valid under a low $\gamma$ regime case based on ``stabilized'' supports for sufficiently small $\gamma_i$ as discussed after Theorem~\ref{thm:generalized-cuts-generation-rule} remains an open and compelling question.

\subsection*{Acknowledgments}
Haozhe Tan and Guanyi Wang were supported by the Ministry of Education, Singapore, under the Academic Research Fund (AcRF) Tier-1 grant (A-8000607-00-00) 22-5539-A0001. 

\bibliographystyle{abbrvnat}
\bibliography{reference-template}

% If you have an appendix, uncomment below
\newpage
\appendix
\onecolumn

\section{Proofs in Section~\ref{sec:main-results}}
\subsection{Proof of Theorem~\ref{thm:generalized-cuts-generation-rule}} \label{app:prop-generalized-cuts-generation-rule}

\optcutgen*

\begin{proof}
    Adding constraints $z_i = 1$ for all $i \in S$ and $z_j = 0$ for all $j \in N$ to sparse ridge regression~\eqref{eq:original-formulation} gives 
    \begin{align*}
        \begin{array}{rll}
            v_{S,N}:=\underset{\bm{\beta},\bm{z}}{\operatorname{min}}& \mathcal{L}(\bm{\beta}) + \gamma \|\bm{\beta}\|_2^2\\
            \text{s.t.} & (1-z_i)\beta_i = 0~~\forall i \in [\![d]\!] \\
            &\bm{1}^{\top} \bm{z} \leq k, ~ \bm{z} \in \{0, 1\}^d \\
             & z_i = 1 ~ \forall ~ i \in S, ~~ z_i = 0 ~ \forall ~ i \in N
        \end{array}
    \end{align*}
    % Recall $v^*$ the optimal value of \eqref{eq:original-formulation}, it is obvious to see $v_{S,N} \geq v^*$.
    and its naive conic relaxation becomes:
    \begin{align*}
        \begin{array}{rll}
            \tilde{v}_{S, N} := \underset{\bm{p}}{\operatorname{max}}~ \underset{\bm{\beta},\bm{z}}{\operatorname{min}}& \mathcal{L}(\bm{\beta}) + \gamma \sum_{j=1}^d (p_j\beta_j - \frac{p_j^2}{4}z_j) \\
            \text{s.t.} & \bm{z} \in \mathcal{Z}_{S, N} := \left\{ \bm{z} \in [0, 1]^d ~|~ \bm{1}^{\top} \bm{z} \leq k, ~ z_i = 1 ~ \forall ~ i \in S, ~~ z_i = 0 ~ \forall ~ i \in N \right\}
        \end{array}
    \end{align*}
    A lower bound of $\tilde{v}_{S, N}$ can be obtained by plugging in $\hat{\bm{p}}$ (obtained from \eqref{eq:fenchel-relaxation}) to the outer maximization: 
        \begin{align} \label{eq:restrcited_lower_bound}
        \begin{array}{rll}
            v_{\tt res}:=\underset{\bm{\beta}, ~ \bm{z} \in \mathcal{Z}_{S, N}}{\operatorname{min}}& \mathcal{L}(\bm{\beta}) + \gamma \sum_{j=1}^d (\hat{p}_j\beta_j - \frac{\hat{p}_j^2}{4}z_j)
        \end{array} ~~. 
    \end{align}
    Note the objective function of \eqref{eq:restrcited_lower_bound} can be separated into two parts:
    \begin{align*}
        \mathcal{L}(\bm{\beta}) + \gamma \sum_{j=1}^d (\hat{p}_j\beta_j - \frac{\hat{p}_j^2}{4}z_j) = \bigg( \underbrace{\mathcal{L}(\bm{\beta}) + \gamma \sum_{j = 1}^d \hat{p}_j\beta_j}_{=: f_1(\bm{\beta}) }  \bigg) - \bigg( \underbrace{ \gamma \sum_{j = 1}^d \frac{\hat{p}_j^2}{4}z_j }_{=: f_2(\bm{z})}\bigg)  ~~, 
    \end{align*}
    where, given $\hat{\bm{p}}$, the first part $f_1(\bm{\beta})$ only depends on $\bm{\beta}$ and the second part $f_2(\bm{z})$ only depends on $\bm{z}$. Thus, minimizing \eqref{eq:restrcited_lower_bound} is equivalent to minimizing the first part $f_1(\bm{\beta})$ and maximizing the second part $f_2(\bm{z})$ at the same time, i.e., 
    \begin{align*}
        v_{\tt res} ~ = ~ \min_{\bm{\beta}} ~ f_1(\bm{\beta}) ~ - ~ \max_{\bm{z} \in \mathcal{Z}_{S, N}} ~ f_2(\bm{z}) 
    \end{align*}
    To minimize the first part $f_1(\bm{\beta})$, it is easy to verify that $\hat{\bm{\beta}}$ obtained from \eqref{eq:conic-relaxation} is also an optimal solution. To maximize linear objective function $f_2(\bm{z})$, we need to choose at most $k$ largest values of $\frac{\gamma}{4}\hat{p}_j^2 = \frac{\gamma}{4}\hat{w}_j$ while maintaining the feasibility requirement, i.e., $\bm{z} \in \mathcal{Z}_{S, N}$. In particular, consider the following two cases: 
    
    \begin{enumerate}
        \item Suppose $|N| > d-k$, then $\bm{1}^T \bm{z} \leq k$ is automatically satisfied.  By letting $z_i = 1$ for all $i \in [\![d]\!] \setminus N$ that includes $S$ as a subset, the resulting tuple $(S, N, [\![d]\!] \setminus (S \cup N))$ is a SCG-tuple with $S \cup C = [\![d]\!] \setminus N$ forms the support of optimal $\bm{z}$.

        \item Suppose $|N| \leq d - k$, then the remaining index set $R := [\![d]\!] \setminus (S \cup N) $ includes at least $k - |S|$ elements. Thus, to maximize $f_2(\bm{z})$, the  optimal $\bm{z}$ selects top-$(k - |S|)$ largest components in $\{\frac{\gamma}{4}\hat{w}_j\}_{j \in R}$. Still, the resulting $(S,N,C)$ is a SCG-tuple with respect to $\hat{\bm{w}}$, and set $S \cup C$ forms the support of optimal $\bm{z}$.
    \end{enumerate}

    Combining the above two cases ensures that the optimal $\bm{z}$ always takes $z_i = 1$ for $i \in S \cup C$. Then, we have
    \begin{align*}
        v_{S, N} \geq \tilde{v}_{S, N} \geq v_{\tt res} = & ~ \mathcal{L}(\hat{\bm{\beta}}) + \gamma \sum_{j=1}^d (\hat{p}_j \hat{\beta}_j) - \gamma \sum_{j \in (S \cup C)} \frac{1}{4}\hat{w}_j \\
        = & ~ \bigg[ \underbrace{\mathcal{L}(\hat{\bm{\beta}}) + \gamma \sum_{j=1}^d (\hat{p}_j \hat{\beta}_j) - \gamma \sum_{j = 1}^k \frac{1}{4}\hat{w}_{[j]} }_{= v_{\tt conic}} \bigg] + \frac{\gamma}{4} \bigg[  \sum_{i=1}^k \hat{w}_{[i]} - \sum_{i \in S} \hat{w}_i - \sum_{j \in N} \hat{w}_j \bigg] 
    \end{align*}
    If inequality~\eqref{ineq:cuts-condition} holds, we have $\frac{\gamma}{4} [  \sum_{i=1}^k \hat{w}_{[i]} - \sum_{i \in S} \hat{w}_i - \sum_{j \in N} \hat{w}_j ] > v_{\tt ub} - v_{\tt conic}$, which implies $v_{S,N} \geq v_{\tt res} > v_{\tt ub} \geq v^*$. That is to say, any feasible solution of \eqref{eq:original-formulation} with $z_i = 1$ for $i \in S$ and $z_j = 0$ for $j \in N$ cannot be optimal, and we can add screening cut $\sum_{i \in S}z_i + \sum_{j \in Z}(1-z_j) \leq |S| + |Z| - 1$ to remove all points that satisfy $\bm{z} \in \mathcal{Z}_{S, N}.$
    
    Notice that if there exists some $i \in C$ with $\frac{\gamma}{4}\hat{w}_i = 0$, any $z_i \in [0,1]$ leads to an optimal solution for \eqref{eq:restrcited_lower_bound}. Without loss of generality, we always choose $z_i = 1$ for our analysis.
\end{proof}

\subsection{Proof of Remark~\ref{rmk:AA_rule}} \label{app:AA_rule}

\begin{proof}
    let $S = \{i\}$ for some $i \in [\![d]\!]$ and $N = \emptyset$. If $i \in \texttt{Top}_k(\hat{\bm{w}})$, we have $S \cup C = \texttt{Top}_k(\hat{\bm{w}})$. The inequality (\ref{ineq:cuts-condition}) is not satisfied with $0 > \frac{4}{\gamma} \texttt{gap}$ and no cut can be generated. If $i \notin \texttt{Top}_k(\hat{\bm{w}})$, and the inequality (\ref{ineq:cuts-condition}) is satisfied, which is equivalent to $\hat{w}_{[k]} - \hat{w}_i > \frac{4}{\gamma} \tt gap$, then $(\{i\},\emptyset,\texttt{Top}_{k-1}(\hat{\bm{w}}))$ is a valid tuple. The resulting cut is $z_i \leq 0$. 
    
    Similarly, let $S = \emptyset$ and $Z = \{j\}$ for some $j \in [\![d]\!]$. If $j \notin \texttt{Top}_k(\hat{\bm{w}})$, we have $S \cup C = \texttt{Top}_k(\hat{\bm{w}})$. The inequality (\ref{ineq:cuts-condition}) is not satisfied with $0 > \frac{4}{\gamma} \texttt{gap}$ and no cut can be generated. If $j \in \texttt{Top}_k(\hat{\bm{w}})$, and the inequality (\ref{ineq:cuts-condition}) is satisfied, which is equivalent to $\hat{w}_{j} - \hat{w}_{[k+1]} > \frac{4}{\gamma} \tt gap$, then $(\emptyset,\{i\},\texttt{Top}_{k+1}(\hat{\bm{w}}) \setminus \{j\})$ is a valid tuple. The resulting cut is $z_j \geq 1$. 
\end{proof}

\subsection{Proof of Corollary~\ref{coro:valid-tuple}} \label{app:valid-tuple} 

Here, we propose a corollary of Theorem~\ref{thm:generalized-cuts-generation-rule}, which will be used in the proof of Proposition~\ref{prop:minimal-tuple}. 

\begin{restatable}{corollary}{corovalid}
\label{coro:valid-tuple}  
Suppose components in $\hat{\bm{w}} = \hat{\bm{p}} \circ \hat{\bm{p}}$ are distinct. Given a SCG-tuple $(S, N, C)$, we say it is valid if the following three conditions hold simultaneously: 
\begin{enumerate}
    \item[(1)] $S,N,C \subseteq [\![d]\!]$ are pair-wisely disjoint with $|C| = \min \{k - |S|, d - |N| - |S| \}$;
    \item[(2)] If $C \neq \emptyset$, then for all $i$ such that $\hat{w}_i \geq \underline{\hat{w}_C}$, we have $i \in (S\cup N\cup C)$;
    \item[(3)] $\sum_{i=1}^k \hat{w}_{[i]} - \sum_{i \in S} \hat{w}_i - \sum_{j \in C} \hat{w}_j > \frac{4}{\gamma} \gap$. 
\end{enumerate}
\end{restatable}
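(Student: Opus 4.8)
The plan is to show that conditions (1)--(3) of Corollary~\ref{coro:valid-tuple} are equivalent to the SCG-tuple $(S,N,C)$ being valid in the sense of Theorem~\ref{thm:generalized-cuts-generation-rule}, i.e., that $(S,N,C)$ is a genuine SCG-tuple (in the sense of Definition~\ref{def:SCG-tuple}) \emph{and} that inequality~\eqref{ineq:cuts-condition} holds. Since condition (3) is literally inequality~\eqref{ineq:cuts-condition} rewritten using $\sum_{i \in C}\hat w_i = \sum_{i \in C}\hat w_i$ (note: here $C$ itself is the complement set, so $\sum_{i\in C}\hat w_i$ is exactly the term appearing in~\eqref{ineq:cuts-condition}), the whole content of the corollary is the claim that conditions (1) and (2) together are an equivalent algebraic characterization of the requirement ``$C$ is constructed from $S,N$ and $\hat{\bm w}$ as in Definition~\ref{def:SCG-tuple}.'' So I would structure the proof as two implications.

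First I would show that a bona fide SCG-tuple satisfies (1) and (2). By Definition~\ref{def:SCG-tuple}, $S,N$ are disjoint, $R = [\![d]\!]\setminus(S\cup N)$, and $C = \{i\in R \mid \hat w_i \ge (\hat w_R)_{[c]}\}$ with $c = \min\{k-|S|,|R|\}$. Since $\hat{\bm w}$ has distinct components, exactly $c$ indices of $R$ satisfy this, so $|C| = c = \min\{k-|S|, d-|S|-|N|\}$, giving (1); disjointness of $C$ from $S,N$ is immediate since $C\subseteq R$. For (2): if $C\neq\emptyset$ then $\underline{\hat w_C} = (\hat w_R)_{[c]}$ is the $c$-th largest value among $\{\hat w_i\}_{i\in R}$, and any index $i$ with $\hat w_i \ge \underline{\hat w_C}$ is either in $R$ (hence, by the defining inequality for $C$, in $C$) or in $S\cup N$; either way $i\in S\cup N\cup C$. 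Conversely, suppose (1) and (2) hold; I must show $C$ coincides with the set prescribed by Definition~\ref{def:SCG-tuple}. Condition (1) forces $|C| = c$ and $C\subseteq R$. Condition (2) says that every index of $R$ whose $\hat w$-value is at least $\underline{\hat w_C}$ lies in $C$ (indices outside $R$ are irrelevant), and since $|C| = c$, the set $C$ consists precisely of the $c$ largest-$\hat w$ indices of $R$, i.e., $C = \{i\in R \mid \hat w_i \ge (\hat w_R)_{[c]}\}$. The degenerate case $c = 0$ (so $C=\emptyset$) must be handled separately but is trivial: (2) is vacuous and (1) forces $C=\emptyset$.

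With the equivalence ``$(S,N,C)$ is a SCG-tuple $\iff$ (1) $\wedge$ (2)'' established, I would then invoke Theorem~\ref{thm:generalized-cuts-generation-rule}: a SCG-tuple is \emph{valid} precisely when, in addition, inequality~\eqref{ineq:cuts-condition} holds, and inequality~\eqref{ineq:cuts-condition} is exactly condition (3). Hence (1)$\wedge$(2)$\wedge$(3) $\iff$ $(S,N,C)$ is a valid SCG-tuple, which is the statement of the corollary.

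The only genuinely delicate point — the ``main obstacle,'' though it is minor — is checking that condition (2) really does pin down $C$ uniquely given (1), i.e., that one cannot satisfy (1) and (2) with some $C$ that fails to be the top-$c$ set of $R$. The subtlety is that (2) is phrased as a one-directional containment (everything at least as large as $\underline{\hat w_C}$ is inside $S\cup N\cup C$), so I need the cardinality constraint $|C| = c$ from (1) together with distinctness of $\hat{\bm w}$ to upgrade this to the exact identification of $C$ as the $c$ largest entries of $R$. I would also double-check the boundary behavior when $c = |R|$ (so $C = R$, $\underline{\hat w_C} = \underline{\hat w_R}$, and (2) becomes automatic) and when $|N| > d-k$ versus $|N|\le d-k$, matching the two cases in the proof of Theorem~\ref{thm:generalized-cuts-generation-rule}; these are routine once the equivalence is set up.
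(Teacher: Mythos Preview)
Your proposal is correct and follows essentially the same approach as the paper: both reduce the corollary to showing that conditions (1) and (2) are equivalent to $C$ being the complement set prescribed by Definition~\ref{def:SCG-tuple}, with condition (3) being literally inequality~\eqref{ineq:cuts-condition}. The paper's $\Leftarrow$ direction uses an explicit auxiliary set $\mathcal{D} = \{i\in R \mid \hat w_i \ge (\hat w_R)_{[|C|]}\}$ and a contradiction/pigeonhole argument to show $C = \mathcal{D}$, which is exactly the ``delicate point'' you flagged about using the cardinality constraint plus distinctness to upgrade the one-directional containment in (2) to exact identification of $C$.
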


\begin{proof}
    Denote $R$ as the set of remaining indices, i.e. $R = [\![d]\!] \setminus (S \cup N)$.
    
    $\Rightarrow$: Define $C:=\{i \in R~|~ \hat{w}_i \geq (\hat{w}_R)_{[c]}\}$, where $c:=\operatorname{min} \{k - |S|,d - |N| - |S|\}$. If $|S| = k$, or $d = |N| + |S|$, then $C = \emptyset$ and we are done. For other cases, $c >0$ and $C \neq \emptyset$ since $c \leq d - |N| - |S| = |R|$. Clearly $|C| = c$ and $\underline{\hat{w}_C} = (\hat{w}_R)_{[c]}$. We only need to show point (ii). Prove by contradiction. Suppose there exists some $j \notin (S\cup N\cup C)$ such that $\hat{w}_j \geq \underline{\hat{w}_C}$. First, we have $j \in R$. We also have $j \in C$ since $\hat{w}_j \geq \underline{\hat{w}_C} = (\hat{w}_R)_{[c]}$. Contradiction.

    $\Leftarrow$: If $|S| = k$, or $d = |N| + |S|$, then $C = \emptyset$ and we are done. For other cases, we have $|C| >0$. Define $\mathcal{D}:=\{i \in R~|~ \hat{w}_i \geq (\hat{w}_R)_{[|C|]}\}$. This is valid since $|C| \leq d - |N| - |S| = |R|$. Now we know $|\mathcal{D}| = |C|$. It is sufficient to prove that pick any $i' \in C$, $i' \in D$. Clearly $i' \in R$. Suppose $i' \notin \mathcal{D}$, we have $\hat{w}_{i'} < (\hat{w}_R)_{[|C|]}$. Since we know $|\mathcal{D}| = |C|$, the existence of $i'$ implies that there exists some $j' \in \mathcal{D}$ but $j' \notin C$. Thus we have $\hat{w}_{j'} \geq ({\hat{w}_R})_{[|C|]} > \hat{w}_{i'} \geq \underline{\hat{w}_C}$, and $j' \notin (S\cup N\cup C)$. Contradiction with the point (ii). Thus we have $C = \mathcal{D}$ and we are done.
\end{proof}

\subsection{Proof of Proposition~\ref{prop:cut-domination}} \label{app:cut-domination}

\cutdomination*

\begin{proof}
The screening cuts generated by $(S_1, N_1, C_1), (S_2, N_2, C_2)$ are 
\begin{align*}
    \sum_{i \in S_1} z_i + \sum_{j \in N_1} (1 - z_j) \leq |S_1| + |N_1| - 1  \quad \text{and} \quad \sum_{i \in S_2} z_i + \sum_{j \in N_2} (1 - z_j) \leq |S_2| + |N_2| - 1 ~~, 
\end{align*}
respectively. Consider any binary vector $\tilde{\bm{z}} \in \{0,1\}^d$ that is feasible for the screening cuts generated by $(S_1, N_1, C_1)$, to show domination, it is sufficient to have
\begin{align*}
    & ~ \sum_{i \in S_2} \tilde{z}_i + \sum_{j \in N_2} (1 - \tilde{z}_j) \leq |S_2| + |N_2| - 1 \\
    \Leftrightarrow ~~&~ \sum_{i \in S_2} \tilde{z}_i \leq |S_2| - 1 + \sum_{j \in N_2} \tilde{z}_j \\
    \Leftrightarrow ~~&~ \sum_{i \in S_1} \tilde{z}_i + \sum_{i \in S_2 \setminus S_1} \tilde{z}_i \leq (|S_1| - 1) + (|S_2| - |S_1|) + \sum_{j \in N_1} \tilde{z}_j + \sum_{j \in N_2 \setminus N_1} \tilde{z}_j \\
    \Leftrightarrow ~~&~ \sum_{i \in S_1} \tilde{z}_i + \sum_{i \in S_2 \setminus S_1} \tilde{z}_i \leq \left(|S_1| - 1 + \sum_{j \in N_1} \tilde{z}_j \right) + (|S_2| - |S_1|) + \sum_{j \in N_2 \setminus N_1} \tilde{z}_j ~~, 
\end{align*} 
where the last inequality holds due to our assumption on $\tilde{z}$, $\sum_{i \in S_2 \setminus S_1} \tilde{z}_i \leq |S_2| - |S_1|$, and $\sum_{j \in N_2 \setminus N_1} \tilde{z}_j \geq 0$. Therefore, the domination holds. 
\end{proof}

\subsection{Proof of Proposition \ref{prop:minimal-tuple}}
\label{app:minimal-tuple}

\minimal*
\begin{proof}
Before presenting the proof of Proposition \ref{prop:minimal-tuple}, recall the following three conditions of a \emph{valid SCG-tuple} $(S,N,C)$ with respect to $\hat{\bm{w}}$:
\begin{enumerate}
    \item [(1)] $S,N,C \subseteq [\![d]\!]$ are pair-wisely disjoint with $|C| = \min \{k - |S|, d - |N| - |S| \}$; 
    \item [(2)] If $C \neq \emptyset$, for all $i$ with $\hat{w}_i \geq \underline{\hat{w}_C}$, we have $i \in (S\cup N\cup C)$; 
    \item [(3)]$\sum_{i=1}^k \hat{w}_{[i]} - \sum_{i \in S} \hat{w}_i - \sum_{j \in C} \hat{w}_j > \frac{4}{\gamma} \gap $. 
\end{enumerate}

In the following proof, given any non-empty set $C \subseteq [\![d]\!]$, we denote $\mathcal{G}(C):=\{i \in [\![d]\!]~|~\hat{w}_i \geq \underline{\hat{w}_C}\}$, and thus, the second point (ii) above can be written as: if $C \neq \emptyset$, $\mathcal{G}(C) \subseteq S \cup N \cup C$. \\

\noindent \textbf{Proof of Case 1.}  

Let $(S,N,C)$ is a valid SCG-tuple with $|N| > d - k$. Based on Definition~\ref{def:SCG-tuple}, we have $S \cup N \cup C = [\![d]\!]$ and $|S \cup C| < k$. 

$\Leftarrow$:
If $S = \emptyset$, we have $C = [\![d]\!] \setminus N$ and $|C| = d - |N| < k$. Based on Definition~\ref{def:minimal-tuple}, it is sufficient to show that $(\emptyset,N \setminus \{i\}, C)$ cannot form a valid tuple for any $i \in N$. To show invalidity, for any $i \in N$, we claim that the tuple $(\emptyset,N \setminus \{i\},C)$ is invalid since it violates point (i): $|C| \neq \operatorname{min}\{k - |\emptyset|, d - |N \setminus \{i\}| - |\emptyset|\}$, where $\operatorname{min}\{k - |\emptyset|, d - |N \setminus \{i\}| - |\emptyset|\} = d - |N| + 1$. Therefore, $(\emptyset,N,C)$ is minimal. 

$\Rightarrow$:
If $(S,N,C)$ is minimal, by Definition~\ref{def:minimal-tuple}, we have either $S = \emptyset$ or $(S \setminus \{i\},N,C \cup \{i\})$ is not valid for any $i \in S$. Suppose $C$ is non-empty. For each $i \in S$, $(S \setminus \{i\},N,C \cup \{i\})$ is valid, since (i) $|C \cup \{i\}| = |C| + 1,~\operatorname{min}(k - |S \setminus \{i\}|, d-|N| - |S \setminus \{i\}|) = d - |N| - |S| +1$ and we know $|S| + |N| + |C| = d$; (ii) it is obvious since $S \cup N \cup C = [\![d]\!]$; (iii) since $(S,N,C)$ is a valid SCG-tuple, the inequality also holds for $(S \setminus \{i\},N,C \cup \{i\})$. Thus $S = \emptyset$. \\

\noindent \textbf{Proof of Case 2.} 

Assume $(S,N,C)$ is a valid SCG-tuple with $|N| \leq d - k$ and $C = \emptyset$. Based on Definition~\ref{def:SCG-tuple} for SCG-tuple, we have $|S| = k$. Let $i^*$ be the index with respect to the largest component in $\hat{\bm{w}}$, i.e., $\hat{w}_{i^*} = \hat{w}_{[1]}$. We have the following condition $\operatorname{max}_{i \in S} \hat{w}_i < \hat{w}_{[1]}$ is equivalent to say $i^* \notin S$.

$\Leftarrow$: 
If $N = \emptyset$ and $i^* \notin S$, it is sufficient to show that $(S \setminus \{i\},\emptyset,\{i\})$ is not a valid SCG-tuple for any $i \in S$. This is true because it violates the second point: (ii) for any $j$ such that $\hat{w}_j \geq \hat{w}_i$, we have $j \in (S \setminus \{i\} \cup \emptyset \cup \{i\}) = S$. However,  $\hat{w}_{i^*} \geq \hat{w}_i$ and $i^* \notin S$ by assumption. Contradiction.

$\Rightarrow$: 
If $(S,N,\emptyset)$ is minimal, we show two conditions listed in Case 2, respectively, by contradiction.  
\begin{itemize}
    \item Suppose $N \neq \emptyset$. Then $(S,N \setminus \{i\},\emptyset)$ is a valid SCG-tuple for any $i \in N$, since (i) $|\emptyset| = 0$ and $\operatorname{min} (k - |S|,d - |N \setminus \{i\}| - |S|) = k - |S| = 0$; (ii) no need to verify; (iii) since $(S,N,\emptyset)$ is a valid tuple, the inequality also holds for $(S,N \setminus \{i\},\emptyset)$. Contradiction with $(S,N,\emptyset)$ being minimal. Thus $N = \emptyset$. 
    \item Suppose $i^* \in S$. We claim that $(S \setminus \{i^*\},\emptyset,\{i^*\})$ is also a valid SCG-tuple, since (i) $|\{i^*\}| = 1$ and $\operatorname{min} (k - |S\setminus \{i^*\}|,d - |\emptyset| - |S \setminus \{i^*\}|) = k - |S \setminus \{i^*\}| = k - |S| + 1 = 1$; (ii) $C = \{i^*\}$, and it's obvious; (iii) since $(S,\emptyset,\emptyset)$ is a valid tuple, the inequality also holds for $(S \setminus \{i^*\},\emptyset ,\{i^*\})$. Contradiction with $(S,\emptyset,\emptyset)$ being minimal. Thus we have $i^* \notin S$.
\end{itemize}

\noindent \textbf{Proof of Case 3.} 

Assume $(S,N,C)$ is a valid SCG-tuple with $|N| \leq d - k$ and $C \neq \emptyset$. Based on validity, we have $|S \cup C| = k$. Besides, $S$ and $N$ cannot be empty at the same time, in order to generate a valid screening cut. Therefore, we consider the following three scenarios. \\

\noindent\textbf{1. The first scenario: $S = \emptyset$, $N \neq \emptyset$.} It is sufficient to focus on the $N$ conditions. 

$\Leftarrow$:
 Suppose $\underline{\hat{w}_N} \geq \underline{\hat{w}_C}$. Then for any $i \in N$, $(\emptyset,N \setminus \{i\},C)$ is not valid since it violates the second point (ii) $i \notin \left( (\emptyset \cup N \setminus \{i\}) \cup C \right)$ and $\hat{w}_i \geq \underline{\hat{w}_C}$.

$\Rightarrow$:
Suppose, for any $i \in N$, we have $(\emptyset,N \setminus \{i\},C)$ is not valid, then we want to show $\underline{\hat{w}_N} \geq \underline{\hat{w}_C}$. Prove by contradiction. Suppose there exists some $j \in N$ such that $\hat{w}_j < \underline{\hat{w}_C}$. Then $(\emptyset, N \setminus \{j\},C)$ is a valid tuple by verifying that (i) $|C| = \operatorname{min} \{k, d - |N \setminus \{j\}|\} = k$; (ii) We have $\mathcal{G}(C) \subseteq (N \cup C)$ based on the validness of SCG-tuple $(\emptyset,N,C)$. Based on the assumption on $j$, we have $j \notin \mathcal{G}(C)$, and thus $\mathcal{G}(C) \subseteq ((N \setminus \{j\}) \cup C)$; (iii) the inequality holds since SCG-tuple $(\emptyset,N,C)$ is a valid tuple by assumption. Then, we get a contradiction which implies $\underline{\hat{w}_N} \geq \underline{\hat{w}_C}$. \\

\noindent\textbf{2. The second scenario: $S \neq \emptyset$, $N = \emptyset$.} It is sufficient to focus on the $S$ conditions.

$\Leftarrow$:
If there exists some $j \notin (S \cup \emptyset \cup C)$ such that $\underline{\hat{w}_C} > \hat{w}_j > \operatorname{max}_{i \in S} \hat{w}_i$, we want to show that for any $ i' \in S$, $(S \setminus \{i'\},\emptyset, C \cup \{i'\})$ is not valid. This is true, since it violates the second point (ii) we have $\hat{w}_j > \operatorname{max}_{i \in S} \hat{w}_i \geq \hat{w}_{i'} \geq \underline{\hat{w}_{C \cup \{i'\}}}$ and $j \notin (S \cup C)$.

$\Rightarrow$:
If $(S \setminus \{i\},\emptyset,C \cup \{i\})$ is invalid for all $i \in S$, we show conditions listed in Case 3 using contradiction. 
\begin{itemize}
    \item Suppose there exists some $i' \in S$ such that $\hat{w}_{i'} \geq \underline{\hat{w}_C}$. Then $(S \setminus \{i'\}, \emptyset, C \cup \{i'\})$ is valid, by verifying that (i) $|C \cup \{i'\}| = |C| + 1$, and $\operatorname{min}(k - |S \setminus \{i'\}|, d - |S \setminus \{i'\}|) = k - |S| + 1$. We have $|S + |C| = k$ by assumption; (ii) since $(S,\emptyset,C)$ is valid, $\mathcal{G}(C) = \mathcal{G}(C \cup \{i'\}) \subseteq (S \cup C)$; (iii) since $(S,\emptyset,C)$ is valid, the inequality also holds for $(S \setminus \{i'\},\emptyset, C \cup \{i'\})$. Contradiction. Thus we have $\underline{\hat{w}_C} > \operatorname{max}_{i \in S} \hat{w}_i$.
    \item Suppose there does not exist $j \notin (S \cup \emptyset \cup C)$ such that $\underline{\hat{w}_C} > \hat{w}_j > \operatorname{max}_{i \in S} \hat{w}_i$. Denote $i^* \in S$ such that $\hat{w}_{i^*} = \operatorname{max}_{i \in S} \hat{w}_i$. We have $(S \setminus \{i^*\},\emptyset,C \cup \{i^*\})$ is a valid tuple, by verifying (i) $|C \cup \{i^*\}| = |C| + 1$, and $\operatorname{min} \{k - |S \setminus \{i^*\}|,d - |S \setminus \{i^*\}|\} = k - |S| + 1$. We have $|S| +|C| = k$ by assumption; (ii) since $(S,\emptyset,C)$ is valid, we have $\mathcal{G}(C)\subseteq (S \cup C)$. Thus $\mathcal{G}(C \cup \{i^*\}) = \mathcal{G}(C) \cup \{i^*\} \subseteq (S \cup C)$; (iii) since $(S,\emptyset,C)$ is a valid tuple, the inequality also holds for $(S \setminus \{i^*\},\emptyset,C \cup \{i^*\})$. Contradiction. Thus there exist some $j \notin (S \cup C)$ such that $\underline{\hat{w}_C} > \hat{w}_j > \operatorname{max}_{i \in S} \hat{w}_i$.
\end{itemize}

\noindent\textbf{3. The third scenario: $S \neq \emptyset$, $N \neq \emptyset$.} 

$\Leftarrow$: First we show that for any $i \in N$, $(S,N \setminus \{i\},C)$ is not valid. This is true, since it violates the second point (ii) given $\underline{\hat{w}_N} \geq \underline{\hat{w}_C}$, $i \notin \left(S \cup (N \setminus \{i\}) \cup C \right)$ and $\hat{w}_i \geq \underline{\hat{w}_C}$.

Next we show that for any $i' \in S$, $(S \setminus \{i'\},N,C \cup \{i'\})$ is not valid. Given some $j \notin (S \cup N \cup C)$ such that $\underline{\hat{w}_C} > \hat{w}_j > \operatorname{max}_{i \in S} \hat{w}_i$, it violates the second point (ii) $\hat{w}_j > \operatorname{max}_{i \in S} \hat{w}_i \geq \hat{w}_{i'} \geq \underline{\hat{w}_{C \cup \{i'\}}}$ and $j \notin (S \cup N \cup C)$. Then $(S \setminus \{i\},N,C \cup \{i\})$ is not valid as claimed.

$\Rightarrow$: First we show that $\underline{\hat{w}_N} \geq \underline{\hat{w}_C}$. Prove by contradiction. Suppose there exists some $j \in N$ such that $\hat{w}_j < \underline{\hat{w}_C}$. Then $(S, N \setminus \{j\},C)$ is a valid tuple by verifying that (i) $|C| = \operatorname{min} \{k - |S|, d - |N \setminus \{j\}| - |S|\} = k - |S|$; (ii) since $(S,N,C)$ is valid, we have $\mathcal{G}(C)\subseteq (S \cup N \cup C)$. Moreover, $j \notin \mathcal{G}(C)$ implies $\mathcal{G}(C) \subseteq (S \cup (N \setminus \{j\}) \cup C)$; (iii) the inequality holds since $(S,N,C)$ is a valid tuple by assumption. Contradiction. Thus we have $\underline{\hat{w}_N} \geq \underline{\hat{w}_C}$.

Next, we show that there exists some $j \notin (S \cup N \cup C)$ such that $\underline{\hat{w}_C} > \hat{w}_j > \operatorname{max}_{i \in S} \hat{w}_i$. Similarly, we prove by using contradiction twice. 

\begin{itemize}
    \item Suppose there exists some $i' \in S$ such that $\hat{w}_{i'} \geq \underline{\hat{w}_C}$. Then $(S \setminus \{i'\}, N, C \cup \{i'\})$ is valid, by verifying that (i) $|C \cup \{i'\}| = |C| + 1$, and $\operatorname{min}(k - |S \setminus \{i'\}|, d - |N| - |S \setminus \{i'\}|) = k - |S| + 1$. We have $|S + |C| = k$ by assumption; (ii) since $(S,N,C)$ is valid, we have $\mathcal{G}(C) = \mathcal{G}(C \cup \{i'\}) \subseteq (S \cup N \cup C)$; (iii) since $(S,N,C)$ is valid, the inequality also holds for $(S \setminus \{i\},N, C \cup \{i\})$. Contradiction. Thus we have $\underline{\hat{w}_C} > \operatorname{max}_{i \in S} \hat{w}_i$.
    \item Since we have proved that $\underline{\hat{w}_N} \geq \underline{\hat{w}_C}$, it is sufficient to show that there exists some $j \notin (S \cup C)$ such that $\underline{\hat{w}_C} > \hat{w}_j > \operatorname{max}_{i \in S} \hat{w}_i$. Prove by contradiction. Suppose such $j$ does not exist. Denote $i^* \in S$ such that $\hat{w}_{i^*} = \operatorname{max}_{i \in S} \hat{w}_i$. We have $(S \setminus \{i^*\},N,C \cup \{i^*\})$ is a valid tuple, by verifying (i) $|C \cup \{i^*\}| = |C| + 1$, and $\operatorname{min} \{k - |S \setminus \{i^*\}|,d - |N| - |S \setminus \{i^*\}|\} = k - |S| + 1$. We have $|S| +|C| = k$ by assumption; (ii) since $(S,N,C)$ is valid, we have $\mathcal{G}(C)\subseteq (S \cup N \cup C)$. Thus $\mathcal{G}(C \cup \{i^*\}) = \mathcal{G}(C) \cup \{i^*\} \subseteq (S \cup N \cup C)$; (iii) since $(S,N,C)$ is a valid tuple, the inequality also holds for $(S \setminus \{i^*\},N,C \cup \{i^*\})$. Contradiction. Thus there exist some $j \notin (S \cup N \cup C)$ such that $\underline{\hat{w}_C} > \hat{w}_j > \operatorname{max}_{i \in S} \hat{w}_i$.
\end{itemize}

\end{proof}

\subsection{Proof of Proposition \ref{prop:knapsack-cuts-generation}} \label{app:knapsack-cuts-generation}

\knapsackcuts*

\begin{proof}
    We separately give proofs on the two scenarios presented in Proposition (\ref{prop:knapsack-cuts-generation}). 

    \noindent \textbf{1. The first scenario:} $|\supp(\bm{y})| < k$
    
    In this case, we have $|S \cup C| = |\supp(\bm{y})| < k$, and $ N = [\![d]\!] \setminus (S \cup C) = [\![d]\!] \setminus \supp(\bm{y})$ based on validity. From Proposition (\ref{prop:minimal-tuple}), we know that if $|N| > d -k$, then $(S,N,C)$ is minimal if and only if $S = \emptyset$. Thus we can construct the minimal tuple as $(S,N,C) = (\emptyset,[\![d]\!] \backslash \supp(\bm{y}),\supp(\bm{y}))$. 

    \noindent \textbf{2. The second scenario:} $|\supp (\bm{y})| = k$

    In this case, we have $|S \cup C| = |\supp (\bm{y})| = k$, and $|N| \leq d-k$ based on validity. 

    \begin{enumerate}
        \item First, we construct a minimal tuple with $C = \emptyset$. Thus we have $S = \supp (\bm{y})$. From Proposition (\ref{prop:minimal-tuple}), we know that $(\supp(\bm{y}),N,\emptyset)$ is minimal if and only if $N = \emptyset$, and $\operatorname{max}_{i \in \supp(\bm{y})} \Tilde{w}_i < \Tilde{w}_{[1]}$. Therefore, $(\supp(\bm{y}),\emptyset,\emptyset)$ is minimal if and only $\operatorname{max}_{i \in \supp(\bm{y})} \Tilde{w}_i < \Tilde{w}_{[1]}$, which is equivalent to $B_1(\bm{y}) \neq \emptyset$ since $\tilde{\bm{w}}$ is in decreasing order.
        \item Second, we construct minimal tuple with $C \neq \emptyset$. Denote $i^* \in C \subseteq \supp(\bm{y})$ the index with respect to the smallest component in $\Tilde{w}_C$, i.e., $\Tilde{w}_{i^*} = \underline{\Tilde{w}_C}$. We have two subcases:
    \begin{itemize}
        \item  If $i^* \in A_m(\bm{y})$, then $N \supseteq \comp(\supp(\bm{y}))$ to ensure validity. Based on Proposition (\ref{prop:minimal-tuple}), we have minimal tuple as $(S,N,C) = (\emptyset,\comp(\supp(\bm{y})),\supp(\bm{y}))$.
        \item If $i^* \in A_{\ell}(\bm{y})$ for some $\ell = 1, \ldots, m-1$, similarly, we can construct minimal SCG-tuple as $(S,N,C) = (\cup_{j= \ell+1}^m A_j(\bm{y}), ~ \cup_{j=1}^l B_j(\bm{y}), ~ \cup_{j=1}^l A_j(\bm{y}))$.
    \end{itemize}
    \end{enumerate}
\end{proof}

\subsection{Proof of bounds for potential screening ability} \label{app:potential-screening-ability}

\begin{proof}
    For any screening cut, it removes all the feasible points that satisfy $z_i = 1,$ for all $i \in S$ and $z_j = 0,$ for all $j \in N$. If $|N| > d - k$, then based on validity, we have $|S| + |N| + |C| = d$ and thus $(S\cup N \cup C) = [\![d]\!]$. The remaining index set is $C$ and we can choose up to $|C|$ one's within $C$. We have at most $\sum_{i=0}^{|C|}{|C| \choose i} = \sum_{i=0}^{|C|}{d - |S \cup N|\choose i}$ eliminated points. Similarly, if $|N| \leq d-k$, we have $|S| + |C| = k$ and $(S \cup N \cup C) \subseteq [\![d]\!]$. The remaining index set is $[\![d]\!] \backslash (S \cup N)$ and we can choose up to $|C|$ one's. Thus we have at most $\sum_{i=0}^{|C|}{d - |S \cup N|\choose i}$ eliminated points.
\end{proof}

\subsection{Proofs in Section~\ref{sec:implementation-technique}} \label{app:algorithm}

\subsubsection{Inclusive Cuts Generation} \label{app:inclusive-cuts}

\inclusive*
\begin{proof}
    We show two results in Claim~\ref{clm:inclusive} separately.  \\
    \textbf{Proof on the first result.} One direction is obvious and we suppose $\mathcal{T}_i \neq \emptyset$. Pick any $\bm{y}' \in \mathcal{T}_i$. Given that $\Tilde{\bm{w}}$ is in decreasing order and $\bm{y}' \in K(\Tilde{\bm{w}})$, we have $\Tilde{\bm{w}}^T \bm{v} \leq \Tilde{\bm{w}}^T \bm{y}' < c(\Tilde{\bm{w}})$, since $\bm{v} \in \{0,1\}^d$ and $\supp(\bm{v}) = [\![i - (k-1),i]\!]$ as assumed. Thus $\bm{v} \in K(\Tilde{\bm{w}})$.\\
    \textbf{Proof on the second result.} Given $\mathcal{T}_{i'} \neq \emptyset$, it is sufficient to show that $\mathcal{T}_{i'+1} \neq \emptyset$. Based on the first part, we have $\bm{v}_{i'} \in K(\Tilde{\bm{w}})$ and $\supp(\bm{v}_{i'}) = [\![i' - (k - 1),i']\!]$. We construct $\bm{v}_{i'+1} \in \{0,1\}^d$ and $\supp(\bm{v}_{i'+1}) = [\![i'+1 - (k-1),i'+1]\!]$. It is obvious that $\bm{v}^T_{i'+1} \Tilde{\bm{w}} < \bm{v}^T_{i'} \Tilde{\bm{w}} < c(\Tilde{\bm{w}})$. Then $\bm{v}_{i'+1} \in K(\Tilde{\bm{w}})$ and we are done.
\end{proof}

\subsubsection{Exclusive Cuts Generation} \label{app:exclusive-cuts} 

\exclusive*
\begin{proof}
    We show two results in Claim~\ref{clm:exclusive} separately. \\
    \textbf{Proof on the first result.} One direction is obvious and we suppose $\mathcal{J}_i \neq \emptyset$. Pick any $\bm{y}' \in \mathcal{J}_i$. Given that $\Tilde{\bm{w}}$ is in decreasing order and $\bm{y}' \in K(\Tilde{\bm{w}})$, we have $\Tilde{\bm{w}}^T \bm{v} \leq \Tilde{\bm{w}}^T \bm{y}' < c(\Tilde{\bm{w}})$, since $\bm{v} \in \{0,1\}^d$ and $\supp(\bm{v}) = [\![i]\!] \cup [\![d - (k-i-1),d]\!]$ as assumed. Thus $\bm{v} \in K(\Tilde{\bm{w}})$. \\
    \textbf{Proof on the second result.} Given $\mathcal{J}_{i'} \neq \emptyset$, it is sufficient to show that $\mathcal{J}_{i'-1} \neq \emptyset$. Based on the first part, we have $\bm{v}_{i'} \in K(\Tilde{\bm{w}})$ and $\supp(\bm{v}_{i'}) = [\![i']\!] \cup [\![d - (k-i'-1),d]\!]$. We construct $\bm{v}_{i'-1} \in \{0,1\}^d$ and $\supp(\bm{v}_{i'-1}) = [\![i'-1]\!] \cup [\![d - (k-i'),d]\!]$. It is obvious that $\bm{v}^T_{i'-1} \Tilde{\bm{w}} < \bm{v}^T_{i'} \Tilde{\bm{w}} < c(\Tilde{\bm{w}})$. Then $\bm{v}_{i'-1} \in K(\Tilde{\bm{w}})$ and we are done.
\end{proof}

\subsubsection{SCG Algorithm Design}

\generateall*

\begin{proof}  
    We show results on inclusive cuts generation and exclusive cuts generation separately. \\
    \textbf{Inclusive cuts.} Based on previous analysis, for each $\bm{y} \in K(\Tilde{\bm{w}})$ with $|\supp(\bm{y})| < k$, the minimal inclusive cut has length $|[\![d]\!] \setminus \supp(\bm{y})| > d-k \geq \sharp_{\tt len}$. Therefore, we have $\bm{y} \in \mathcal{O}$ to generate such inclusive cuts.

    For any $\bm{y} \in \mathcal{O}$ that can generate minimal inclusive cut with length at most $\sharp_{\tt len}$, it is sufficient to prove that $\bm{y} \in \mathcal{T}_{i'}$ for some $i' \in [\![k+1,d]\!]$ and $\mathcal{T}_{i'}$ shall be found in STEP 2 of Algorithm \ref{alg:alg-framework}.
    
    Given a maximum length $\sharp_{\tt len}$, we have $|\comp(\supp(\bm{y}))| = i' - k \leq \sharp_{\tt len}$, which is equivalent to $i' \leq k + \sharp_{\tt l}$. Therefore, we have the searching index $s$ initialized in STEP 1 satisfies $s \leq i'$. Since $\sharp_{\tt max} = + \infty$, the Algorithm \ref{alg:alg-framework} will not stop until $s = \operatorname{min}\{k + \sharp_{\tt l},d\}$. Thus we have $s = i'$ before termination. 
    
    \noindent \textbf{Exclusive cuts.} Based on previous analysis, for each $\bm{y} \in K(\Tilde{\bm{w}})$ with $|\supp(\bm{y})|  = k$ and $B_1(\bm{y}) \neq \emptyset$, the minimal exclusive cut has length $|\supp(\bm{y})| =k > \sharp_{\tt len}$. Therefore, we have $\bm{y} \in \mathcal{O}'$ to generate such exclusive cuts.

    For any $\bm{y} \in \mathcal{O}'$ that can generate minimal exclusive cut with length at most $\sharp_{\tt len}$, it is sufficient to prove that $\bm{y} \in \mathcal{T}_{i'}'$ for some $i' \in [\![1,k-1]\!]$ and $\mathcal{T}_{i'}'$ shall be found in STEP 2 of Algorithm \ref{alg:alg-framework}.

    Given a maximum length $\sharp_{\tt l}$, we have $|\cup_{j=2}^m A_j(\bm{y}')| = k - i' \leq \sharp_{\tt len}$, which is equivalent to $i' \geq k - \sharp_{\tt l}$. Therefore, in STEP 1, we have the initial searching index $s \geq i'$. Since $\sharp_{\tt max}  = + \infty$, the Algorithm \ref{alg:alg-framework} will not stop until $s = \operatorname{max}\{k - \sharp_{\tt l},1\}$. Thus we have $s = i'$ before termination.
\end{proof}

Here, we present the pseudo-code (Algorithm~\ref{alg:recursive-iteration}) for the recursion used in STEP 2 of Algorithm~\ref{alg:alg-framework}. Given a search index $s \in \texttt{SR}_{\tt inc}$ or $\texttt{SR}_{\tt exc}$, the computational complexity of enumerating $\mathcal{T}_s$ or $\mathcal{T}_s'$ is $O\big((k-1)^2 \cdot{s-1 \choose k-1}\big)$ or $O\big((k-s)^2 \cdot {d- s -1 \choose k-s}\big)$ for inclusive or exclusive cuts generation. \\

\begin{algorithm} \label{alg:recursive-iteration}
\caption{Recursion in STEP 2 of Algorithm~\ref{alg:alg-framework}}

\KwIn{Sparsity level $k$, sorted $\tilde{\bm{w}}$,
current search index $s$ in STEP 2. 
}

Set weight limit $\texttt{WL} = c(\tilde{\bm{w}}) - \tilde{\bm{w}}_s$ for inclusive cuts, $\texttt{WL} = c(\tilde{\bm{w}}) - \sum_{j = 1}^s \tilde{w}_j$ for exclusive cuts. 

Set searching range $\texttt{SR} = [\![s-1]\!]$ for inclusive cuts, $\texttt{SR} = [\![s + 2, d]\!]$ for exclusive cuts. 

Set select number $\texttt{SN} = k - 1$ for inclusive cuts, $\texttt{SN} = k - s$ for exclusive cuts.

Initialized target partition subset $\mathcal{T}_s = \emptyset$ and temporary enumerating support 
$\texttt{T}_{\texttt{supp}} = \emptyset$.

\SetKwFunction{FRecursive}{\tt RI}
\SetKwProg{Fn}{Function}{:}{}
\SetKw{WL}{\texttt{WL}}
\SetKw{SR}{\texttt{SR}}
\SetKw{SN}{\texttt{SN}}
\SetKwFunction{FSelectAll}{SelectAll}
\SetKwProg{Fn}{Function}{:}{}

\Fn{   \FRecursive{\WL, $\tilde{\bm{w}}$, \SR, \SN,  $\mathcal{T}_s$, $\texttt{T}_{\texttt{supp}}$  }     }{
    \For{index $i \in [\![|\SR| - \SN +1]\!]$}{
        Select $i$th smallest index in $\texttt{SR}$, i.e., index $j = \texttt{SR}[i]$\;
        Compute $\texttt{min-weight} = \tilde{w}_j + \sum_{t=|\texttt{SR}|-\texttt{SN}+2}^{|\tt SR|} \tilde{w}_{\texttt{SR}[t]}$\;
        
        \If{$\texttt{min-weight} < \WL$}{ 
            Add index $j$ to current enumerating support, i.e., $\texttt{T}_{\texttt{supp}} = \texttt{T}_{\texttt{supp}} \cup \{j\}$\;
            
            \eIf{$\texttt{SN} = 1$}{
                Add current temporary enumerating support $\texttt{T}_{\texttt{supp}}$ into $\mathcal{T}_s$, i.e., $\mathcal{T}_s = \mathcal{T}_s \cup \{\texttt{T}_{\texttt{supp}} \}$ \;
            }{
                Update weight limit $\WL = \WL - \tilde{w}_j$\;
                Remove all the indices that are smaller than or equal to $j = \texttt{SR}[i]$ from $\texttt{SR}$, i.e., $\texttt{SR} = \{\texttt{SR}[i+1], \ldots, \texttt{SR}[|\SR|]\}$\;
                Update current $\SN = \SN-1$\;
                \FRecursive{$\WL$, $\tilde{\bm{w}}$, $\SR$, $\SN$, $\mathcal{T}_s$, $\texttt{T}_{\texttt{supp}}$}\;
            }
            
            Remove index $j$ from temporary support set $\texttt{T}_{\texttt{supp}}$, i.e.,  $\texttt{T}_{\texttt{supp}} = \texttt{T}_{\texttt{supp}} \setminus \{j\}$  \;
        }
    }
    \Return target partition set $\mathcal{T}_s$\;
}
\end{algorithm}

\begin{remark}
    The Algorithm \ref{alg:recursive-iteration}  generates all possible supports of \texttt{SN} elements from index set \texttt{SR} that satisfy outer if condition presented in Algorithm~\ref{alg:recursive-iteration}. If outer if-condition is satisfied for every possible support, Algorithm \ref{alg:recursive-iteration} is equivalent to brute-force enumeration. Specifically, there are ${|\texttt{SR}| \choose \texttt{SN}}$ possible support sets, and each support requires at most $O(\texttt{SN}^2)$ elementary algebraic operations in our recursion Algorithm \ref{alg:recursive-iteration}. Therefore, the total computational complexity is $O\big( \texttt{SN}^2 \cdot  {|\texttt{SR}| \choose \texttt{SN}}\big)$. The main difference between our recursion Algorithm \ref{alg:recursive-iteration} and brute-force enumeration lies in identifying whether index $j$ presented in Algorithm~\ref{alg:recursive-iteration} can be added in any possible support.
\end{remark}

\section{Tables in Section \ref{sec:numerical-discussion}} \label{app:cuts-table}

\noindent\textbf{Summary of pre-solving time}. We report the pre-solving time for selecting inclusive and exclusive cuts by SCG and SSR in Table \ref{tbl:pre-solving-time}. The first column indicates the employed methods. The second to fourth columns report the pre-solving time on synthetic datasets with dimension $d \in \{100,300,500\}$, respectively. Each cell contains a pair of format $(\leq t^{\texttt{ub}}_{\text{0.5}}, ~\leq t^{\texttt{ub}}_{3.5} )$. 
Here, given a fixed parameter setting/configuration and a fixed sequence of conducted ridge parameters $\{\gamma_i\}_{i \in I}$, $t^{\texttt{ub}}_{\text{0.5}}$ and $t^{\texttt{ub}}_{\text{3.5}}$ denote the upper bounds of pre-solving time among all distinct $\gamma$s for $\text{SNR} \in \{0.5,~3.5\}$, respectively, i.e., 
\begin{align*}
    t^{\texttt{ub}}_{\text{0.5}} := \max_{i \in I} ~ t^{\texttt{pre}}_{\star} (\gamma_i; d, \text{SNR} = 0.5) \quad \text{and} \quad t^{\texttt{ub}}_{\text{3.5}} := \max_{i \in I} ~ t^{\texttt{pre}}_{\star} (\gamma_i; d, \text{SNR} = 3.5) ~~
\end{align*}
with $\star \in \{\text{SCG, SSR}\}$ and $t^{\texttt{pre}}_{\star} (\gamma_i; d, \text{SNR})$ the pre-processing time for variable screening method $\star$ with dataset parameter configuration $(d, \text{SNR})$. The last two columns provide the pre-solving time on two real datasets: UJIndoorLoc and CNAE. Each cell contains the pair $(\leq t^{\texttt{ub}}_{\texttt{reduced}}, ~\leq t^{\texttt{ub}}_{\texttt{all}} )$, where $t^{\texttt{ub}}_{\texttt{reduced}}$ and $t^{\texttt{ub}}_{\texttt{all}}$ represents the upper bounds of pre-solving time among all distinct $\gamma$s on the reduced sample dataset and whole dataset, respectively. \\

\begin{table}[h!]
\centering
\caption{Pre-solving time for SCG and SSR(s)}
\label{tbl:pre-solving-time}
\vskip 0.15in
\begin{center}
\begin{small}
\begin{sc}
\begin{tabular}{l|c|c|c|c|c}
\toprule
Methods & $d = 100$ & $d = 300$ & $d = 500$ & UJIndoorLoc & CNAE \\
\midrule
SCG   &  ($\leq 0.21$, $\leq 0.24$) &  ($\leq 0.49$, $\leq 0.69$) & ($\leq 0.76$, $\leq 1.22$)& ($\leq 4.73$, $\leq 27.6$) & ($\leq 0.7$, $\leq 3.2$)\\
\midrule
SSR   & ($\leq 0.39$, $\leq 0.16$) &  ($\leq 1.17$, $\leq 0.72$) & ($\leq 2.76$, $\leq 1.54$) & ($\leq 5.12$, $\leq 28.4$)& ($\leq 2.1$, $\leq 4.3$) \\
\bottomrule
\end{tabular}
\end{sc}
\end{small}
\end{center}
\vskip -0.1in
\end{table}

\noindent \textbf{Notation used in cut characteristic tables.} Before reporting numerical results on cut characteristics (Table~\ref{tbl:cuts_100_0.5}, \ref{tbl:cuts_100_3.5}, \ref{tbl:cuts_300_0.5}, \ref{tbl:cuts_300_3.5}, \ref{tbl:cuts_500_0.5}, \ref{tbl:cuts_500_3.5}), let us first introduce notation that will be used in these tables. We use $n_{\texttt{inc}}$ and $n_{\texttt{exc}}$ to denote the average number of inclusive and exclusive cuts generated, respectively. The average number of binary variables involved in each inclusive and exclusive cut is represented as  $l_{\texttt{inc}}$ and $l_{\texttt{exc}}$. We use $(\texttt{NA}_{\texttt{inc}},\texttt{NA}_{\texttt{exc}})$ (a shorthand for ``None Added'') to denote the number of datasets where no inclusive cut or no exclusive cut is added.

\clearpage
\noindent\textbf{Synthetic datasets - $d = 100$}

\begin{table}[h!]
\centering
\caption{$d=100,~\text{SNR} = 0.5$}
\label{tbl:cuts_100_0.5}
\vskip 0.15in
\begin{center}
\begin{small}
\begin{sc}
\begin{tabular}{l|c|c|c|c}
\toprule
$\gamma$ & Methods & $(n_{\texttt{inc}},n_{\texttt{exc}})$ & $(l_{\texttt{inc}},l_{\texttt{exc}})$ & $(\texttt{na}_{\texttt{inc}},\texttt{na}_{\texttt{exc}})$ \\
\midrule
2.2    & \makecell{SCG \\ SSR}  &  \makecell{(3.9,~89) \\ (2.9,~71.9)}  & \makecell{(1.27,~1.19) \\ - } & \makecell{(1,~0) \\ (1,~0)} \\
\midrule
2   & \makecell{SCG \\ SSR}   & \makecell{(3, 89.5)\\(2.6, 70.7)} &  \makecell{(1.09, 1.21) \\ -} & \makecell{(1, 0) \\ (1, 0)} \\ 
\midrule
1.8 & \makecell{SCG \\ SSR} & \makecell{(3.3, 89.2) \\ (2.1, 58.3)} & \makecell{(1.4, 1.34) \\ -} & \makecell{(2, 0) \\ (4, 1)} \\
\midrule
1.6 & \makecell{SCG \\ SSR} & \makecell{(2.3, 89.1) \\ (1.9, 60.3)} & \makecell{(1.15, 1.32) \\ -} & \makecell{(0, 0) \\ (1, 0)} \\
\midrule
1.4 & \makecell{SCG \\ SSR} & \makecell{(1.8, 89.2) \\ (1, 52.9)} & \makecell{(1.28, 1.4) \\ -} & \makecell{(4, 0) \\ (4, 0)} \\

\bottomrule
\end{tabular}
\end{sc}
\end{small}
\end{center}
\vskip -0.1in
\end{table}

\begin{table}[h!]
\centering
\caption{$d= 100,~\text{SNR} = 3.5$}
\label{tbl:cuts_100_3.5}
\vskip 0.15in
\begin{center}
\begin{small}
\begin{sc}
\begin{tabular}{l|c|c|c|c}
\toprule
$\gamma$ & Methods & $(n_{\texttt{inc}},n_{\texttt{exc}})$ & $(l_{\texttt{inc}},l_{\texttt{exc}})$ & $(\texttt{na}_{\texttt{inc}},\texttt{na}_{\texttt{exc}})$ \\
\midrule
2.2    & \makecell{SCG \\ SSR}  &  \makecell{(5.1,~86.5) \\ (4.8,~78.9)}  & \makecell{(1.08,~1.08) \\ - } & \makecell{(0,~0) \\ (0,~0)} \\
\midrule
2   & \makecell{SCG \\ SSR}   & \makecell{(4.9, 88.7)\\(3.7,~ 77.5)} &  \makecell{(1.25, 1.16) \\ -} & \makecell{(0, 0) \\ (0, 0)} \\ 
\midrule
1.8 & \makecell{SCG \\ SSR} & \makecell{(5, 87.3) \\ (2.8, 73.8)} & \makecell{(1.35, 1.15) \\ -} & \makecell{(0, 0) \\ (0, 0)} \\
\midrule
1.6 & \makecell{SCG \\ SSR} & \makecell{(4.5, 88.5) \\ (3.1, 70.3)} & \makecell{(1.26, 1.2) \\ -} & \makecell{(0, 0) \\ (1, 0)} \\
\midrule
1.4 & \makecell{SCG \\ SSR} & \makecell{(3.6, 88.7) \\ (3.2, 67.7)} & \makecell{(1.13, 1.24) \\ -} & \makecell{(0, 0) \\ (1, 0)} \\

\bottomrule
\end{tabular}
\end{sc}
\end{small}
\end{center}
\vskip -0.1in
\end{table}

\vspace{1cm}

\clearpage
\noindent\textbf{Synthetic datasets - $d = 300$}

\vspace{1cm}

\begin{table}[h!]
\centering
\caption{$d = 300,~\text{SNR} = 0.5$}
\label{tbl:cuts_300_0.5}
\vskip 0.15in
\begin{center}
\begin{small}
\begin{sc}
\begin{tabular}{l|c|c|c|c}
\toprule
$\gamma$ & Methods & $(n_{\texttt{inc}},n_{\texttt{exc}})$ & $(l_{\texttt{inc}},l_{\texttt{exc}})$ & $(\texttt{na}_{\texttt{inc}},\texttt{na}_{\texttt{exc}})$ \\
\midrule
2.2    & \makecell{SCG \\ SSR}  &  \makecell{(2.6,~290) \\ (0.9,~228)}  & \makecell{(1.55,~1.21) \\ - } & \makecell{(2,~0) \\ (4,~0)} \\
\midrule
2   & \makecell{SCG \\ SSR}   & \makecell{(1.1, 290)\\(0.6, 217.1)} &  \makecell{(1.5, 1.25) \\ -} & \makecell{(4, 0) \\ (6, 0)} \\ 
\midrule
1.8 & \makecell{SCG \\ SSR} & \makecell{(1.6, 290) \\ (0.6, 218)} & \makecell{(1.52, 1.25) \\ -} & \makecell{(3, 0) \\ (5, 0)} \\
\midrule
1.6 & \makecell{SCG \\ SSR} & \makecell{(0.7, 290) \\ (0.2, 138.4)} & \makecell{(1.7, 1.52) \\ -} & \makecell{(5, 0) \\ (8, 2)} \\
\midrule
1.4 & \makecell{SCG \\ SSR} & \makecell{(0.6, 290) \\ (0.2, 140.5)} & \makecell{(1.67, 1.52) \\ -} & \makecell{(7, 0) \\ (9, 1)} \\

\bottomrule
\end{tabular}
\end{sc}
\end{small}
\end{center}
\vskip -0.1in
\end{table}

\vspace{1cm}

\begin{table}[h!]
\centering
\caption{$d = 300,~\text{SNR} = 3.5$}
\label{tbl:cuts_300_3.5}
\vskip 0.15in
\begin{center}
\begin{small}
\begin{sc}
\begin{tabular}{l|c|c|c|c}
\toprule
$\gamma$ & Methods & $(n_{\texttt{inc}},n_{\texttt{exc}})$ & $(l_{\texttt{inc}},l_{\texttt{exc}})$ & $(\texttt{na}_{\texttt{inc}},\texttt{na}_{\texttt{exc}})$ \\
\midrule
2.2    & \makecell{SCG \\ SSR}  &  \makecell{(2.3,~290) \\ (1.9,~255.7)}  & \makecell{(1.15,~1.12) \\ - } & \makecell{(0,~0) \\ (1,~0)} \\
\midrule
2   & \makecell{SCG \\ SSR}   & \makecell{(3.2, 290)\\(1.5,~ 258.6)} &  \makecell{(1.48, 1.11) \\ -} & \makecell{(0, 0) \\ (2, 0)} \\ 
\midrule
1.8 & \makecell{SCG \\ SSR} & \makecell{(3.5, 290) \\ (2, 232.7)} & \makecell{(1.36, 1.20) \\ -} & \makecell{(1, 0) \\ (1, 0)} \\
\midrule
1.6 & \makecell{SCG \\ SSR} & \makecell{(2.9, 290) \\ (2.1, 225.6)} & \makecell{(1.31, 1.22) \\ -} & \makecell{(0, 0) \\ (2, 0)} \\
\midrule
1.4 & \makecell{SCG \\ SSR} & \makecell{(1.2, 290) \\ (0.5, 177.9)} & \makecell{(1.52, 1.39) \\ -} & \makecell{(3, 0) \\ (6, 2)} \\

\bottomrule
\end{tabular}
\end{sc}
\end{small}
\end{center}
\vskip -0.1in
\end{table}

\clearpage
\noindent\textbf{Synthetic datasets - $d = 500$}

\vspace{1cm}

\begin{table}[h!]
\centering
\caption{$d = 500,~\text{SNR} = 0.5$}
\label{tbl:cuts_500_0.5}
\vskip 0.15in
\begin{center}
\begin{small}
\begin{sc}
\begin{tabular}{l|c|c|c|c}
\toprule
$\gamma$ & Methods & $(n_{\texttt{inc}},n_{\texttt{exc}})$ & $(l_{\texttt{inc}},l_{\texttt{exc}})$ & $(\texttt{na}_{\texttt{inc}},\texttt{na}_{\texttt{exc}})$ \\
\midrule
2.2    & \makecell{SCG \\ SSR}  &  \makecell{(1.1,~490) \\ (1.1,~418.2)}  & \makecell{(1,~1.15) \\ - } & \makecell{(5,~0) \\ (5,~0)} \\
\midrule
2   & \makecell{SCG \\ SSR}   & \makecell{(1.6, 490)\\(0.6, 390.3)} &  \makecell{(1.38, 1.2) \\ -} & \makecell{(4, 0) \\ (5, 0)} \\ 
\midrule
1.8 & \makecell{SCG \\ SSR} & \makecell{(1, 490) \\ (0.1, 288.1)} & \makecell{(1.92, 1.41) \\ -} & \makecell{(7, 0) \\ (9, 1)} \\
\midrule
1.6 & \makecell{SCG \\ SSR} & \makecell{(0.3, 490) \\ (0.2, 245.9)} & \makecell{(1.33, 1.5) \\ -} & \makecell{(7, 0) \\ (8, 2)} \\
\midrule
1.4 & \makecell{SCG \\ SSR} & \makecell{(0.3, 490) \\ (0, 127.5)} & \makecell{(2, 1.74) \\ -} & \makecell{(8, 0) \\ (10, 6)} \\

\bottomrule
\end{tabular}
\end{sc}
\end{small}
\end{center}
\vskip -0.1in
\end{table}

\vspace{1cm}

\begin{table}[h!]
\centering
\caption{$d = 500,~\text{SNR} = 3.5$}
\label{tbl:cuts_500_3.5}
\vskip 0.15in
\begin{center}
\begin{small}
\begin{sc}
\begin{tabular}{l|c|c|c|c}
\toprule
$\gamma$ & Methods & $(n_{\texttt{inc}},n_{\texttt{exc}})$ & $(l_{\texttt{inc}},l_{\texttt{exc}})$ & $(\texttt{na}_{\texttt{inc}},\texttt{na}_{\texttt{exc}})$ \\
\midrule
2.2    & \makecell{SCG \\ SSR}  &  \makecell{(2,~490) \\ (1.3,~440.1)}  & \makecell{(1.28,~1.1) \\ - } & \makecell{(1,~0) \\ (2,~0)} \\
\midrule
2   & \makecell{SCG \\ SSR}   & \makecell{(2.1, 489.9)\\(1.5,~ 441.9)} &  \makecell{(1.32, 1.1) \\ -} & \makecell{(2, 0) \\ (3, 0)} \\ 
\midrule
1.8 & \makecell{SCG \\ SSR} & \makecell{(2.3, 490) \\ (1.4, 383.3)} & \makecell{(1.34, 1.22) \\ -} & \makecell{(2, 0) \\ (3, 1)} \\
\midrule
1.6 & \makecell{SCG \\ SSR} & \makecell{(1.2, 490) \\ (0.7, 378.1)} & \makecell{(1.4, 1.23) \\ -} & \makecell{(3, 0) \\ (5, 0)} \\
\midrule
1.4 & \makecell{SCG \\ SSR} & \makecell{(1, 490) \\ (0.3, 230.7)} & \makecell{(1.67, 1.53) \\ -} & \makecell{(4, 0) \\ (8, 3)} \\

\bottomrule
\end{tabular}
\end{sc}
\end{small}
\end{center}
\vskip -0.1in
\end{table}

\clearpage

\noindent\textbf{Real dataset - UJIndoorLoc}

\vspace{1cm}

\begin{table}[h!]
\centering
\caption{UJIndoorLoc}
\label{tbl:UJIndoor}
\vskip 0.15in
\begin{center}
\begin{small}
\begin{sc}
\begin{tabular}{l|c|c|c|c|c}
\toprule
$\gamma$ & Methods & $(n_{\texttt{inc}}^{\texttt{all}},n_{\texttt{exc}}^{\texttt{all}})$ & $(l_{\texttt{inc}}^{\texttt{all}},l_{\texttt{exc}}^{\texttt{all}})$ & $(n_{\texttt{inc}}^{\texttt{red}},n_{\texttt{exc}}^{\texttt{red}})$ & $(l_{\texttt{inc}}^{\texttt{red}},l_{\texttt{exc}}^{\texttt{red}})$\\
\midrule
14000    & \makecell{SCG \\ SSR}  &  \makecell{(0,~510) \\ (0, 439)}  & \makecell{(\texttt{NA},~1.14) \\ - } &  \makecell{(0,~510) \\ (0, 459)}  & \makecell{(\texttt{NA},~1.1) \\ - } \\
\midrule
13000   & \makecell{SCG \\ SSR}   & \makecell{(0, 510)\\(0, 439)} &  \makecell{(\texttt{NA}, 1.14) \\ -} &  \makecell{(0,~510) \\ (0, 69)}  & \makecell{(\texttt{NA},~1.86) \\ - } \\ 
\midrule
12000 & \makecell{SCG \\ SSR} & \makecell{(0, 510) \\ (0, 429)} & \makecell{(\texttt{NA}, 1.15) \\ -}  &  \makecell{(0,~510) \\ (0, 51)}  & \makecell{(\texttt{NA},~1.9) \\ - }\\
\midrule
11000& \makecell{SCG \\ SSR} & \makecell{(0, 510) \\ (0, 98)} & \makecell{(\texttt{NA}, 1.80) \\ -} &  \makecell{(0,~510) \\ (0, 456)}  & \makecell{(\texttt{NA},~1.10) \\ - } \\
\midrule
10000 & \makecell{SCG \\ SSR} & \makecell{(0, 510) \\ (0, 83)} & \makecell{(\texttt{NA}, 1.83) \\ -} &  \makecell{(0,~510) \\ (0, 459)}  & \makecell{(\texttt{NA},~1.1) \\ - } \\
\midrule
9000 & \makecell{SCG \\ SSR} & \makecell{(0, 510) \\ (0, 358)} & \makecell{(\texttt{NA}, 1.29) \\ -} &  \makecell{(0,~510) \\ (0, 461)}  & \makecell{(\texttt{NA},~1.09) \\ - } \\
\midrule
8000 & \makecell{SCG \\ SSR} & \makecell{(0, 510) \\ (0, 359)} & \makecell{(\texttt{NA}, 1.29) \\ -} &  \makecell{(0,~510) \\ (0, 459)}  & \makecell{(\texttt{NA},~1.1) \\ - } \\
\midrule
7000 & \makecell{SCG \\ SSR} & \makecell{(0, 510) \\ (0, 114)} & \makecell{(\texttt{NA}, 1.77) \\ -} &  \makecell{(0,~510) \\ (0, 452)}  & \makecell{(\texttt{NA},~1.11) \\ - } \\
\midrule
6000 & \makecell{SCG \\ SSR} & \makecell{(0, 510) \\ (0, 65)} & \makecell{(\texttt{NA}, 1.87) \\ -} &  \makecell{(0,~510) \\ (0, 120)}  & \makecell{(\texttt{NA},~1.76) \\ - } \\
\midrule
5000 & \makecell{SCG \\ SSR} & \makecell{(0, 510) \\ (0, 0)} & \makecell{(\texttt{NA}, 2) \\ -} &  \makecell{(0,~510) \\ (0, 80)}  & \makecell{(\texttt{NA},~1.84) \\ - } \\
\midrule
4000 & \makecell{SCG \\ SSR} & \makecell{(0, 510) \\ (0, 0)} & \makecell{(\texttt{NA}, 2) \\ -} &  \makecell{(0,~510) \\ (0, 38)}  & \makecell{(\texttt{NA},~1.92) \\ - } \\
\midrule
3000 & \makecell{SCG \\ SSR} & \makecell{(0, 510) \\ (0, 0)} & \makecell{(\texttt{NA}, 2) \\ -} &  \makecell{(0,~510) \\ (0, 9)}  & \makecell{(\texttt{NA},~1.98) \\ - }\\

\bottomrule
\end{tabular}
\end{sc}
\end{small}
\end{center}
\vskip -0.1in
\end{table}

\clearpage

\noindent\textbf{Real dataset - CNAE}

\vspace{1cm}

\begin{table}[h!]
\centering
\caption{CNAE}
\label{tbl:CNAE}
\vskip 0.15in
\begin{center}
\begin{small}
\begin{sc}
\begin{tabular}{l|c|c|c|c|c}
\toprule
$\gamma$ & Methods & $(n_{\texttt{inc}}^{\texttt{all}},n_{\texttt{exc}}^{\texttt{all}})$ & $(l_{\texttt{inc}}^{\texttt{all}},l_{\texttt{exc}}^{\texttt{all}})$ & $(n_{\texttt{inc}}^{\texttt{red}},n_{\texttt{exc}}^{\texttt{red}})$ & $(l_{\texttt{inc}}^{\texttt{red}},l_{\texttt{exc}}^{\texttt{red}})$\\
\midrule
0.1    & \makecell{SCG \\ SSR}  &  \makecell{(3,~846) \\ (3, 831)}  & \makecell{(1,~1.01) \\ - } &  \makecell{(4,~846) \\ (4, 830)}  & \makecell{(1,~1.02) \\ - } \\
\midrule
0.09   & \makecell{SCG \\ SSR}   & \makecell{(4, 846)\\(2, 828)} &  \makecell{(1.5, 1.02) \\ -} &  \makecell{(4,~846) \\ (4, 821)}  & \makecell{(1,~1.03) \\ - } \\ 
\midrule
0.08 & \makecell{SCG \\ SSR} & \makecell{(4, 846) \\ (1, 780)} & \makecell{(1.75, 1.07) \\ -}  &  \makecell{(4,~846) \\ (2, 796)}  & \makecell{(1.5,~1.06) \\ - }\\
\midrule
0.077 & \makecell{SCG \\ SSR} & \makecell{(3, 846) \\ (1, 699)} & \makecell{(1.67, 1.17) \\ -} &  \makecell{(4,~846) \\ (2, 792)}  & \makecell{(1.5,~1.06) \\ - } \\
\midrule
0.075 & \makecell{SCG \\ SSR} & \makecell{(2, 846) \\ (1, 0)} & \makecell{(1.5, 2) \\ -} &  \makecell{(4,~846) \\ (2, 771)}  & \makecell{(1.5,~1.09) \\ - } \\
\midrule
0.074 & \makecell{SCG \\ SSR} & \makecell{(2, 846) \\ (1, 0)} & \makecell{(1.5, 2) \\ -} &  \makecell{(4,~846) \\ (2, 754)}  & \makecell{(1.5,~1.1) \\ - } \\
\midrule
0.073 & \makecell{SCG \\ SSR} & \makecell{(2, 846) \\ (1, 0)} & \makecell{(1.5, 2) \\ -} &  \makecell{(3,~846) \\ (2, 753)}  & \makecell{(1.33,~1.1) \\ - } \\
\midrule
0.07 & \makecell{SCG \\ SSR} & \makecell{(1, 846) \\ (1, 0)} & \makecell{(1, 2) \\ -} &  \makecell{(3,~846) \\ (2, 692)}  & \makecell{(1.33,~1.18) \\ - } \\
\midrule
0.067 & \makecell{SCG \\ SSR} & \makecell{(1, 846) \\ (1, 0)} & \makecell{(1, 2) \\ -} &  \makecell{(3~846) \\ (2, 0)}  & \makecell{(1.33,~2) \\ - } \\
\midrule
0.063 & \makecell{SCG \\ SSR} & \makecell{(1, 846) \\ (1, 0)} & \makecell{(1, 2) \\ -} &  \makecell{(3,~846) \\ (2, 0)}  & \makecell{(1.33,~2) \\ - } \\
\midrule
0.06 & \makecell{SCG \\ SSR} & \makecell{(1, 846) \\ (1, 0)} & \makecell{(1, 2) \\ -} &  \makecell{(4,~846) \\ (1, 0)}  & \makecell{(1.75,~2) \\ - } \\
\midrule
0.055 & \makecell{SCG \\ SSR} & \makecell{(1, 846) \\ (1, 0)} & \makecell{(1, 2) \\ -} &  \makecell{(2,~846) \\ (1, 0)}  & \makecell{(1.5,~2) \\ - }\\
\midrule
0.05 & \makecell{SCG \\ SSR} & \makecell{(4, 846) \\ (0, 0)} & \makecell{(2, 2) \\ -} &  \makecell{(1,~846) \\ (1, 0)}  & \makecell{(1,~2) \\ - }\\
\midrule
0.04 & \makecell{SCG \\ SSR} & \makecell{(0, 846) \\ (0, 0)} & \makecell{(\texttt{NA}, 2) \\ -} &  \makecell{(0,~846) \\ (0, 0)}  & \makecell{(\texttt{NA},~2) \\ - }\\

\bottomrule
\end{tabular}
\end{sc}
\end{small}
\end{center}
\vskip -0.1in
\end{table}

\end{document}